\documentclass{amsart}

\usepackage[linktoc=page]{hyperref}

\usepackage[utf8]{inputenc}
\usepackage{cite}
\usepackage{amssymb}
\usepackage{amsthm}
\usepackage{amsfonts}
\usepackage{amsmath}
\usepackage{mathrsfs}
\usepackage[all]{xy}
\usepackage{graphicx}
\usepackage{mathrsfs}
\usepackage{extpfeil}
\usepackage{mathtools}
\usepackage{tikz-cd}
\usepackage{bm}
\usepackage{latexsym, amscd ,psfrag}
\usepackage{graphicx}
\usepackage{caption}
\usepackage{float}

\usepackage[mathscr]{euscript}
\usepackage{enumitem}
\usepackage{cleveref}

\makeatletter
\newsavebox{\@brx}
\newcommand{\llangle}[1][]{\savebox{\@brx}{\(\m@th{#1\langle}\)}%
  \mathopen{\copy\@brx\kern-0.5\wd\@brx\usebox{\@brx}}}
\newcommand{\rrangle}[1][]{\savebox{\@brx}{\(\m@th{#1\rangle}\)}%
  \mathclose{\copy\@brx\kern-0.5\wd\@brx\usebox{\@brx}}}
\makeatother
\def\dsum{\displaystyle\sum}
\def\dprod{\displaystyle\prod}
\def\dint{\displaystyle\int}

\newcommand{\ep}{\epsilon}

\newcommand{\Si}{\Sigma}
\newcommand{\Ga}{\Gamma}

\newcommand{\bC}{\mathbb{C}}
\newcommand{\CC}{\mathbb{C}}
\newcommand{\bE}{\mathbb{E}}
\newcommand{\bK}{\mathbb{K}}
\newcommand{\bL}{\mathbb{L}}

\newcommand{\bP}{\mathbb{P}}
\newcommand{\bQ}{\mathbb{Q}}
\newcommand{\bR}{\mathbb{R}}

\newcommand{\bT}{\mathbb{T}}
\newcommand{\TT}{\mathbb{T}}
\newcommand{\bZ}{\mathbb{Z}}
\newcommand{\ZZ}{\mathbb{Z}}

\newcommand{\cA}{\mathcal{A}}
\newcommand{\cB}{\mathcal{B}}

\newcommand{\cE}{\mathcal{E}}
\newcommand{\cF}{\mathcal{F}}

\newcommand{\cL}{\mathcal{L}}
\newcommand{\cI}{\mathcal{I}}
\newcommand{\cM}{\mathcal{M}}
\newcommand{\cO}{\mathcal{O}}

\newcommand{\cS}{\mathcal{S}}
\newcommand{\cQ}{\mathcal{Q}}

\newcommand{\cX}{\mathcal{X}}

\newcommand{\age}{\mathrm{age}}

\newcommand{{\inv} }{\mathrm{inv}}
\newcommand{\ev}{\mathrm{ev}}
\newcommand{\Aut}{\mathrm{Aut}}

\newcommand{\Res}{\mathrm{Res}}
\newcommand{\rank}{\mathrm{rank}}
\newcommand{\val}{ {\mathrm{val}} }
\newcommand{\vir}{{\mathrm{vir}}}
\newcommand{\CR}{  {\mathrm{CR}}  }
\newcommand{\Jac}{ {\mathrm{Jac}} }

\newcommand{\one}{\mathbf{1}}

\newcommand{\bu}{\mathbf{u}}

\newcommand{\bp}{\mathbf{p}}
\newcommand{\bq}{\mathbf{q}}
\newcommand{\bt}{\mathbf{t}}
\newcommand{\w}{\mathbf{w}}
\newcommand{\bw}{\mathbf{w}}

\newcommand{\su}{\mathsf{u}}

\newcommand{\txi}{ {\widetilde{\xi}} }

\newcommand{\tX}{{\widetilde{X}}}

\newcommand{\BG}{\mathcal{B} G}

\newcommand{\nov}{\Lambda_{\mathrm{nov}}}

\newcommand\fh{\mathfrak{h}}

\newcommand{\OGW}{\cF(m,r),S^{1},T}
\newcommand{\CGW}{\cF(m,r),T}

\newcommand{\eff}{\text{eff}}
\newcommand{\ualpha}{\underline{\alpha}}
\newcommand{\ubeta}{\underline{\beta}}

\newtheorem{lma}{Lemma}[section]
\newtheorem{coro}[lma]{Corollary}
\newtheorem{defn}[lma]{Definition}
\newtheorem{prop}[lma]{Proposition}

\newtheorem{theorem}[lma]{Theorem}
\newtheorem{remark}[lma]{Remark}

\theoremstyle{definition}

\newtheorem{convention}[lma]{Convention}

\begin{document}

\title{All genus open mirror symmetry for footballs}

\author{Zhuoming Lan}
\address{Zhuoming Lan, Beijing International Center for Mathematical Research, Peking University, 5 Yiheyuan Road, Beijing 100871, China}
\email{lanzm26@pku.edu.cn}
\author{Jinghao Yu}
\address{Jinghao Yu, School of Mathematical Sciences, Peking University, Haidian District, Beijing 100871, China}
\email{yujinghao@math.pku.edu.cn}

\author{Zhengyu Zong}
\address{Zhengyu Zong, Department of Mathematical Sciences,
	Tsinghua University, Haidian District, Beijing 100084, China}
\email{zyzong@mail.tsinghua.edu.cn}

\maketitle

\begin{abstract}
We prove an all genus full descendant open mirror symmetry for footballs. The B-model is given by the Chekhov-Eynard-Orantin topological recursion on the mirror curve.
\end{abstract}

\tableofcontents

\section{Introduction}

\subsection{Historical background and motivation}

\subsubsection{Mirror symmetry for orbifolds in the closed string sector}
Mirror symmetry is a duality from string theory originally discovered by physicists. It asserts that Type IIA and Type IIB string theories on distinct Calabi--Yau threefolds yield identical physical theories. This physical correspondence captured the attention of mathematicians in the early 1990s, particularly following the seminal work of Candelas, de la Ossa, Green, and Parkes \cite{CdGP}. By computing period integrals on the mirror manifold, they successfully formulated a striking conjecture for the enumeration of rational curves of all degrees on the quintic threefold.

By the late 1990s, a rigorous mathematical framework for the A-model topological closed string was established through the development of Gromov-Witten (GW) theory. Within this framework, generating functions of genus $g$ GW invariants serve to define the genus $g$ topological free energy. The mathematical aspect of genus zero closed mirror symmetry has been well-studied in many cases. Givental \cite{G96} and Lian-Liu-Yau \cite{LLY97} independently proved the genus zero mirror formula for the quintic Calabi-Yau 3-fold $Q$. Subsequent works by these authors generalized the correspondence to Calabi--Yau complete intersections within projective toric varieties \cite{G98, LLY99, LLY3}. 

The mathematical theory of A-model topological strings on orbifolds is given by orbifold GW theory \cite{AGV02,AGV08,CR02,CR04,Ts10}. The genus-zero mirror theorem for toric Deligne-Mumford stacks is proved in \cite{CCIT15, CCK}.

Extending mirror symmetry to higher genera presents substantially deeper challenges. In the case of toric Calabi-Yau 3-folds/3-orbifolds, Bouchard-Klemm-Mari\~{n}o-Pasquetti  (BKMP) \cite{BKMP09, BKMP10} proposed a new formalism of the topological B-model in terms of the Chekhov-Eynard-Orantin invariants of the mirror curve based on the work of Eynard-Orantin \cite{EO07} and Mari\~{n}o \cite{Ma}. BKMP conjectured a precise correspondence, known as the BKMP Remodeling Conjecture, between the local expansion of the Chekhov-Eynard-Orantin (CEO) invariants at the puncture of the mirror curve and the generating function of open Gromov-Witten invariants of toric Calabi-Yau 3-folds/3-orbifolds. The mathematical study of the Remodeling Conjecture can be found, for example, in \cite{BCMS,Ch09,EO15,FL,FLT,FLZ20a,FLZ20b,Zh09,Zh09b,Zh10,Zhu}. Extending this formalism to \emph{compact} target spaces introduces great complications. Bershadsky-Cecotti-Ooguri-Vafa (BCOV) conjectured the genus-one and genus-two mirror formulae for the quintic 3-fold \cite{BCOV}. Combining the techniques of BCOV, results of Yamaguchi-Yau \cite{YY04}, and boundary conditions, Huang-Klemm-Quackenbush \cite{HKQ} proposed a mirror conjecture on $F_g^Q$ up to $g=51$. The BCOV genus-one  mirror formula was first proved by A. Zinger in \cite{Zi09} using genus-one reduced Gromov-Witten theory, and later reproved in \cite{KimL, CFKim} via quasimap theory and in \cite{CGLZ} via MSP theory. The BCOV genus-two mirror formula was proved by Guo-Janda-Ruan \cite{GJR} and Chang-Guo-Li \cite{CGL}. The Yamaguchi--Yau's finite generation and the holomorphic anomaly equation for quintic 3-folds were proved in \cite{CGL} and \cite{GJR2}.

In contrast to Calabi--Yau threefolds, all-genus mirror symmetry for the projective line is more tractable. By connecting the Chekhov--Eynard--Orantin (CEO) topological recursion with the higher-genus closed GW invariants of $\bP^1$, Dunin-Barkowski, Orantin, Shadrin, and Spitz \cite{DOSS} successfully proved the Norbury--Scott conjecture \cite{NS}. This correspondence was later generalized to the equivariant setting in \cite{FLZ17}. The main result of \cite{FLZ17} relates the higher genus equivariant descendant Gromov-Witten potentials of $\bP^1$ to the oscillatory integrals of Chekhov-Eynard-Orantin invariants of the mirror curve. This structural identification can be viewed as an all-genus equivariant closed mirror theorem for $\bP^1$, which recovers the Norbury-Scott conjecture by taking the non-equivariant limit. The result of \cite{FLZ17} is generalized to the case of weighted projective line in \cite{Tang} and to the case of footballs in \cite{Lan25}.

\subsubsection{All genus open mirror symmetry for the projective line}
While the closed string sector of orbifolds has seen tremendous progress, the open string sector is much more complicated. Open Gromov-Witten theory studies the enumeration of maps from bordered Riemann surfaces to a target space with boundary conditions mapped to Lagrangian submanifolds. The all genus $S^1$-equivariant open Gromov-Witten theory of $(\bP^1,\bR\bP^1)$ is studied via coherent boundary conditions by Buryak-Netser Zernik-Pandharipande-Tessler in \cite{BNPT22}. The all genus open mirror symmetry for $(\bP^1,\bR\bP^1)$ is proved in \cite{YZ}, extending the result in \cite{FLZ17} to the open string sector. On the B-model side, instead of taking the oscillatory integrals, the expansion of the Chekhov-Eynard-Orantin invariants under certain local coordinate at the puncture of the mirror curve is considered.

For an orbifold target like the football $\mathcal{F}(m,r)$ (see Section \ref{sec:fan} for the definition), the geometry of the moduli space of open stable maps is enriched by the presence of stacky points. Computing these open orbifold invariants is a highly non-trivial task. Consequently, there is a strong motivation to find a recursive algorithmic structure to compute these all-genus open orbifold invariants effectively.

\subsubsection{All genus open mirror symmetry for the footballs}
The primary motivation of this paper is to bridge the gap between open orbifold Gromov-Witten theory and the topological B-model by generalizing the all-genus open mirror symmetry of $\mathbb{P}^1$ to the football $\mathcal{F}(m,r)$. In the previous work \cite{YZ}, the second and the third author showed that the local expansion of CEO invariants on the mirror curve exactly recovers the open GW invariants of the smooth projective line. It is a natural question whether the BKMP remodeling philosophy survives the introduction of stacky indices $m$ and $r$ on a compact target space.

In this paper, we answer this question in the affirmative. We construct the higher-genus equivariant open Gromov-Witten theory for the pair $(\mathcal{F}(m,r), S^1)$ via integration over the fixed locus, establishing the foundational virtual localization formulas and open/descendant correspondences. On the B-model side, we utilize the CEO topological recursion on the mirror curve of the football. Our main results demonstrate that the B-model open potentials precisely match the generating functions of the open GW invariants of $(\mathcal{F}(m,r), S^1)$ under the mirror map. 

Furthermore, we push this symmetry to its ultimate conclusion by establishing the \textit{full descendant open mirror symmetry}. By carefully applying oscillatory integrals over the SYZ dual cycles of equivariant line bundles, we prove that the topological recursion completely governs the open orbifold GW invariants with arbitrary descendant insertions. This result not only provides a powerful, effective algorithm for computing the higher-genus open invariants of $\mathcal{F}(m,r)$, but it also deepens our understanding of how mirror symmetry interacts with the descendant insertions and twisted sectors.

In the end, we would like to give a remark on our definition of the open GW invariants of $(\mathcal{F}(m,r), S^1)$. In \cite{BNPT22}, a graph sum formula for the $S^1$-equivariant open Gromov-Witten theory of $(\bP^1,\bR\bP^1)$ is introduced and this graph sum formula is conjectured to have a geometric definition via coherent boundary conditions. The vertex factor in this graph sum formula is given by the open Gromov-Witten invariants defined via integration over the fixed locus. In our paper, the open GW invariants of $(\mathcal{F}(m,r), S^1)$ are defined via integration over the fixed locus. We expect that there should be a geometric definition of the open GW invariants of $(\mathcal{F}(m,r), S^1)$ via coherent boundary conditions. Then the open GW invariants studied in our paper would play the role of the building block and the mirror theorem in our paper would imply the all genus open symmetry for the geometric open GW invariants. In the case of $\bP^1$, this is proved in \cite[Theorem 6.9]{YZ}.

\subsection{Main Results}
In this paper, we denote the 1-dimensional toric orbifold defined by toric fan with cones $-m$ and $r$ by $\cF(m,r)$.
Let $L\simeq S^{1}$ be the equator Lagrangian of $\cF(m,r)$. Let $T$ be the homotopy subtorus of $(\bC^{*})^{m+r+1}$ that fixes the $L$ (see Section \ref{sec:fan} for details).
The $T$-equivariant Chen-Ruan cohomology of $\cF(m,r)$ is:
$$H^{*}_{\CR,T}(\cF(m,r),\bC)=\bC[y_{1},y_{2},\bw]/(ry_{1}^{r}-my_{2}^{m}-\bp,y_{1}y_{2})$$
where $\bw=(\bw_{-m},\dots,\bw_{r})$ are equivariant parameters of $T$.

In Section \ref{sec:gen.fun.}, we will define the generating function $F_{g,h}(\bt,Q;X_{1},\dots, X_{h})$ of
genus $g$, $h$ boundary circles open Gromov-Witten invariants of $(\cF(m,r)
, S^{1}
)$. Here $\bt=tH+\dsum_{i=-m+1}^{r-1}t^{i}\one_{i}$ (see Section \ref{sec: cohomology})
, $Q$ is the Novikov variable encoding the degree of the stable maps to $\cF(m,r)$
, and $X_1,\dots, X_h$ are variables encoding the winding numbers (viewed as the
open string coordinates).

Our main theorem relates equivariant open Gromov-Witten invariants of $(\cF(m,r),S^{1})$
to the Eynard-Orantin invariants \cite{EO07} of
the mirror curve 
$$\{(x,Y)\in\bC\times\bC^{*}|x=Y^r + \sum_{l=-m}^{r-1} \tilde{q}_lY^l - \bw_r\log(Y^r) - \sum_{l=-m}^{r-1}\bw_l\log(\tilde{q}_lY^l)\}$$
where $\tilde{q}_{i}$
are complex parameters (see Section \ref{sec:W_T} and Section \ref{sec:curve}).

We study the Chekhov-Eynard-Orantin invariants $\omega_{g,n}$ of the
mirror curve. Then we use the full integral of $\omega_{g,n}$ to define the B-model potential $W_{g,n}(\bq,X_{1},\dots,X_{n})$ (see Section \ref{sec}).

 The statement of all genus open mirror symmetry is as follows.
\begin{theorem}[see also Theorem \ref{thm:main}]
Letting $X_{i}=e^{-\frac{W_{T}(Y_{i})}{\bp}}$, for $g \geq 0, n>0$, we have:
   \[
W_{g,n}(\bold{q},X_{1},\dots,X_{n})=F_{g,n} (\tau(\bold{q}),1;X_1,\dots,X_n)
\] 
where $\tau$ is the mirror map.
\end{theorem}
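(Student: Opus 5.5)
The plan is to compare both sides as graph sums, following the strategy of \cite{YZ} for $(\bP^1,\bR\bP^1)$ and the closed-string result for footballs in \cite{Lan25}.

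\textbf{A-model graph sum.} Apply virtual localization for the $S^1$-action, as set up in Section \ref{sec:gen.fun.}. This writes $F_{g,n}(\bt,Q;X_1,\dots,X_n)$ as a sum over decorated graphs:
\begin{itemize}
\item closed components are governed by the equivariant descendant closed theory of $\cF(m,r)$;
\item each boundary circle contributes a disk factor, whose winding number $d_j$ gets paired with $X_j^{d_j}$;
\item the disk attaches through a descendant insertion at the fixed point where the disk is centered.
\end{itemize}
Using the open/descendant correspondence, we then rewrite each disk factor as a closed descendant insertion $\sum_{a}\frac{\phi_a}{z-\psi}$, paired against an explicit hypergeometric disk function of $X_j$. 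The result is that $F_{g,n}$ becomes the closed descendant potential of $\cF(m,r)$ evaluated at special descendant insertions, one per boundary. At this point we use Givental--Teleman: the semisimple equivariant theory is expressed as a graph sum over the $R$-matrix and $S$-operator, with vertices given by Witten--Kontsevich (Hurwitz--Hodge at the orbifold points) intersection numbers.

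\textbf{B-model graph sum.} On the mirror curve, the CEO recursion is written in the Dunin-Barkowski--Orantin--Shadrin--Spitz form. This gives a graph sum whose ingredients are:
\begin{itemize}
\item the local expansions of $\omega_{0,2}$ at the ramification points of $x$, namely the critical points of $W_T$;
\item the same Witten--Kontsevich vertices;
\item leaves given by the Laplace transforms of the forms $d\xi_\alpha$.
\end{itemize}
From \cite{Lan25} (the closed all-genus mirror theorem for footballs) we take three identifications under the mirror map $\tau$: the $R$-matrix with the Laplace transform of the Bergman kernel; the ramification data with the canonical coordinates; and the oscillatory integrals of $d\xi_\alpha$ with the $S$-operator leaves. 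With these, the internal edges, vertices and closed-insertion leaves of the two graph sums agree term by term. The theorem therefore reduces to matching the open leaves. We integrate $\omega_{g,n}$ fully in each variable $Y_i$ and expand in $X_i=e^{-W_T(Y_i)/\bp}$. Each open leaf on the B-side is then the full integral $\int^{Y} d\xi_\alpha$ near the puncture, and this must equal the A-model leaf: the disk function composed with $S$, then contracted with the $R$-matrix.

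\textbf{Main obstacle: the open-leaf identity.} I expect this matching to be the hardest step. I would prove it by differentiating in $\log X$. This converts the integral into a Laplace-type expansion. I would then show that both sides satisfy the same quantum differential equation, the equivariant Picard--Fuchs system for $\cF(m,r)$. The two solutions should agree because they have the same leading asymptotics in $X\to 0$, namely the disk winding expansion at the puncture. That agreement in turn comes from the explicit equivariant disk potential of $\cF(m,r)$, with the twisted sectors $\one_i$ accounting for the $\tilde q_l Y^l$ terms. The unstable cases $(g,n)=(0,1),(0,2)$ fall outside the recursion and will be checked directly. For $(0,1)$ this uses $\int \log Y\,dx$ against the disk potential; for $(0,2)$ it uses the Bergman kernel with the annulus correction. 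Summing over graphs then gives the stated equality for all $g\ge0$, $n>0$.
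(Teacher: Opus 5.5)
Your overall architecture matches the paper's. That means: the open/descendant correspondence feeding the Givental graph sum, the DOSS graph sum on the B-side, and the closed identifications $R|_{Q=1}=\check R$ and $h^\alpha_1/\sqrt2=1/\sqrt{\Delta^\alpha(\bq)}$ from \cite{Lan25}. It also means reducing to the open leaves plus the unstable cases.

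The gap is the open-leaf step, where your uniqueness argument is not valid. The quantum differential (Picard--Fuchs) system is a system in the closed parameters $\bq$ (equivalently $\bt$), not in $X$. The coefficient of each $X^\mu$ is a separate function of $\bq$, essentially the $\cS$-operator at $z=\bp/\mu$ applied to a twisted class. So behaviour as $X\to 0$ is not initial data for that system. Moreover, the system has an $(m+r)$-dimensional solution space. The single scalar disk identity therefore cannot pin down the $\alpha$-indexed vector $U^{\hat\alpha}(z)=\sum_\beta\txi^\beta(z,X)(\hat\phi_\alpha(\bq),\cS(\phi_\beta)(z))|_{Q=1}$.

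What is missing is the mechanism that produces the $\alpha$-components from the scalar identity $\fh^\bullet_X(dY/Y)=\frac{1}{\bp}X\frac{d}{dX}F_{0,1}$ of Theorem \ref{thm:disk}: you have to differentiate it in the closed moduli.
\begin{enumerate}
\item On the A-side, $\partial_{u^{\hat\alpha}}F_{0,1}=[z^{-1}]U^{\hat\alpha}(z)$.
\item On the B-side, $\partial_{\tilde q_i}\fh^\bullet_X(dY/Y)=-\fh^\circ_X\big((Y^i-\bw_i/\tilde q_i)/\partial_yW_T\big)$.
\item The Frobenius isomorphism $QH^*_T(\cF(m,r))\cong\Jac(W_T)$, with the representative $P_\alpha^m\phi_\alpha(\bq)/Y^m$ (exponents in $[-m,r-1]$), turns $\sqrt{\Delta^\alpha(\bq)}\,\phi_\alpha(\bq)/\partial_yW_T$ into $\xi_{\alpha,0}/\sqrt2$ (Lemma \ref{lma:5.1}).
\item Only then does your $\log X$-derivative do its job. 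Since $\frac1\bp X\frac{d}{dX}=-\frac{d}{dx}$ raises the $z$-degree of $U^{\hat\alpha}$ by one, you get $[z^k]U^{\hat\alpha}=\fh^\circ_X\big((-\tfrac{d}{dx})^k(-\xi_{\alpha,0}/\sqrt2)\big)$, i.e.\ $(U^{\hat\alpha})_+=\fh^\bullet_X(-\hat\theta_\alpha(z)/\sqrt2)$.
\end{enumerate}
This is expressed through the forms $W^\alpha_k$, not the B-leaf forms $d\xi^\alpha_k$. You still need the identity $\theta_\alpha(z)=\sum_\beta\check R_\beta^{\ \alpha}(-z)\hat\theta_\beta(z)$ (Lemma \ref{lma:theta-R-matrix}) to absorb the factor $R(-z)$ in the A-model leaf. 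It must hold as an identity of meromorphic forms, so that $\fh^\bullet_X$ can be applied at the puncture $Y=0$. The thimble-integral identifications you cite from \cite{Lan25} are not enough by themselves.

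Your $(0,2)$ case is also not a direct check. The natural computation uses $C(Y_1,Y_2)=\frac12\sum_\alpha d\xi^\alpha_0(Y_1)d\xi^\alpha_0(Y_2)$ and the $k=0$ leaf identity. It only matches $\frac{1}{\bp}(X_1\partial_{X_1}+X_2\partial_{X_2})$ applied to both sides, so it cannot see any term $X_1^{\mu}X_2^{-\mu}$. The paper handles these terms in two steps:
\begin{enumerate}
\item It shows $\partial_{t^a}\big(F_{0,2}(\tau(\bq),1;X_1,X_2)-W_{0,2}(\bq,X_1,X_2)\big)=0$, using special geometry $\nabla_{\partial/\partial t^a}B=\int_{B_a}\omega_{0,3}$ together with the already matched $(0,3)$ graph sums.
\item It fixes the constant in the limit $\bq\to0$ after the rescaling $\mathrm{D}_\bq$. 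There both sides contain only terms with $\mu_1\mu_2>0$ (Lemma \ref{thm:annulus}).
\end{enumerate}
Some argument of this kind is needed.
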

We use the integral of last $n_{2}$ directions of $\omega_{g,n_{1}+n_{2}}$ to define the B-model potential $W_{g,(n_{1},n_{2})}$, and consider its Laplace transformations on the SYZ dual of a line bundle $\cL$ (see Section \ref{sec:full descendant MS} and \cite{Fang20}), we produce the open mirror symmetry for full descendants.
\begin{theorem}[See also Theorem \ref{thm:full descendant MS}]
    For $g, n_1,n_2\geq 0, n_1+n_{2}>0$, and $\cL_{i}\in K_{T}(\cF(m,r))$ for $i = 1,\dots, n_1$, we have
    \begin{eqnarray*}        
        &\dint\dots\dint_{y_{i}\in\mathrm{SYZ}(\cL_{i})} \exp(\dsum_{i=1}^{n_{1}}\frac{W_{T}(y_{i})}{z_{i}}) W_{g,(n_{1},n_{2})}(y_{1},\dots,y_{n_{1}},X_{n_{1}+1},\dots,X_{n_{1}+n_{2}}) \\
    &=\dsum_{\vec{\mu}\in(\bZ_{\neq0})^{n_{2}}}\left<\!\left< \dfrac{\kappa(\mathcal{L}_1)}{z_1-\psi_1}, \cdots,\dfrac{\kappa(\mathcal{L}_{n_{1}})}{z_{n_{1}}-\psi_{n_{1}}}  \right>\!\right>_{g,\vec{\mu}}^{\OGW}\dprod_{i=1}^{n_{2}}X_{i+n_{1}}^{\mu_{i}}.
    \end{eqnarray*}
where $\kappa(\cL_{i})$ is the equivariant $K$-theoretic framing of $\cL_{i}$ (see Section \ref{sec:full descendant MS}).

For $n_{1}=0$ this theorem reduces to Theorem \ref{thm:main} and for $n_{2}=0$ the theorem reduces to closed full descendant mirror symmetry proved in \cite[Section 4.2]{Lan25}.
\end{theorem}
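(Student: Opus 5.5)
The plan is to reduce the full descendant statement to two inputs: the open mirror symmetry of Theorem \ref{thm:main} for the boundary directions, and the closed full descendant mirror symmetry of \cite[Section 4.2]{Lan25} for the interior directions. Both are glued by a graph-sum description of $\omega_{g,n}$ on the B-side and of the mixed open/closed invariants on the A-side.

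\textbf{A-model side.} I will use virtual localization for the $S^1$-action to write $\langle\!\langle \prod_i \kappa(\cL_i)/(z_i-\psi_i)\rangle\!\rangle^{\OGW}_{g,\vec\mu}$ as a sum over decorated graphs $\Gamma$. The vertices carry Hodge integrals at the two orbifold fixed points. The edges carry propagators built from the $R$-matrix of the semisimple quantum cohomology $H^*_{\CR,T}(\cF(m,r))$. Each of the $n_2$ boundary leaves carries a disk factor: the equivariant disk invariant with winding $\mu_i$, attached at the fixed point bounding $L$. Each of the $n_1$ interior leaves carries the descendant leg $\kappa(\cL_i)/(z_i-\psi_i)$, evaluated through the $S$-operator.

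\textbf{B-model side.} I will use the Dunin-Barkowski--Orantin--Shadrin--Spitz identification, already used for Theorem \ref{thm:main} and in \cite{Lan25}. It expresses $\omega_{g,n_1+n_2}$ as the same graph sum, with vertex and edge data matching the A-model via the mirror map $\tau$. The leaves are given by differentials $d\xi_\alpha$ built from Laplace transforms at the ramification points. Integrating the last $n_2$ variables to produce $W_{g,(n_1,n_2)}$ acts only on the corresponding leaves. Expanding in $X_j=e^{-W_T(Y_j)/\bp}$, the proof of Theorem \ref{thm:main} shows these integrated leaves equal the A-model disk leaf factors. For the first $n_1$ variables, the integral $\int_{\mathrm{SYZ}(\cL_i)} e^{W_T(y_i)/z_i}(\cdot)$ acts only on the $i$-th leaf. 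By the closed computation in \cite{Lan25}, its Laplace transform against $d\xi_\alpha$ yields the descendant leg with insertion $\kappa(\cL_i)$, using the equivariant $K$-theoretic framing and Fang's identification of oscillatory integrals over SYZ cycles with the $\hat\Gamma$-integral structure \cite{Fang20}.

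\textbf{Main obstacle.} Since both sides are graph sums over the same graphs and the operations are leafwise, the identity follows termwise. I expect the hard point to be justifying that the mixed object $W_{g,(n_1,n_2)}$ can be integrated leafwise at all. Two things must be checked. First, that integrating $n_2$ directions and then taking oscillatory integrals in the others commutes with the graph expansion, including the unstable cases $(g,n)=(0,1),(0,2)$, where $\omega_{0,2}$ and the disk/annulus terms need separate treatment and regularization of the $\log$ terms. Second, that the SYZ integrals converge and pair correctly with the integrated $\omega_{0,2}$ between an interior and a boundary leaf. I would first treat these unstable terms by direct computation, matching the mixed disk-descendant two-point function with the $S$-operator composed with the disk factor. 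The stable graph-sum comparison then follows as above.
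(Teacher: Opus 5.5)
Your proposal is essentially the paper's proof: the paper writes $W_{g,(n_1,n_2)}$ as closed descendant correlators contracted with B-model leaf forms (via the A/B graph-sum identification used for Theorem \ref{thm:main}), converts the $n_2$ boundary leaves into $\txi^{\beta}_{k}(X)$ by Lemma \ref{lma:5.3}, and performs each SYZ integral leafwise using $S_{\widehat{\underline{\beta}}}^{\ \kappa(\cL)}(z)=-z^{k+1}\int_{\mathrm{SYZ}(\cL)}e^{W_T/z}\,W^\beta_k/\sqrt{2}$ and $\sum_\alpha S_\beta^{\ \widehat{\underline{\alpha}}}(-z)S_{\widehat{\underline{\alpha}}}^{\ \kappa(\cL)}(z)=(\phi_\beta(0),\kappa(\cL))$; it then concludes with the open/descendant correspondence, Theorem \ref{thm:open-descendant}. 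Two small corrections are needed: the A-model graph sum that matches DOSS term by term is the Givental--Teleman sum (vertices indexed by the $m+r$ canonical idempotents carrying pure $\psi$-integrals, $R$-matrix edges), not Hodge-integral vertices at the two fixed points, since localization enters only through Theorem \ref{thm:open-descendant}; and the unstable mixed cases you worry about are not needed, because the paper works with $2g-2+n_1+n_2>0$, where $W_{g,(n_1,n_2)}$ is defined and the graph sum is finite, so leafwise integration is immediate.
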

\subsection*{Acknowledgements}
The authors would like to thank Bohan Fang, Chiu-Chu Melissa Liu, and Song Yu for useful discussions. The third author is partially supported by the Natural Science Foundation of Beijing, China grant No. 1252008 and NSFC (grant No. 12571067).

\section{Equivariant closed Gromov-Witten theory of $\cF(m,r)$}\label{sec:closedGW}
In this section, we recap the geometry and closed Gromov-Witten invariants of $\cF(m,r)$, which was introduced in detail in \cite{Lan25}. 
\subsection{Geometry of $\mathcal{F}(m,r)$ as a toric orbifold}\label{sec:fan}

\color{black}
A football $\mathcal{F}(m,r)$ is a toric orbifold defined by the following stacky fan \cite{BCS05}: 
\begin{figure}[h]
\begin{center}
\setlength{\unitlength}{2mm}
\begin{picture}(20,5)
\put(12,4){\vector(-1,0){10}}
\put(12,4){\vector(1,0){6}}
\put(12,3){${}^|$}
\put(1,1){$-m$}
\put(17,1){$r$}
\end{picture}
  \label{fig0}
\end{center}
\end{figure}
\vspace{-5mm}

\noindent i.e., $\mathcal{F}(m,r)=(\mathbb{C}^2\setminus\{(0,0)\})/G_{m,r}$, where $G_{m,r}$ is defined by $$G_{m,r}=\{(t_1,t_2)|(t_1,t_2)\in(\mathbb{C}^{*})^2,t_1^{r} t_2^{-m}=1\}.$$ From this construction we have a $T=(\mathbb{C}^\ast)^2$ action on $\mathcal{F}(m,r)$ by $(t_1,t_2)\cdot[z_1,z_2]=[t_1z_1,t_2z_2]$.

The football $\cF(m,r)$ can also be described
in terms of an extended stacky fan \cite{Jiang08}. By this construction, $\cF(m,r)$ can be written as a quotient of connected torus group:
\begin{equation*}
    \cF(m,r)=[U_{\cA}/K]
\end{equation*}
with $$U_{\cA}=\{(t_{-m},\dots,t_{r})|(t_{-m},t_{r})\in\bC^{2}-\{0\},(t_{-m+1},\dots,t_{r-1})\in(\bC^{*})^{m+r-1}\},$$
$$K=\{(t_{-m},\dots,t_{r})\in (\bC^*)^{m+r+1}|\prod_{i=-m}^{r}t_{i}^{i}=1\}.$$
More details are explained in \cite{Lan25}.
\subsection{Equivariant cohomology 
of $\mathcal{F}(m,r)$}\label{sec: cohomology}

Let $\bT=(\bC^{*})^{m+r+1}$ be the dense torus of $U_{A}$, $\bT\to\cQ:=\bT/K$ be the quotient map. Let $\bullet$ denote a point. The surjective group homomorphism $\TT\to \cQ$ induces an injective ring homomorphism
\begin{eqnarray*}
H^*_{\cQ}(\bullet;\CC) = \CC[p] &\longrightarrow&  
H^*_{\TT}(\bullet;\CC) =\mathbb{C}[\bold{w}_{-m},\cdots,\w_{-1},\w_{0},\w_1,\cdots,\w_{r}] =: \CC[\w]\\
p &\longmapsto & \mathbf{p} 
= \sum_{\ell=1}^{r} \ell \bold{w}_\ell - \sum_{\ell=1}^m \ell \bold{w}_{-\ell} =
\bold{p}_+ -\bold{p}_-
\end{eqnarray*}
where $\mathbf{p}_{+} =\sum_{\ell=1}^{r} \ell \bold{w}_\ell$ and 
$\bold{p}_{-}=\sum_{\ell=1}^m \ell \bold{w}_{-\ell}$.

The $\cQ$-equivariant and $\TT$-equivariant cohomology rings
of $\cF(m,r)$ are given by (see e.g. \cite[Section 4.3]{CIJ}): 
\begin{align*}
    H^{*}_{\cQ}(\cF(m,r),\CC)&=\CC[u_1,u_2, p]/\langle u_1 u_2, p-(ru_1-mu_2) \rangle ,\\
    H^{*}_{\TT}(\cF(m,r),\CC)&=
    H^*_{\cQ}(\cF(m,r);\CC)
    \otimes_{\CC[p]} \CC[\w] =
    \CC[u_1, u_2, \mathbf{w}]/\langle u_1 u_2, \bold{p}-(ru_1-mu_2) \rangle.
\end{align*}

The $\cQ$-equivariant and $\TT$-equivariant
Chen-Ruan orbifold
cohomology rings of 
$\cF(m,r)$ are given by (see e.g. \cite[Section 8.8]{Liu13}):
\begin{align*}
    H^{*}_{\CR,\cQ}(\cF(m,r);\CC)&=\CC[y_1,y_2, p]/\langle y_1 y_2, p-(ry_1^{r}-my_2^{m}) \rangle, \\
    H^{*}_{\CR,\TT}(\cF(m,r);\CC)&=
    H^*_{\CR,\cQ}(\cF(m,r);\CC)\otimes_{\CC[p]}\CC[\w]=
    \CC[y_1, y_2, \mathbf{w}]/\langle y_1 y_2, \bold{p}-(ry_1^{r}-my_2^{m}) \rangle.
\end{align*}

Let 
$$   
    L := \{[z_{1},z_{2}]\in\cF(m,r): |z_{1}|=|z_{2}|=1\}
$$
be the Lagrangian submanifold of $\cF(m,r)$. 
Consider the subset of $T\subset\bT$ that fixes $L$. According to the construction above, we can also explicitly define:
\begin{equation*}
    T=\{(t_{-m},\dots,t_{r})|\dprod_{i=-m}^{r}|t_{i}^{i}|=1\}.
\end{equation*}

The $T$-equivariant Chen-Ruan orbifold cohomology ring of $\cF(m,r)$ is isomorphic to the $\bT$-equivariant counterpart:
\[
    H^*_{\CR,T}(\cF(m,r);\bC) = \bC[y_{1},y_{2},\w]/\langle (y_{1}y_{2},\bp-(ry_{1}^{r}-my_{2}^{m}))\rangle.
\]
Let $p_r$ be the $r$-orbifold point $[0,1]$ and $p_{-m}$ be the $m$-orbifold point $[1,0]$. Both of them are $T$-fixed points. Let $G_{-m}, G_r$ be the monodromy groups of $p_{-m}, p_r$  ($G_{-m}=\bZ_m, G_r=\bZ_r$) respectively. Let $H=ry_{1}^{r}-\bp_{+}=my_{2}^{m}-\bp_{-}$.

Let $\cI\cF(m,r)$ be the inertia stack of $\cF(m,r)$. We have
\[
 \cI\cF(m,r) = \bigsqcup_{v\in\bZ\atop{-m < v < r}} \cF_v
\]
where 
\begin{equation*}
    \cF_v = \left\{\begin{aligned}
        &\cB\bZ_r, \quad &&1 \leq v\leq r-1,
        \\
        &\cF(m,r), \quad &&v=0,
        \\
        & \cB\bZ_m, \quad && -m+1\leq v \leq -1.
    \end{aligned}\right.
\end{equation*}
For convenience, we define $\text{Box}(\Si) = \{v\in\bZ:-m<v<r\}$, where $\Si$ is the stacky fan of $\cF(m,r)$.
As a graded vector space, the Chen-Ruan orbifold cohomology rings of $\cF(m,r)$ is identified by
\[
    \begin{split}
        H^*_{\CR,T}(\cF(m,r);\bC) &= H^*(\cF(m,r))\oplus\bigoplus_{i=1}^{r-1} H^*(\cF_i)[\frac{2i}{r}]\oplus \bigoplus_{j=1}^{m-1}H^*(\cF_{-j})[\frac{2j}{m}]
        \\
        &= \bC\one_0\oplus \bC H \oplus \bigoplus_{i=1}^{r-1}\bC\one_{i}\oplus\bigoplus_{j=1}^{m-1}\bC\one_{-j},
    \end{split}
\]
where $\one_i = y_1^i$ for $1\leq i\leq r-1$, and $\one_{-j}= y_2^j$ for $1\leq j\leq m-1$. 
Let $(,)_{\cF(m,r),T}$ be the $T$-equivariant Chen-Ruan orbifold Poincar\'{e} pairing of $H^*_{\CR,T}(\cF(m,r);\bC)$.

\subsection{Equivariant closed Gromov-Witten invariants of $\cF(m,r)$}
Let $E(\cF(m,r))$ denote the effective curve classes in $H_2(\cF(m,r);\bZ)$.
Given nonnegative integers $g, n$ and an effective curve class $\beta\in E(\cF(m,r))$,
let $\overline{\cM}_{g,n}(\cF(m,r),\beta)$ be the moduli stack of genus $g$, $n$-pointed, degree $\beta$ stable maps to $\cF(m,r)$.
Let $\ev_i: \overline{\cM}_{g,n}(\cF(m,r),\beta)\rightarrow \cI\cF(m,r)$ be the evaluation map at the $i$-th marked point. The $T$-action on $\cF(m,r)$ induces a $T$-action 
on $\overline{\cM}_{g,n}(\cF(m,r),\beta)$ and the evaluation map $\ev_i$ is $T$-equivariant.

For $i=1,\dots,n$, let $\bL_i$ be the $i$-th tautological line bundle over $\overline{\cM}_{g,n}(\cF(m,r),\beta)$ formed by the cotangent line at the $i$-th marked point.
Define the $i$-th descendant class $\psi_i$ as 
$$
    \psi_i := c_1(\bL_i)\in H^2(\overline{\cM}_{g,n}(\cF(m,r),\beta);\bQ).
$$
We choose an $T$-equivariant lift $\psi_i^{T}\in H^2_{T}(\overline{\cM}_{g,n}(\cF(m,r),\beta);\bQ)$ of $\psi_i$.

Given $\gamma_1,\dots,\gamma_n\in H^*_{\CR,T}(\cF(m,r);\bC)$ and nonnegative integers $a_1,\dots, a_n$, we define genus-$g$, degree-$\beta$, $T$-equivariant descendant 
Gromov-Witten invariants of $\cF(m,r)$:
\begin{align*}
    \langle \tau_{a_1}(\gamma_1),\dots,\tau_{a_n}(\gamma_n)\rangle_{g,n,\beta}^{\cF(m,r),T} &= 
    \langle \gamma_1\psi^{a_1},\dots,\gamma_n\psi^{a_n}\rangle_{g,n,\beta}^{\cF(m,r),T}
    \\
    & := \int_{[\overline{\cM}_{g,n}(\cF(m,r),\beta)]^{\vir,T}}
    \prod_{i=1}^n \ev_i^*(\gamma_i)(\psi_i^{T})^{a_i} \in \bC[{\bf w}],
\end{align*}
where $[\overline{\cM}_{g,n}(\cF(m,r),\beta)]^{\vir,T}$ is the weighted virtual fundamental class of $\overline{\cM}_{g,n}(\cF(m,r),\beta)$ \cite{AGV08,AGV02}.
Define the Novikov ring 
\[
    \nov := \widehat{\bC[E(\cF(m,r))]} = \Bigg\{\sum_{\beta\in E(\cF(m,r))}c_\beta Q^\beta: c_\beta\in\bC\Bigg\},
\]
where $Q$ is the Novikov variable. 
Let $\bt = tH+\dsum_{i=-m+1}^{r-1}t^{i}\bold{1}_{i}\in H^*_{\CR,T}(\cF(m,r);\bC)$, we define the following double correlator:
\[
    \llangle\gamma_1\psi^{a_1},\dots,\gamma_n\psi^{a_n}\rrangle_{g,n}^{\cF(m,r),T} := 
    \sum_{\beta\in E(\cF(m,r))}\sum_{l=0}^\infty \frac{Q^\beta}{l!}\langle \tau_{a_1}(\gamma_1),\dots,\tau_{a_n}(\gamma_n)
    ,\bt^l\rangle_{g,n+l,\beta}^{\cF(m,r),T}.
\]

For $j=1,\dots,n$, introduce formal variables
\[
    {\bf{u}}_j = {\bf{u}}_j(z) = \sum_{a\geq 0}(u_j)_az^a
\]
where $(u_j)_a\in H^*_{\CR,T}(\cF(m,r))$. Define
\begin{align*}
    \llangle {\bf u}_1,\dots, {\bf u}_n \rrangle_{g,n}^{\cF(m,r),T} &=  \llangle {\bf u}_1(\psi),\dots, {\bf u}_n(\psi)\rrangle_{g,n}^{\cF(m,r),T}
    \\
    &= \sum_{a_1,\dots,a_n\geq 0}\llangle (u_1)_{a_1}\psi^{a_1},\dots, (u_n)_{a_n}\psi^{a_n}\rrangle_{g,n}^{\cF(m,r),T}.
\end{align*}

Let $z_1,\dots,z_n$ be formal variables and $\gamma_1,\dots,\gamma_n\in H^*_{\CR,T}(\cF(m,r))$. Define
\[
    \llangle \frac{\gamma_1}{z_1-\psi},\dots,\frac{\gamma_n}{z_n-\psi}\rrangle_{g,n}^{\cF(m,r),T} = 
    \sum_{a_1,\dots,a_n\in\bZ_{\geq 0}}\llangle\gamma_1\psi^{a_1},\dots,\gamma_n\psi^{a_n}\rrangle_{g,n}^{\cF(m,r),T}\prod_{i=1}^{n}z_i^{-a_i-1}.
\]
We use the conventions that
\[
    \langle\frac{\gamma}{z-\psi}\rangle_{0,1,0}^{\cF(m,r),T} := z(\one, \gamma)_{\cF(m,r),T},
\]
\[
    \langle\frac{\gamma_1}{z-\psi},\gamma_2\rangle_{0,2,0}^{\cF(m,r),T} := (\gamma_1,\gamma_2)_{\cF(m,r),T},
\]
\[
    \langle\frac{\gamma_1}{z_1-\psi_1},\frac{\gamma_2}{z_2-\psi_2}\rangle_{0,2,0}^{\cF(m,r),T} := \frac{1}{z_1+z_2}(\gamma_1,\gamma_2)_{\cF(m,r),T}.
\]

\subsection{Equivariant quantum cohomology and Frobenius structure}
This section follows the notations and results in \cite{Lan25}.

The $T$-equivariant quantum cohomology ring $QH_T^*(\cF(m,r))$ of $\cF(m,r)$ is defined by its genus-zero primary Gromov-Witten invariants. As a $\bC[\bp]$-module, 
$QH_T^*(\cF(m,r)) = H^*_{\CR,T}(\cF(m,r))$. The ring structure is given by the quantum product $\star$:
\[
    (\gamma_1\star\gamma_2,\gamma_3)_{\cF(m,r),T} = \llangle \gamma_1,\gamma_2,\gamma_3\rrangle_{0,3}^{\cF(m,r),T}.
\]

Let $q_{i}$ and $q_{*}$ be as defined in \cite[Section 2.7]{Lan25}, the $T$-equivariant quantum cohomology ring of $\cF(m,r)$ is:
\[
    QH^*_T(\cF(m,r);\bC) = \bC[X,X^{-1},\bp,q]/\langle \dsum_{i=-m}^{r}i\tilde{q}_{i}X^{i}-\bp\rangle,
\]
where $\tilde{q}_r=1$, $\tilde{q}_{i}=q_{i}$ for $0\leq i\leq r-1$, $\tilde{q}_{i}=q_{*}^{-i}q_{i}$ for $-m+1\leq i<0$ and $\tilde{q}_{-m}=q_*^m$.

Let $\{z_{\alpha}\}$ be the roots of $\dsum_{i=-m}^{r}i\tilde{q}_{i}X^{i}-\bp$ for $\alpha=0,1\dots, m+r-1$, and let
\[
    \phi_{\alpha}(\bq) = \dprod_{\beta\neq\alpha}\frac{X-z_{\beta}}{z_{\alpha}-z_{\beta}}\in QH_{T}^{*}(\cF(m,r),\bC),
\]
Then 
\[
    \phi_\alpha(\bq)\star \phi_\beta(\bq) = \delta_{\alpha\beta}\phi_\alpha(\bq), \quad
    (\phi_\alpha(\bq), \phi_\beta(\bq))_{\cF(m,r),T} = \frac{\delta_{\alpha\beta}}{\Delta^\alpha(\bq)},
\]
where
\[
    \Delta^\alpha(\bq) = \frac{r\prod_{\beta\neq\alpha}(z_\alpha-z_\beta)}{z_\alpha^{m-1}}.
\]

The normalized canonical basis is defined by
\[
    \{ \hat{\phi}_\alpha(\bq) := \sqrt{\Delta^\alpha(\bq)}\phi_\alpha(\bq) : \alpha = 0,1\dots,m+r-1\}.
\]
They satisfy
\[
    \hat{\phi}_\alpha(\bq) \star \hat{\phi}_\beta(\bq) = \delta_{\alpha\beta}\sqrt{\Delta^\alpha(\bq)}\hat{\phi}_\alpha(\bq), \quad (\hat{\phi}_\alpha(\bq),\hat{\phi}_\beta(\bq))_{\cF(m,r),T} = \delta_{\alpha\beta}.
\]

\subsection{$\Psi$-matrix}\label{sec:A-canonical}
This section follows the notation and results in \cite{Lan25}. Consider the flat coordinate $\bt=\dsum_{i=0}^{m+r-1}t^{i}(\bold{q})X^{i}$, where $X=\one_{1}$ as an element in quantum cohomology ring, and canonical coordinate $\bu=\dsum_{\alpha=0}^{m+r-1}u^{\alpha}\phi_{\alpha}(\bq)$. In this subsection we calculate $\Psi$-matrix under the flat coordinates and canonical coordinates chosen in the previous subsection. 

By direct calculation, we have: 
\begin{eqnarray*}
\big(\frac{\partial}{\partial\bold{t}}\big)^{T}=&V(z_{0},z_{1},\dots,z_{m+r-1})\big(\frac{\partial}{\partial\bold{u}}\big)^{T}\\
\big({d\bold{t}}\big)^{T}=&V^{-1}(z_{0},z_{1},\dots,z_{m+r-1})^{T}\big({d\bold{u}}\big)^{T},
\end{eqnarray*}
where $V(z_{0},z_{1},\dots,z_{m+r-1})$ is the Vandermonde matrix with parameters $(z_{0},\dots,z_{m+r-1})$.
The above equations determine the canonical coordinates
$\bold{u}$ up to a constant in $\CC[\w]$.

We write $(\frac{\partial}{\partial\bold{t}})^{T}=V(z_{0},z_{1},\dots,z_{m+r-1})(\frac{\partial}{\partial\bold{u}})^{T}$. Then we have $$({d\bold{t}})^{T}=V^{-1}(z_{0},z_{1},\dots,z_{m+r-1})^{T}({d\bold{u}})^{T}.$$

For $\alpha\in \{0,1,\dots,m+r-1\}$ and $i\in \{0,1,\dots,m+r-1\}$, define $\Psi_i^{\,\ \alpha}$ by
$$
\frac{du^\alpha}{\sqrt{\Delta^\alpha(\bq)}} =\sum_{i=0}^{m+r-1} dt^i \Psi_i^{\,\ \alpha},
$$
and define the $\Psi$-matrix to be
$$
\Psi:= \left(\Psi_i^{\,\ \alpha} \right)_{i,\alpha}
$$
with row index $i$ and column index  $\alpha$. We can directly find out: 
$$\Psi=V(z_{0},z_{1},\dots,z_{m+r-1})\mathbf{diag}((\Delta^{0}(\bq))^{-\frac{1}{2}},
\dots,(\Delta^{m+r-1}(\bq))^{-\frac{1}{2}}).$$
This also explicitly gives  $\Psi_i^{\,\ \alpha}=z_{\alpha}^{i}(\Delta^{\alpha}(\bq))^{-\frac{1}{2}}$.

Let $\Psi^{-1}$ be the inverse matrix of $\Psi$, so we have:
$$
\Psi^{-1}=\mathbf{diag}((\Delta^{0}(\bq))^{\frac{1}{2}},
\dots,(\Delta^{m+r-1}(\bq))^{\frac{1}{2}})V^{-1}(z_{0},z_{1},\dots,z_{m+r-1}).
$$
We can also explicitly write:
$$(\Psi^{-1})^{\,\,i}_{\alpha}:=(\Psi^{-1})_{\alpha,i}=(-1)^{m+r-1-i}(\frac{r}{z_{\alpha}^{m-1}})(\Delta^{\alpha}(\bq))^{-\frac{1}{2}}\dsum_{J\subset S\setminus\{\alpha\},|J|=m+r-i-1}z_{J}$$
where $S=\{0,\dots,m+r-1\}$, $z_{J}=\dprod_{\alpha\in J}z_{\alpha}$.
\subsection{The $\cS$-operator}\label{sec:A-S}
In this section we set the notations of $\cS$-operator and $S$-matrice under different basis. 

For any cohomology classes $a,b\in H_{\CR,T}^*(\cF(m,r);\CC)$, we define: 
\begin{align*}
(a,\cS(b)(z))&=\llangle a,\frac{b}{z-\psi}\rrangle^{\cF(m,r),T}_{0,2},
\\
 V_{z_1,z_2}(a,b)&= \llangle\frac{a}{z_1-\psi_1},\frac{b}{z_2-\psi_2}\rrangle_{0,2}^{\cF(m,r),T}.
\end{align*}
We have the following identity:
\[
    V_{z_1,z_2}(a,b) = \frac{1}{z_1+z_2}\sum_{\alpha\in\{0,\dots,m+r-1\}}(a,\cS(\hat{\phi}_\alpha(\bq))(z_{1}))\cdot(b,\cS(\hat{\phi}_\alpha(\bq))(z_{2})).
\]
In the following context, we write $\cS(a)$ to represent $\cS(a)(z)$ for short. 

The $T$-equivariant $J$-function is characterized by
$$
(J^{T}(z),a)_{\cF(m,r),T} = (1,\cS(a)\big|_{Q=1})_{\cF(m,r),T}
$$
for any $a\in H_{\CR,T}^*(\cF(m,r))$.

We consider several different (flat) bases for $H_{\CR,T}^*(\cF(m,r))$:
\begin{enumerate}
\item The classical canonical basis: $\phi_\alpha:=\phi_{\alpha}(\bq)_{\bq\to0}$
\item The basis dual to the classical canonical basis with respect to the $T$-equivariant Poincar\'e pairing:
$\phi^\alpha =\Delta^{\alpha}(0)\phi_\alpha$.
\item The classical normalized canonical basis $\hat{\phi}_\alpha=\Delta^{\alpha}(0)^{\frac{1}{2}}\phi_\alpha$.
\item The quantum canonical basis $\phi_{\underline{\alpha}}:=\phi_{\alpha}(\bq)$.
\item The quantum normalized canonical basis $\phi_{\hat{\underline{\alpha}}}:= \hat{\phi}_\alpha(\bq)$.
\end{enumerate}

For $\alpha,\beta\in \{0,1,\dots,m+r-1\}$, define:
\begin{align*}
  S^{\ualpha}_{\ \ubeta}(z)&= (\phi^\alpha(\bq), \cS(\phi_\beta(\bq))),\quad
  S^{\hat\ualpha}_{\ \ubeta}(z) = (\hat\phi_\alpha(\bq), \cS(\phi_\beta(\bq))),\\
  S^{\ualpha}_{\ \hat\ubeta}(z) &= (\phi^\alpha(\bq), \cS(\hat\phi_\beta(\bq))), \quad
  S^{\hat\ualpha}_{\ \hat\ubeta}(z) = (\hat\phi_\alpha(\bq), \cS(\hat\phi_\beta(\bq))).
\end{align*}

Let $U=\mathbf{diag}(u^0,\dots, u^{m+r-1})$.
By Givental's theorem \cite{Giv98b}, there exists a unitary $R(z)$ (i.e. $R(z)R^T(-z)=I$) such that $S=\Psi R(z)e^{U/z}$ is a fundamental solution of the quantum differential equation, with $R(z) =\one + R_1z + R_2 z^2 + \dots$ a formal power series in $z$. Furthermore, $R(z)$ is unique up to 
a right multiplication of $\exp(a_1 z+a_3z^3+a_5z^5+\dots)$, where $a_i$ are complex diagonal matrices. More results about the A-model $R$-matrix of $\cF(m,r)$ could be found in \cite{Tang,Lan25}.

\subsection{Mirror theorem of $\cF(m,r)$}
We follow the results and notations in \cite{Lan25} on the equivariant $I$-function.

We introduce a basis of $H^*_{\CR,T}(\cF(m,r))$,
\begin{equation*}
    \begin{aligned}
\one_{h,r}=\one_{h} , \ 1\leq h\leq r-1&, \quad \one_{0,r}=(H+\bp_{+})/\bp,\\
\one_{-h,-m}=\one_{-h}, \ 1\leq h\leq m-1&
,\quad \one_{0,-m}=-(H+\bp_{-})/\bp.
    \end{aligned}
\end{equation*}

Let $\bold{q}^{d}=q_{*}^{d_{*}}\dprod_{i=-m+1}^{r-1}q_{i}^{d_{i}}$, $\deg q_{*}=\frac{m+r}{mr}$, $\deg q_{i}=\frac{r-i}{r}$ for $i\geq 0$, and $\deg q_{i}=\frac{m+i}{m}$ for $i<0$. Let $\tilde{d}_{-m}=\frac{1}{m}(d_{*}-\dsum_{i=1}^{m-1}id_{-i})$, $\tilde{d}_{r}=\frac{1}{r}(d_{*}-\dsum_{i=1}^{r-1}id_{i})$.
In this convention, let $$\deg d := \deg \bq^d = \tilde{d}_{-m} + \tilde{d}_r + \sum_{i=-m+1}^{r-1} d_i.$$

Let $\bK_\eff$ be the $K$-effective class of $\cF(m,r)$.
By \cite{Lan25}, $\bK_{\eff}=\bK_{\eff,r}\cup\bK_{\eff,-m}$ is as following:
\begin{equation*}
    \begin{aligned}
        \bK_{\eff,r}&=\{(d_{-m+1},\dots,d_{r-1},d_{*}):d_{i}, d_*\in\bZ_{\geq0},\tilde{d}_{-m}\in \bZ_{\geq0}\},\\
        \bK_{\eff,-m}&=\{(d_{-m+1},\dots,d_{r-1},d_{*}):d_{i},d_*\in\bZ_{\geq0},\tilde{d}_{r}\in \bZ_{\geq0}\}.
    \end{aligned}
\end{equation*}

The $T$-equivariant $I$-function is as following:
\begin{equation*}
  \begin{aligned}
    I^T(\bq,z) = e^{(-\sum_{a\in I_0}{\bf w}_a\log q_a)/z}\Big(
      \sum_{d\in \bK_{\eff,r}}e^{\frac{-{\bp}_-\log q_*}{z}}\bq^dz^{-\lceil\deg d\rceil}\cdot \frac{\Ga(\frac{\bp}{rz}+1-\{-\tilde{d}_r\})}{\tilde{d}_{-m}!\prod_{i\in I_0}d_i!\Ga(\frac{\bp}{rz}+\tilde{d}_r+1)}\one_{v(d),r}
    \\ +
\sum_{d\in \bK_{\eff,-m}}e^{\frac{-{\bp}_+\log q_*}{z}}\bq^dz^{-\lceil\deg d\rceil}\cdot \frac{\Ga(\frac{-\bp}{mz}+1-\{-\tilde{d}_{-m}\})}{\tilde{d}_{r}!\prod_{i\in I_0}d_i!\Ga(\frac{-\bp}{mz}+\tilde{d}_{-m}+1)}\one_{v(d),-m}
    \Big),
  \end{aligned}
\end{equation*}
where $I_0= [-m+1,r-1]\cap \bZ$ and 
$v(d) = -m\{-\tilde{d}_{-m}\}+r\{-\tilde{d}_r\}$.

We write
\begin{equation}\label{eqn:I-function}
      I^T(\bq,z) = \sum_{0\leq h\leq r-1} I_{+,h}(\bq,z)\one_{h,r} + \sum_{-m+1\leq h\leq 0} I_{-,h}(\bq,z)\one_{h,-m}.
\end{equation}
Then we have
\[
  I_{+,h}(\bq,z) = e^{(-\sum_{a\in I_0}{\bf w}_a\log q_a)/z}\sum_{d\in \bK_{\eff,r}\atop{r\{-\tilde{d}_r\}=h}}e^{\frac{-\bp_{-}\log q_*}{z}}\bq^dz^{-\lceil\deg d\rceil}\cdot \frac{\Ga(\frac{\bp}{rz}+1-\{-\tilde{d}_r\})}{\tilde{d}_{-m}!\prod_{i\in I_0}d_i!\Ga(\frac{\bp}{rz}+\tilde{d}_r+1)},
\]
\[
  I_{-,h}(\bq,z) = e^{(-\sum_{a\in I_0}{\bf w}_a\log q_a)/z}\sum_{d\in \bK_{\eff,-m}\atop{\{-\tilde{d}_{-m}\}=-\frac{h}{m}}}e^{\frac{-\bp_{+}\log q_*}{z}}\bq^dz^{-\lceil\deg d\rceil}\cdot \frac{\Ga(\frac{-\bp}{mz}+1-\{-\tilde{d}_{-m}\})}{\tilde{d}_{r}!\prod_{i\in I_0}d_i!\Ga(\frac{-\bp}{mz}+\tilde{d}_{-m}+1)}.
\]
By the equivariant mirror theorem \cite{CCIT15,CCK}, we have:
\begin{equation*}
    I^{T}(\bq,z)=J^{T}(\tau(\bq),z)
\end{equation*}
where $\tau$ is the equivariant mirror map defined in \cite{CCIT15,CCK}. The explicit result of equivariant mirror map for $\cF(m,r)$ is calculated in \cite{Lan25}.

\subsection{The graph sum formula for descendant Gromov-Witten potential}\label{sec:Givental-graph}
In this subsection, we introduce the graph sum formula for the generating functions of descendant
Gromov-Witten invariants. We introduce a labeled graph $\vec{\Ga}=(\Ga,g,\beta,k)$ as follows. Here $\Ga$ is a connected graph equipped with the following data: 
\begin{enumerate}
    \item $V(\Gamma)$ is the set of vertices in $\Gamma$.
    \item $H(\Gamma)$ is the set of half-edges in $\Gamma$. $H(\Ga)$ is equipped with a vertex assignment $v: H(\Ga)\rightarrow V(\Ga)$ and an involution $\iota$.
    \item $E(\Gamma)$ is the set of edges in $\Gamma$, which is defined by the 2-cycles of $\iota$ in $H(\Ga)$.
    \item $L(\Gamma)$ is the set of leaves in $\Gamma$, which is defined by the fixed points of $\iota$.
    It admits a disjoint splitting $L(\Ga)=L^o(\Ga)\sqcup L^1(\Ga)$, 
    where $L^o(\Ga)=\{l_1,\dots,l_n\}$ is the set of ordinary leaves in $\Ga$, with $n=|L^o(\Ga)|$, and
    $L^1(\Ga)$ is the set of dilaton leaves in $\Ga$. 
\end{enumerate}

With the above notations, we introduce the following labels:
\begin{itemize}
  \item Genus $g: V(\Gamma)\to \mathbb{Z}_{\geq 0}$;
  \item Marking $\beta: V(\Gamma)\to \{0,\cdots, m+r-1 \}$;
  \item Height $k: H(\Gamma)\to \mathbb{Z}_{\geq 0}$. Moreover, we require $k(l)\geq 2$ for every dilaton leaf $l\in L^1(\Ga)$.
\end{itemize}

An automorphism of the labeled graph $\vec{\Ga}=(\Ga,g,\beta,k)$ is an automorphism of the underlying graph $\Ga$
preserving all incidence relations and all decorations $(g,\beta,k)$. Moreover, it fixes each ordinary leaf $l_i\in L^o(\Ga)$. The dilaton leaves are 
unlabeled and may be permuted by automorphisms. We denote the automorphism group by $\Aut(\vec{\Ga})$.

Given an edge $e$, let $h_1(e)$ and $h_2(e)$ be the two half-edges associated to $e$. 
Note that the order of two vertices attached to an edge does not affect the graph sum formula in this paper.
Note that the marking on $V(\Gamma)$ induces a marking on $L(\Gamma)=L^o(\Gamma)\cup L^1(\Gamma)$ by $\beta(\ell)=\beta(v)$ where $\ell$ is attached to $v$.
Let $H(v)$ be the set of all half edges attached to $v$. Define the valence of $v\in V(\Gamma)$ as $\mathrm{val}(v)=|H(v)|$.
We say a labeled graph $\vec{\Gamma}=(\Gamma,g,\beta,k)$ is stable if $$2g(v)-2+\mathrm{val}(v)> 0,$$ for all $v\in V(\Gamma)$.

Let ${\bf\Gamma}(\cF(m,r))$ denote the set of all stable labeled graphs $\vec{\Gamma}=(\Gamma, g,\beta,k)$.
The genus of a stable labeled graph $\vec{\Gamma}$ is defined to be
\[
    g(\vec{\Gamma}) := \sum_{v\in V(\Ga)} g(v) + |E(\Gamma)| - |V(\Ga)| + 1
    = \sum_{v\in V(\Ga)}(g(v)-1)+ (\sum_{e\in E(\Ga)}1) + 1.
\]
Define 
\[
    {\bf \Ga}_{g,n}(\cF(m,r)) = \{\vec{\Ga}=(\Ga,g,\beta,k)\in {\bf\Ga}(\cF(m,r)) : 
    g(\vec{\Ga})=g, |L^o(\Ga)| = n\}.
\]

We assign weights to leaves, edges, and vertices of a labeled graph $\vec{\Gamma}\in {\bf \Ga}(\cF(m,r))$ as follows.
\begin{enumerate}
    \item \emph{Ordinary leaves}. To each ordinary leaf $l_j \in L^o(\Gamma)$ with $\beta(l_j)=\beta\in\{0,\dots,m+r-1\}$
        and $k(l_j)=k\in\bZ_{\geq 0}$, we assign the following descendant weight:
        \[
            (\cL^{\bf u})^\beta_k(l_j) = [z^k](\sum_{\alpha,\gamma \in \{0,\dots,m+r-1\}}\left(
                \frac{{\bf u}^\alpha_j(z)}{\sqrt{\Delta^\alpha(q)}}S^{\hat{\underline{\gamma}}}_{\ \hat{\underline{\alpha}}}(z)
            \right)_+ R(-z)_\gamma^{\ \beta}).
        \]
    where $(\cdot)_+$ means taking the nonnegative powers of $z$.
    \item \emph{Dilaton leaves}. To each dilaton leaf $l\in L^1(\Ga)$ with $\beta(l)=\beta \in \{0,\dots,m+r-1\}$ and 
        $2\leq k(l)=k\in\bZ_{\geq 0}$, we assign
        \[
            (\cL^1)^\beta_k(l) = [z^{k-1}]\Big(-\sum_{\alpha\in \{0,\dots,m+r-1\}}\frac{1}{\sqrt{\Delta^\alpha(q)}}R_\alpha^{\ \beta}(-z)\Big).
        \] 
    \item \emph{Edges}. To an edge connecting a vertex marked by $\alpha\in\{0,\dots,m+r-1\}$ and a vertex marked by $\beta\in\{0,\dots,m+r-1\}$,
    and with heights $k$ and $l$ at the corresponding half-edges, we assign
        \[
            \cE^{\alpha,\beta}_{k,l} = [z^kw^l]\Big(
                \frac{1}{z+w}(\delta_{\alpha,\beta}-\sum_{\gamma\in \{0,\dots,m+r-1\}}R_\gamma^{\ \alpha}(-z)R_\gamma^{\ \beta}(-w))
            \Big).
        \]
    \item \emph{Vertices}. To a vertex $v$ with genus $g(v)=g\in \bZ_{\geq 0}$ and with marking $\beta(v)=\beta$,
        with $m=\val(v)$ half-edges attached to it with heights $k_1,\dots,k_m\in\bZ_{\geq 0}$, we assign
        \[
            \Big(\sqrt{\Delta^\beta(q)}\Big)^{2g(v)-2+\val(v)}\langle\tau_{k_1}\dots\tau_{k_{m}}\rangle_g,
        \]
        where $\langle\tau_{k_1}\dots\tau_{k_{m}}\rangle_g = \int_{\overline{\cM}_{g,m}}\psi_1^{k_1}\dots\psi_{m}^{k_{m}}$.
\end{enumerate}
We define the weight of a labeled graph $\vec{\Ga}\in {\bf\Ga}_{g,n}(\cF(m,r))$ to be 
\begin{equation*}
    \begin{aligned}
        \omega^{\bf u}_A(\vec{\Ga}) = &\prod_{v\in V(\Ga)}\Big(\sqrt{\Delta^{\beta(v)}(q)}\Big)^{2g(v)-2+\val(v)}\langle\prod_{h\in H(v)}\tau_{k(h)}\rangle_{g(v)}
        \prod_{e\in E(\Ga)}\cE^{\beta(v_1(e)),\beta(v_2(e))}_{k(h_1(e)),k(h_2(e))}
        \\
        & \cdot \prod_{l\in L^1(\Ga)}(\cL^1)^{\beta(l)}_{k(l)}(l)\prod_{j=1}^n(\cL^{\bf u})^{\beta(l_j)}_{k(l_j)}(l_j).
    \end{aligned}
\end{equation*}
With the above definition of the weight of a labeled graph, we have the following theorem which expresses the $T$-equivariant descendant
Gromov-Witten potential of $\cF(m,r)$ in terms of graph sum.
\begin{theorem}[\cite{Giv98b,Zong15}]\label{thm:descendant-graph-sum}
    Suppose that $2g-2+n > 0$. Then
    \[
        \llangle{\bf u}_1, \dots, \bu_n\rrangle_{g,n}^{\cF(m,r),T} = \sum_{\vec{\Ga}\in{\bf \Ga}_{g,n}(\cF(m,r))}
        \frac{\omega^\bu_A(\vec{\Ga})}{|\Aut(\vec{\Ga})|}.
    \]
\end{theorem}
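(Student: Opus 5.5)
This is Givental's formula for semisimple Frobenius manifolds, proved by Teleman's classification and written as a graph sum in \cite{Zong15}. My plan is to verify that $T$-equivariant orbifold GW theory of $\cF(m,r)$ meets the hypotheses, and then to match the combinatorial data. There are four steps.

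\textbf{Step 1: generic semisimplicity.} The quantum cohomology is
\[
QH^*_T(\cF(m,r))=\bC[X,X^{-1},\bp,q]/\langle \sum_{i=-m}^r i\tilde q_iX^i-\bp\rangle .
\]
For generic $\bq$ and $\bw$ this polynomial has $m+r$ distinct roots $z_\alpha$. Hence the idempotents $\phi_\alpha(\bq)$ exist and the Frobenius manifold is semisimple on an open dense set. Equivalently, I could use virtual localization: the $T$-fixed locus is isolated stacky points, and the classical limit at $\bq\to0$ gives the basis $\phi_\alpha$ with nonvanishing $\Delta^\alpha(0)$.

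\textbf{Step 2: Teleman's reconstruction.} For a semisimple cohomological field theory with flat unit, Teleman's theorem says the theory is obtained from $m+r$ copies of the trivial theory (the $\overline{\cM}_{g,n}$ intersection numbers $\langle\tau_{k_1}\cdots\tau_{k_m}\rangle_g$). These copies are weighted by $\Delta^\alpha$, acted on by the $R$-matrix, and then translated. The equivariant setting is the easier case here. The localization formula for orbifold GW invariants (in the spirit of \cite{Zong15}, where equivariant Givental formula is proved via localization for toric orbifolds with isolated fixed points and 1-dim orbits) identifies the $R$-matrix uniquely: the ambiguity $\exp(\sum a_{2k+1}z^{2k+1})$ is fixed by the degree-zero (classical) limit. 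That limit is given by the Hodge-integral asymptotics at each fixed point, namely the twisted theory of $\cB\bZ_r$ or $\cB\bZ_m$.

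\textbf{Step 3: unwinding into the graph sum.} Expanding $\hat R$, the translation and the $S$-operator, the operator formula for $\llangle \bu_1,\dots,\bu_n\rrangle_{g,n}$ becomes a Feynman sum.
\begin{itemize}
\item Edges: the propagator $\frac{1}{z+w}\bigl(\delta-R^T(-z)R(-w)\bigr)$ comes from the quadratic part of $\hat R$. It is well defined because of unitarity $R(z)R^T(-z)=I$.
\item Dilaton leaves: these come from the dilaton shift $-z\one$ transformed by $R(-z)$, written in the normalized basis. This produces the $1/\sqrt{\Delta^\alpha}$ factor and the condition $k\ge2$.
\item Ordinary leaves: these come from $\bu_j$ transformed by $S$ (taking the $(\cdot)_+$ part) followed by $R(-z)$.
\item Vertex factors: $(\sqrt{\Delta^\beta})^{2g-2+\val}$ is the standard normalization of the rank-one trivial theory in the basis $\hat\phi_\alpha$.
\end{itemize}
Automorphism factors come from Wick's theorem. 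The genus count follows by Euler characteristic.

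\textbf{Main obstacle.} The hard part is Step 2, specifically justifying the $R$-matrix identification in the orbifold setting. Teleman's theorem only determines $R$ up to the stated ambiguity. Fixing it requires the localization computation at the stacky fixed points, which I would cite from \cite{Zong15}. I would check that $\cF(m,r)$ fits that paper's hypotheses: it is a toric orbifold curve with two isolated fixed points. I would then use analyticity in $\bq$ to extend from the semisimple locus to formal power series.
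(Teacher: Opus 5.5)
Your outline is correct and matches what the paper does: the paper gives no argument of its own and simply invokes Givental's formula as established for GKM orbifolds via localization in \cite{Giv98b,Zong15}, with the $R$-matrix pinned down by the degree-zero Hurwitz--Hodge data at $p_r$ and $p_{-m}$. Two parts of your plan are unnecessary: the detour through Teleman's classification, since the localization argument in those references yields the graph sum directly, and the analytic continuation in $\bq$, since the equivariant theory is already semisimple at $\bq=0$ over the fraction field of $\bC[\bw]$ once the relevant roots are adjoined.
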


\section{Equivariant open Gromov-Witten theory of $(\cF(m,r),S^1)$}\label{sec:openGW}
\subsection{Moduli spaces of stable maps to ($\cF(m,r),S^{1}$)}

Then we have
\[
    H_2(\cF(m,r),S^{1}) = \bZ[D_1]\oplus \bZ[D_2].
\]
Sometimes, we identify the relative homology group $H_2(\cF(m,r),S^1)$ to $\bZ^2$ so that the generators $(1,0)$ and $(0,1)$ represent $D_1$ and $D_2$ 
respectively. 
The relative homology group $H_2(\cF(m,r),S^1)$ could also be viewed as the extension of the homology groups $H_2(\cF(m,r))$ and $H_1(S^1)$.
Consider the short exact sequence
\begin{equation}
    \begin{tikzcd}\label{eqn:relative-homology}
        0 \arrow[r] & H_2(\cF(m,r))= \bZ[\cF(m,r)] \arrow[r, "\delta"] & H_2(\cF(m,r),S^1) \arrow[r, "\partial"] & H_1(S^1)= \bZ[S^1] \arrow[r] & 0.
    \end{tikzcd}
\end{equation}
For $\beta=d[\cF(m,r)]\in H_2(\cF(m,r))$, the map $\delta:H_2(\cF(m,r))\rightarrow H_2(\cF(m,r),S^1)$ is defined by $\delta(d[\cF(m,r)])=(d,d)$.
The connecting map $\partial:H_2(\cF(m,r),S^1)\rightarrow H_1(S^1)$ is given by $\partial(a,b) = (b-a)[S^1]$. 

Let $(\Si,\partial\Si)$ be a prestable bordered Riemann surface, and let $\partial\Si=R_1\cup\dots\cup R_h$.
The (small) genus $g(\Si)$ of $\Si$ is the genus of the closed Riemann surface obtained by capping each boundary component of $\Si$ with a disk.
A genus-$g$ bordered Riemann surface with $h$ boundary components is called a Riemann surface of topological type $(g,h)$.

We are concerned with the maps $u:(\Si,\partial\Si)\rightarrow
(\cF(m,r),S^1)$.
The \emph{degree} of the map $u$ is the element 
\[
    \beta' = (d_-,d_+)= u_*[\Si]\in H_2(\cF(m,r),S^1).
\]
Let
\[
\mu_i[S^1]= (u\mid_{R_i})_*[R_i]\in H_1(S^1)=\bZ[S^1],\quad i=1,\cdots,h.
\]
The number $\mu_i$ is called the $i$-th winding number. Let $\beta'= u_*[\Si]\in H_2(\cF(m,r),S^1)$, $\vec{\mu} = (\mu_1,\dots,\mu_{h})\in\bZ_{\neq 0}^h$ and let $J_\pm = \{j\in \{1,\dots,h\}:\pm \mu_j > 0\}$.
Then there exists $d\in\bZ_{\geq 0}$ such that 
$$\beta' = d[\cF(m,r)] - \sum_{j\in J_-} \mu_j[D_1] + \sum_{j\in J_+}\mu_j[D_2].$$
By the short exact sequence \eqref{eqn:relative-homology}, we have
\[
        d_+-d_- = \sum_{j=1}^{h}\mu_j,
\]
\[
        d = d_+ - \sum_{j\in J_+}\mu_j = d_- +\sum_{j\in J_-}\mu_j.
\]

Let $(\Si,\partial\Si,x_1,\dots,x_n)$ be a prestable bordered Riemann surface with $n$ interior marked points,
and consider stable maps:
\[
    u: (\Si,\partial\Si,x_1,\dots,x_n)\rightarrow (\cF(m,r),S^1).
\]
Here, a stable map is a map whose automorphism group is finite.

Let $\overline{\cM}_{(g,h),n}(\cF(m,r),S^1 | \beta',\vec{\mu})$ be the moduli space of degree $\beta'$ stable maps to $(\cF(m,r),S^1)$ from type $(g,h)$ bordered Riemann surfaces with $n$ interior marked points, such that
the winding numbers are given by $\mu_i\in\bZ$.
\subsection{Equivariant open Gromov-Witten invariants}
Let $\gamma_1,\dots,\gamma_n\in H^*_{\CR,T}(\cF(m,r);\bC)$, $\beta'\in H_2(\cF(m,r),S^{1})$.
The $T$-action on $\cF(m,r)$ induces a $T$-action on the moduli space $\overline{\cM}_{(g,h), n}(\cF(m,r),S^{1}|\beta',\vec{\mu})$.
Let $F:=\overline{\cM}_{(g,h), n}(\cF(m,r),S^{1}|\beta',\vec{\mu})^T$ be the $T$-fixed locus. The evaluation map $\ev_i: \overline{\cM}_{(g,h), n}(\cF(m,r),S^{1}|\beta',\vec{\mu})\to \cI\cF(m,r)$ is $T$-equivariant.

For $i=1,\dots,n$, let $\bL_i$ be the $i$-th tautological line bundle over $\overline{\cM}_{(g,h), n}(\cF(m,r),S^{1}|\beta',\vec{\mu})$ formed
by the cotangent line at the $i$-th marked point. Define the $i$-th descendant class $\psi_i$ as
$$
\psi_i := c_1(\bL_i)\in H^2(\overline{\cM}_{(g,h), n}(\cF(m,r),S^{1}|\beta',\vec{\mu});\bQ).
$$
We choose a $T$-equivariant
lift $\psi_i^{T}\in H^2_{T}(\overline{\cM}_{(g,h), n}(\cF(m,r),S^{1}|\beta',\vec{\mu});\bQ)$
of $\psi_i$. 
We define the $T$-equivariant open Gromov-Witten invariants of $(\cF(m,r),S^{1})$ \cite{Liu20}:
\begin{equation}\label{eqn:openGW}
    \langle \tau_{a_1}(\gamma_1)\dots\tau_{a_n}(\gamma_n)\rangle^{\OGW}_{g,\beta',\vec{\mu}}
    := \int_{[F]^\vir}
    \frac{\prod_{i=1}^n (\psi_i^T)^{a_i}\ev_i^*(\gamma_i)|_F}{e_T(N^\vir)} \in \bC(\bw),
\end{equation}
where $[F]^\vir$ is the virtual fundamental class of $F$, and $N^\vir$ is the virtual normal bundle of $F$.  Since $F$ is a compact orbifold without boundary, the above integral is well-defined.

We also define the following double correlator:
\[
    \llangle \tau_{a_1}(\gamma_1),\dots,\tau_{a_n}(\gamma_n)\rrangle_{g,\vec{\mu}}^{\OGW} := 
    \sum_{\beta\in E(\cF(m,r))}\sum_{l=0}^\infty \frac{Q^\beta}{l!}\langle \tau_{a_1}(\gamma_1),\dots,\tau_{a_n}(\gamma_n)
    ,\bt^l\rangle_{g,\beta',\vec{\mu}}^{\OGW}.
\]

\begin{remark} \rm
    There is a definition of equivariant open Gromov-Witten invariants for 3-dimensional Calabi-Yau smooth toric Deligne-Mumford stacks in \cite[Section 3.6]{FLT}.
In \cite{FLT}, the virtual normal bundle $N^\vir$ is defined via the tangent-obstruction complex. Each connected component of the fixed locus $F$ is a compact orbifold without boundary and 
the virtual fundamental class $[F]^\vir = [F]$. The equivariant open Gromov-Witten invariants are defined by integrating the inverse of the Euler class of $N^\vir$ (denoted by $e_T(N^\vir)^{-1}$) over $[F]^\vir$ (see \cite[Equation (13)]{FLT}).
In the case of $(\cF(m,r),S^1)$, the equivariant open Gromov-Witten invariants \emph{via integration over the fixed locus} can be defined similarly. After the computation of $e_T(N^\vir)^{-1}$, we obtain the right hand side of the expression in Proposition \ref{prop:localization}.
\end{remark}

\subsection{Disk factor as relative GW invariants}
For $r\geq 1$, let $\bP^1[r]$ be the projective line with a single stacky point of order $r$ at $0$. Let $\mu\in\bZ_{>0}$ and let $\gamma=\bar\mu$ be the representative of $\mu$ in $\bZ_r$. Let $\overline{\cM}_{0,\gamma}(\bP^1[r],\mu)$ be the moduli space of relative map with relative condition $\mu$ over $\infty$. 


The standard $\bC^*$-action defined by $t\cdot [z_0,z_1] = [z_0,tz_1]$, lifts canonically to $\bC^*$-actions on $\bP^1[r]$ and $\overline{\cM}_{0,\gamma}(\bP^1[r],\mu)$. The tangent weights at $[0/\bZ_r]$, $\infty\in\bP^1[r]$ are $\bp/r$ and $-\bp$. The $\bC^*$-fixed locus of $\overline{\cM}_{0,\gamma}(\bP^1[r],\mu)$ is identified with 
$$
    \overline{\cM}_0^{\bC^*}\cong \overline{\cM}_{0,(\mu,-\mu)}(\cB \bZ_r)\times_{\bar{I}\cB \bZ_r} P,
$$
where $\bar{I}\cB \bZ_r$ is the rigidified inertia stack of $\cB \bZ_r$ and $P$ is the moduli stack of $\bC^*$-fixed Galois covers of degree $\mu$.
We define
$$
    \langle \bold{1}_{\mu,r}\rangle^{\OGW}_{0,\mu{[S^{1}]},\mu} := \int_{\overline{\cM}_0^{\bC^*}}\frac{1}{e(N^\vir)},
$$
where $e(N^\vir)$ is the equivariant Euler class of the virtual normal bundle. We cite \cite[Equation (16)]{JPT11} for details. 

Similarly, we define the disk invariants $ \langle \bold{1}_{\mu,-m}\rangle^{\OGW}_{0,\mu{[S^{1}]},\mu}$ for $\mu\in\bZ_{<0}$.

For $\mu<0$, we define $D^{1}(\mu)$ such that
$$
    \langle \one_{\mu,-m}\rangle^{\OGW}_{0,\mu{[S^{1}]},\mu}
    = D^1(-\mu)(\frac{\mu}{\bp})\cdot(\one_{\mu,-m},\one_{-\mu,-m})_{\cF(m,r),T},
$$
For $\mu>0$, we define $D^{2}(\mu)$ such that
$$
    \langle \bold{1}_{\mu,r}\rangle^{\OGW}_{0,\mu{[S^{1}]},\mu}
    = D^2(\mu)(\frac{\mu}{\bp})\cdot(\one_{\mu,r},\one_{-\mu,r})_{\cF(m,r),T}.
$$
We conclude that
\begin{equation*}\label{eqn:disk-factor}
    \begin{aligned}
        &D^1(\mu) = \Big(-\frac{m}{\bp}\Big)^{1-\delta_{0,\langle\frac{\mu}{m}\rangle}}(-1)^{\lfloor\frac{\mu}{m}\rfloor-1}\frac{\mu^{\lfloor\frac{\mu}{m}\rfloor-2}}{\lfloor\frac{\mu}{m}\rfloor!\bp^{\lfloor\frac{\mu}{m}\rfloor-2}},
        \\
        &D^2(\mu) = \Big(\frac{r}{\bp}\Big)^{1-\delta_{0,\langle\frac{\mu}{r}\rangle}}\frac{\mu^{\lfloor\frac{\mu}{r}\rfloor-2}}{\lfloor\frac{\mu}{r}\rfloor!\bp^{\lfloor\frac{\mu}{r}\rfloor-2}}.
    \end{aligned}
\end{equation*}

\subsection{Virtual localization formula}
\begin{defn}[Decorated graphs (a)]
    Let $n\in\bZ_{\geq 0}$, $\vec{j}\in (\mathrm{Box}(\Sigma))^{n}$ and $\beta' = d[\cF(m,r)] - \sum_{j\in J_-} \mu_j[D_1] + \sum_{j\in J_+}\mu_j[D_2]\in H_2(\cF(m,r),S^1)$. A genus $g$,$n$-pointed, $\vec{j}$ -twisted, 
    degree $\beta'$ decorated graph for $(\cF(m,r),S^{1})$ is a tuple $\widetilde{\Ga}=(\Ga,\vec{f},\vec{d}, \vec{g},\vec{s},\vec{k})$ consisting of 
    the following data.
    \begin{enumerate}
        \item $\Gamma$ is a compact connected graph. Let $V(\Ga)\sqcup V_\circ(\Ga)$ denote
        the set of vertices in $\Ga$, denoted by $\bullet$ and $\circ$, respectively. The $\circ$ vertex is univalent.
        Let $E(\Ga)$ denote the set of edges, where an edge $e$ is a line connecting two $\bullet$ vertices.
        Let $L(\Ga) = \{l_j: j=1,\dots,h\}$ denote the set of leaves, where a leaf $l_j$ is a line connecting one $\bullet$ vertex and one $\circ$ vertex.
        We require the leaves to be ordered. 
        Let $F(\Ga)$ be the set of flags:
        \[
            \{(e,v)\in E(\Ga)\times V(\Ga):v\in e\} \cup \{(l_j,v)\in L(\Ga)\times V(\Ga):v\in l_j\}.
        \]
        For each $v\in V(\Ga)$, let $F_v$ (\emph{resp.} $E_v$, $L_v$) denote the flags (\emph{resp.} edges, leaves) attached to $v$, and let $\val(v)=|F_v|$ denote the number of flags incident to $v$.
        \item The \emph{label map} $\vec{f}: V(\Ga)\rightarrow \{-m,r\}$ labels each $\bullet$ with a number.
            If $v_1,v_2\in V(\Ga)$ are connected by an edge, we require
            $\vec{f}(v_1)\neq \vec{f}(v_2)$.
        \item The \emph{degree map} $\vec{d}:E(\Ga)\cup L(\Ga)\rightarrow \bZ_{>0}$ sends an edge $e$ (\emph{resp.} a leaf $l_j$) to a positive integer $\vec{d}(e)=d_e$ (\emph{resp.} $\vec{d}(l_j)=d_{l_j}$).
        \item The \emph{genus map} $\vec{g}:V(\Ga)\rightarrow\bZ_{\geq 0}$ sends a vertex $v\in V(\Ga)$ to a nonnegative integer $g_v$.
        \item The \emph{marking map} $\vec{s}:\{1,2,\dots,n\}\rightarrow V(\Ga)$. For each $v\in V(\Ga)$, define $S_v := \vec{s}^{-1}(v)$, and $n_v=|S_v|$.
    \item $\vec{k}$ is the twisting map that sends each flag $(e,v), (l_j,v)\in F(\Gamma)$ to some $k_{(e,v)}, k_{(l_j,v)}\in G_{\vec{f}(v)}$, and each marking $i\in\{1,\dots, n\}$ to some $k_{i}\in G_{\vec{f}(\vec{s}(i))}$.
    \end{enumerate}
    The data are required to satisfy the following conditions:
    \begin{itemize}
        \item [(i)] (genus) $g = \sum_{v\in V(\Ga)}g_v + |E(\Ga)| - |V(\Ga)| + 1$.
        \item [(ii)] (degree) $d = \sum_{e\in E(\Ga)}d_e$.
        \item [(iii)] (winding numbers) Let $l_j\in L(\Ga)$ be the $j$-th leaf, and let $v_j\in V(\Ga)$ be its incident $\bullet$ vertex, then 
                $\mu_j = \mathrm{sgn}(\vec{f}(v_{j}))\vec{d}(l_j)$.
        \item [(iv)] (compatibility along an edge) For any edge $e\in E(\Ga)$, if $v, v'\in V(\Ga)$ are the two incident vertices, then $k_{(e,v)} \in G_{\vec{f}(v)}$ and $k_{(e,v')}\in G_{\vec{f}(v')}$ are determined by $\vec{d}(e)\in\bZ_{>0}$. For any leaf $l_j\in L(\Ga)$, if $v\in V(\Ga)$ is the incident vertex, then $k_{(l_j,v)}\in G_{\vec{f}(v)}$ is determined by $\vec{d}(l_j)\in \bZ_{>0}$.
        \item[(v)] (compatibility at a vertex) Given $v\in V(\Ga)$, let $E_v, L_v$ and $S_v$ be the set of edges, leaves, and marked points attached to $v$, then
        \[
            \prod_{e\in E_v}k^{-1}_{(e,v)}\prod_{l_j\in L_v}k^{-1}_{(l_j,v)}\prod_{i\in S_v}k_i = 1.
        \]
        \item [(vi)](compatibility with $\vec{j}$) For each $i=1,\dots, n$, the pair $(p_{\vec{f}(\vec{s}(i))},k_{i})$ represents a point in
the inertia component $\cF_{j_{i}}$.
    \end{itemize}
    Let $G_{g,\vec{j}}(\cF(m,r),S^{1}|\beta',\vec{\mu})$ be the set of all decorated graphs $\vec{\Ga}=(\Ga,\vec{f},\vec{d}, \vec{g},\vec{s},\vec{k})$ satisfying the above constraints. 
\end{defn}

Given a decorated graph $\vec{\Ga}\in G_{g,\vec{j}}(\cF(m,r),S^{1}|\beta',\vec{\mu})$. Let $V^S(\Ga)$ be the set of stable vertices:
\[
    V^S(\Ga) := \{v\in V(\Ga):2g_v-2+\val(v)+n_v >0\}.
\]

\noindent
Let $\Aut(\widetilde{\Ga})$ denote the group of automorphisms of $\widetilde{\Ga}$, and let $A^0_{\widetilde{\Ga}}$ be the group of covering automorphisms
\[
    A^0_{\widetilde{\Ga}} = \prod_{e\in E(\Ga)}\bZ_{\vec{d}(e)}\times\prod_{l_j\in L(\Ga)}\bZ_{\vec{d}(l_j)}.
\]
\noindent
The automorphism group $A_{\widetilde{\Ga}}$ fits in the short exact sequence: 
\[
    1 \rightarrow A^0_{\widetilde{\Ga}}
    \rightarrow A_{\widetilde{\Ga}}\rightarrow \Aut(\widetilde{\Ga})\rightarrow 1.
\]
\noindent
We set 
\[
    \overline{\cM}_{\widetilde{\Ga}} := \prod_{v\in V^S(\Ga)}\overline{\cM}_{g_v,\vec{k}(v)}(\BG_{\vec{f}(v)}),
\quad
    F_{\widetilde{\Ga}} := [\overline{\cM}_{\widetilde{\Ga}}/A_{\widetilde{\Ga}}].
\]

Every decorated graph $\widetilde{\Ga}\in G_{g,\vec{j}}(\cF(m,r),S^{1}|\beta',\vec{\mu})$ represents a topological type of the $T$-invariant stable open map $u:(\Si,\partial\Si,x_1,\dots,x_n)\rightarrow (\cF(m,r),S^{1})$ in the following way:
\begin{enumerate}
    \item Every $v\in V(\Ga)$ represents a stable curve or a single point $\Si_v$ in the domain curve $\Si$. 
    If $\Si_v$ is a stable curve, it is of genus-$g_v$ with marked points $S_v$.
    The map $u$ contracts $\Si_v$ to the fixed-point $p_{\vec{f}(v)}$.
    \item Every leaf $l_j\in L(\Ga)$ represents a disk $D_j$ in the domain curve $\Si$. 
    The map $u|_{D_j}$ is a standard degree $\vec{d}(l_j)=|\mu_j|$ cover of disk branched at $p_{\vec{f}(v_j)}$, where $v_j$ is the $\bullet$ vertex attached at $l_j$.
    \item Every edge $e\in E(\Ga)$ represents a sphere $\cF(m,r)\cong C\subset \Si$,
    so that $u|_C$ is a degree $\vec{d}(e)=d_e$ cover of $\cF(m,r)$ branched at $p_{-m},p_{r}$. 
    \item For each $i\in\{ 1,..., n\}$, the pair $(p_{\vec{f}(\vec{s}(i))},k_{i})$ represents a point in the inertia component $\cF_{j_{i}}$.
\end{enumerate}

Vice versa, given a $T$-invariant stable open map $u$, one can read off its decorated graph $\widetilde{\Gamma}$. Therefore, we can use the decorated graphs to 
describe the connected components of the 
$T$-fixed locus of the moduli space $\overline{\cM}_{(g,h), \vec{j}}(\cF(m,r),S^{1}|\beta',\vec{\mu})$:
\begin{equation}\label{eq:fixed-locus}
    \overline{\cM}_{(g,h), \vec{j}}(\cF(m,r),S^{1}|\beta',\vec{\mu})^T = \bigcup_{\widetilde{\Ga} \in G_{g,\vec{j}}(\cF(m,r),S^{1}|\beta',\vec{\mu})}
    F_{\widetilde\Ga}.
\end{equation}

\begin{figure}[H]

\tikzset{every picture/.style={line width=0.75pt}} 

\begin{tikzpicture}[x=0.75pt,y=0.75pt,yscale=-1,xscale=1]

\draw    (153.19,60.5) -- (200.76,77) ;
\draw    (153.69,99) -- (200.76,78.18) ;
\draw    (153.19,158.5) -- (199.69,138) ;
\draw    (301.26,137.68) -- (339.26,158.18) ;
\draw    (199.76,77) .. controls (208.69,63.23) and (292.83,63.23) .. (301.76,77) ;
\draw    (199.76,77) .. controls (218.29,90.03) and (283.23,90.03) .. (301.76,77) ;
\draw    (199.69,136.5) .. controls (214.69,134.63) and (277.09,111.43) .. (301.76,77.68) ;
\draw    (199.69,138) .. controls (221.49,143.03) and (258.29,143.83) .. (299.26,138.68) ;

\draw   (429.4,58.01) .. controls (433.86,52.81) and (444.42,51.74) .. (453,55.62) .. controls (461.57,59.49) and (464.91,66.85) .. (460.45,72.05) .. controls (456,77.24) and (445.44,78.31) .. (436.86,74.44) .. controls (428.29,70.56) and (424.95,63.2) .. (429.4,58.01) -- cycle ;
\draw    (442.15,57.37) .. controls (436.67,62.88) and (443,68.28) .. (453.24,65.59) ;
\draw    (440.52,60.37) .. controls (445.63,60.42) and (448.48,62.76) .. (449.4,66.25) ;
\draw   (519.85,78.96) .. controls (511.15,74.11) and (508.81,63.7) .. (514.64,55.73) .. controls (520.46,47.76) and (532.23,45.23) .. (540.93,50.09) .. controls (549.63,54.95) and (551.96,65.35) .. (546.14,73.32) .. controls (540.32,81.3) and (528.55,83.82) .. (519.85,78.96) -- cycle ;
\draw    (524.79,64.23) .. controls (519.99,69.47) and (525.63,74.49) .. (534.67,71.87) ;
\draw    (523.37,67.07) .. controls (527.89,67.07) and (530.43,69.25) .. (531.28,72.53) ;
\draw    (528.36,52.77) .. controls (523.56,58.01) and (529.21,63.03) .. (538.24,60.41) ;
\draw    (526.95,55.61) .. controls (531.46,55.61) and (534,57.79) .. (534.85,61.07) ;
\draw   (421.44,115.03) .. controls (426.19,108.03) and (439.22,106.77) .. (450.55,112.21) .. controls (461.88,117.65) and (467.22,127.74) .. (462.47,134.74) .. controls (457.72,141.75) and (444.69,143.01) .. (433.36,137.57) .. controls (422.03,132.13) and (416.7,122.04) .. (421.44,115.03) -- cycle ;
\draw    (440.56,113.96) .. controls (436.79,118.25) and (441.23,122.36) .. (448.33,120.21) ;
\draw    (439.46,116.28) .. controls (443,116.28) and (445,118.07) .. (445.67,120.75) ;
\draw    (430.27,121.63) .. controls (426.5,125.91) and (430.93,130.02) .. (438.03,127.88) ;
\draw    (429.16,123.95) .. controls (432.71,123.95) and (434.7,125.74) .. (435.37,128.42) ;
\draw    (446.12,125.59) .. controls (442.35,129.88) and (446.79,133.99) .. (453.89,131.84) ;
\draw    (445.01,127.91) .. controls (448.56,127.91) and (450.56,129.7) .. (451.22,132.38) ;
\draw   (518.3,114.81) .. controls (521.48,108.73) and (529.83,106.99) .. (536.94,110.92) .. controls (544.06,114.84) and (547.26,122.95) .. (544.08,129.03) .. controls (540.9,135.1) and (532.55,136.85) .. (525.43,132.92) .. controls (518.31,129) and (515.12,120.89) .. (518.3,114.81) -- cycle ;
\draw    (527.56,113.15) .. controls (523.63,119.69) and (529.11,125.36) .. (537.06,121.71) ;
\draw    (526.51,116.62) .. controls (530.58,116.36) and (533.04,118.82) .. (534.05,122.69) ;
\draw   (453.16,113.03) .. controls (451.87,110.51) and (465.46,100.96) .. (483.52,91.69) .. controls (502.57,82.43) and (518.25,76.95) .. (519.55,79.47) .. controls (520.84,81.99) and (507.25,91.54) .. (488.19,101.81) .. controls (470.14,110.07) and (454.46,115.55) .. (453.16,113.03) -- cycle ;
\draw   (461.45,71.76) .. controls (461.45,68.74) and (472.06,66.29) .. (486.37,66.29) .. controls (500.68,66.29) and (512.29,68.74) .. (512.29,71.76) .. controls (512.29,74.78) and (500.68,77.23) .. (486.37,77.23) .. controls (472.06,77.23) and (461.45,74.78) .. (461.45,71.76) -- cycle ;
\draw   (455.43,55.45) .. controls (455.43,52.78) and (468.64,50.62) .. (484.93,50.62) .. controls (501.22,50.62) and (514.43,52.78) .. (514.43,55.45) .. controls (514.43,58.12) and (501.22,60.29) .. (484.93,60.29) .. controls (468.64,60.29) and (455.43,58.12) .. (455.43,55.45) -- cycle ;
\draw   (464.56,126.48) .. controls (464.35,123.2) and (476.01,119.8) .. (490.61,118.86) .. controls (505.21,117.93) and (517.21,119.82) .. (517.42,123.09) .. controls (517.63,126.36) and (505.97,129.77) .. (491.37,130.71) .. controls (476.77,131.64) and (464.76,129.75) .. (464.56,126.48) -- cycle ;
\draw    (452.22,76.49) .. controls (458.4,76.13) and (466.75,76.25) .. (470.95,83.34) ;
\draw    (452.22,76.09) .. controls (452.09,82.73) and (461.11,91.9) .. (464.04,91.16) ;
\draw   (470.84,83.21) .. controls (471.32,83.69) and (470.18,85.86) .. (468.3,88.06) .. controls (466.42,90.25) and (464.51,91.64) .. (464.03,91.16) .. controls (463.56,90.68) and (464.7,88.5) .. (466.58,86.31) .. controls (468.46,84.11) and (470.37,82.72) .. (470.84,83.21) -- cycle ;
\draw    (443.03,53.16) .. controls (444.02,47.06) and (445.43,38.83) .. (453.34,35.76) ;
\draw    (443.03,53.16) .. controls (449.57,54.31) and (460.02,46.81) .. (459.74,43.81) ;
\draw   (452.93,35.86) .. controls (453.48,35.46) and (455.45,36.92) .. (457.33,39.11) .. controls (459.21,41.31) and (460.29,43.41) .. (459.74,43.81) .. controls (459.19,44.21) and (457.22,42.75) .. (455.34,40.55) .. controls (453.46,38.36) and (452.38,36.25) .. (452.93,35.86) -- cycle ;
\draw    (455.82,139.69) .. controls (462,139.73) and (470.35,139.85) .. (474.37,146.64) ;
\draw    (455.82,139.69) .. controls (455.69,146.33) and (464.71,155.5) .. (467.64,154.76) ;
\draw   (474.44,146.81) .. controls (474.92,147.29) and (473.78,149.46) .. (471.9,151.66) .. controls (470.02,153.85) and (468.11,155.24) .. (467.63,154.76) .. controls (467.16,154.28) and (468.3,152.1) .. (470.18,149.91) .. controls (472.06,147.71) and (473.97,146.32) .. (474.44,146.81) -- cycle ;
\draw    (523.17,131.21) .. controls (521.19,137.07) and (518.47,144.96) .. (510.61,146.71) ;
\draw    (523.17,131.21) .. controls (516.9,129.01) and (505.37,134.72) .. (505.16,137.72) ;
\draw   (510.59,146.68) .. controls (509.98,146.98) and (508.27,145.22) .. (506.77,142.75) .. controls (505.27,140.28) and (504.55,138.03) .. (505.16,137.72) .. controls (505.77,137.42) and (507.47,139.18) .. (508.97,141.65) .. controls (510.47,144.12) and (511.19,146.37) .. (510.59,146.68) -- cycle ;
\draw    (485.8,60.29) .. controls (482.19,59.12) and (481.94,52.37) .. (485.8,50.62) ;
\draw    (486.37,77.23) .. controls (483.09,76.03) and (482.16,67.68) .. (486.37,66.29) ;
\draw    (487.09,102.03) .. controls (483.16,101.28) and (481.49,92.43) .. (486.93,90.62) ;
\draw    (489.08,130.33) .. controls (484.96,130.48) and (483.93,121.17) .. (488.2,118.95) ;
\draw    (485.8,50.62) .. controls (490.94,51.22) and (489.94,60.82) .. (485.8,60.29) ;
\draw    (486.37,66.29) .. controls (491.01,68.29) and (490.01,76.87) .. (486.37,77.23) ;
\draw    (486.93,90.62) .. controls (490.11,90.8) and (491.71,100) .. (486.93,102.03) ;
\draw    (488.2,118.95) .. controls (492.11,120.37) and (492.15,129.12) .. (489.08,130.33) ;
\draw   (444.89,196.15) .. controls (444.89,192.87) and (464.74,190.22) .. (489.23,190.22) .. controls (513.71,190.22) and (533.56,192.87) .. (533.56,196.15) .. controls (533.56,199.43) and (513.71,202.08) .. (489.23,202.08) .. controls (464.74,202.08) and (444.89,199.43) .. (444.89,196.15) -- cycle ;
\draw    (489.49,201.63) .. controls (484.89,200.13) and (484.69,191.83) .. (489.23,190.22) ;
\draw    (489.23,190.22) .. controls (493,191.03) and (493.8,199.93) .. (489.49,201.63) ;
\draw   (443.89,196.15) .. controls (443.89,195.87) and (444.12,195.65) .. (444.39,195.65) .. controls (444.67,195.65) and (444.89,195.87) .. (444.89,196.15) .. controls (444.89,196.43) and (444.67,196.65) .. (444.39,196.65) .. controls (444.12,196.65) and (443.89,196.43) .. (443.89,196.15) -- cycle ;
\draw   (533.56,196.15) .. controls (533.56,195.87) and (533.79,195.65) .. (534.06,195.65) .. controls (534.34,195.65) and (534.56,195.87) .. (534.56,196.15) .. controls (534.56,196.43) and (534.34,196.65) .. (534.06,196.65) .. controls (533.79,196.65) and (533.56,196.43) .. (533.56,196.15) -- cycle ;

\draw (146.33,55.4) node [anchor=north west][inner sep=0.75pt]  {$\circ $};
\draw (146.33,95.4) node [anchor=north west][inner sep=0.75pt]  {$\circ $};
\draw (146.33,155.4) node [anchor=north west][inner sep=0.75pt]  {$\circ $};
\draw (196.33,72.9) node [anchor=north west][inner sep=0.75pt]    {$\bullet $};
\draw (196.33,132.9) node [anchor=north west][inner sep=0.75pt]    {$\bullet $};
\draw (296.33,73.57) node [anchor=north west][inner sep=0.75pt]    {$\bullet $};
\draw (296.33,133.9) node [anchor=north west][inner sep=0.75pt]    {$\bullet $};
\draw (336.33,155.4) node [anchor=north west][inner sep=0.75pt]  {$\circ $};
\draw (176.33,216.06) node [anchor=north west][inner sep=0.75pt]    {$\widetilde{\Gamma} \in G_{g,\vec{j}}(\cF(m,r),S^{1}|\beta',\vec{\mu})$};
\draw (483.67,162.07) node [anchor=north west][inner sep=0.75pt]    {$\downarrow $};
\draw (426.34,190.06) node [anchor=north west][inner sep=0.75pt]    {$p_{r}$};
\draw (538.74,190.06) node [anchor=north west][inner sep=0.75pt]    {$p_{-m}$};
\draw (450.94,216.06) node [anchor=north west][inner sep=0.75pt]    {$\left(\cF(m,r) ,S^{1}\right)$};
\draw (170.74,150.06) node [anchor=north west][inner sep=0.75pt]    {$l_j$};
\end{tikzpicture}
\caption{The left figure is an example of the graph $\widetilde{\Gamma}$ $\in$ $G_{g,\vec{j}}(\cF(m,r),S^{1}|\beta',\vec{\mu})$. 
The right figure is an example of stable map corresponding to $\widetilde{\Gamma}$.}
\end{figure}

Given $\widetilde{\Ga}\in G_{g,\vec{j}}(\cF(m,r),S^{1}|\beta',\vec{\mu})$, we introduce the following notations:
\begin{itemize}
    \item (weight) We define
    \[
        {\bf w}(p_{-m}) = -\frac{\bp}{m}, \quad {\bf w}(p_{r}) = \frac{\bp}{r},
    \]
    For a flag $f\in F_v$, we define
    \begin{equation*}
        {\bf w}_f := \left\{
        \begin{aligned}
            &\frac{|G_{\vec{f}(v)}|{\bf w}(p_{\vec{f}(v)})}{d_e}, \quad f=(e,v),
            \\
            &\frac{|G_{\vec{f}(v)}|{\bf w}(p_{\vec{f}(v)})}{d_{l_j}}, \quad f=(l_j,v).    
        \end{aligned}
        \right.
    \end{equation*}
    \item (vertex contribution) Let $\phi_i\in G_i^*$ be the irreducible character which corresponds to the 1-dimensional $G_i$-representation of $T_{p_i}\cF(m,r)$, $i=-m,r$. Let $\bE_{\phi_i}$ be the $\phi_i$-twisted Hurwitz-Hodge bundle. For $v\in V(\Ga)$, with $i=\vec{f}(v)$, we define
        \[
            {\bf h}(v) = \frac{\Lambda^\vee_{\phi_i}({\bf w}(p_{\vec{f}(v)}))}{{\bf w}(p_{\vec{f}(v)})^{\delta_{|\widehat{G}_{v}|,1}}},
        \] 
        where $\Lambda^\vee_{\phi_i}(\su) := \sum_{j=0}^{\rank \bE_{\phi_i}} (-1)^j\lambda_j^{\phi_i}\su^{\rank \bE_{\phi_i}-j}$ and $\lambda_j^{\phi_i}$ is the $j$-th Hurwitz-Hodge class on $\overline{\cM}_{g,n}(\BG_{i})$. $\widehat{G}_{v}$ is the monodromy group of the $G_{\vec{f}(v)}$-cover of $\Sigma_{v}$ defined in \cite[Section 9.3.4]{Liu13}.
    \item (edge contribution) For $d\in \bZ_{>0}$, we define
        \[
            {\bf h}(e,d) = \frac{(-1)^{\lfloor\frac{d}{m}\rfloor} (\frac{d}{\bp})^{\lfloor\frac{d}{m}\rfloor+\lfloor\frac{d}{r}\rfloor}}{\lfloor\frac{d}{m}\rfloor!\lfloor\frac{d}{r}\rfloor!}.
        \]
    \item (node contribution) For $(e,v)\in E_v$, let $a_v := |G_{\vec{f}(v)}|$, we define 
        \[
            {\bf h}(e,v) = a_v{\bf w}(p_{\vec{f}(v)})^{\delta_{a_v|d_e}}.
        \]
\end{itemize}

By our definition of equivariant open Gromov-Witten invariants of $\cF(m,r)$ in \eqref{eqn:openGW}, we get the following proposition.
\begin{prop}\label{prop:localization} Let $\beta' = d[\cF(m,r)] - \sum_{j\in J_-} \mu_j[D_1] + \sum_{j\in J_+}\mu_j[D_2]\in H_2(\cF(m,r),S^{1})$. Then for $\gamma_1,\dots,\gamma_n\in H^*_{\CR,T}(\cF(m,r))$ and $g, a_1,\dots,a_n\in \bZ_{\geq 0}$, we have
    \begin{equation*}
        \begin{aligned}
            &\langle \tau_{a_1}(\gamma_1),\dots,\tau_{a_n}(\gamma_n)\rangle^{\OGW}_{g,\beta',\vec{\mu}} 
            \\
            = &\sum_{\vec{j}\in\mathrm{Box(\Sigma)}^n}\sum_{\widetilde{\Ga}\in G_{g,\vec{j}}(\cF(m,r),S^{1}|\beta',\vec{\mu})}\frac{1}{|\Aut(\widetilde{\Ga})|}\prod_{e\in E(\Ga)}\frac{{\bf h}(e,d_e)}{d_e}
            \prod_{v\in V(\Ga)}\Big(\prod_{(e,v)\in E_v} {\bf h}(e,v)\prod_{i\in S_v}i^*_{p_{\vec{f}(v)}}\gamma_{i}\Big)
            \\
            &\cdot\prod_{j\in J_-}D^1(-\mu_j)\prod_{j\in J_+}D^2(\mu_j)
            \prod_{v\in V(\Ga)}\int_{\overline{\cM}_{g_v,\vec{k}_{v}}(\BG_{\vec{f}(v)})}\frac{{\bf h}(v)\prod_{i\in S_v}\psi_i^{a_i}}{\prod_{l_j\in L_v}{\bf w}_{(l_j,v)}\prod_{f\in F_v}({\bf w}_{f}-\psi_{f})}.
        \end{aligned}
    \end{equation*}
    We use the following convention for the unstable integrals:
    \begin{equation*}
        \begin{aligned}
        \int_{\overline{\cM}_{0,(0)}(\BG_{i})}\frac{1}{{\bf w}-\psi} = \frac{\bf w}{|G_{i}|},
        \quad 
        \int_{\overline{\cM}_{0,(\mu,-\mu)}(\BG_{i})}\frac{\psi_2^a}{{\bf w}-\psi_1} =\frac{ (-{\bf w})^a}{|G_i|}&, \quad a\in \bZ_{\geq 0},
        \\
        \int_{\overline{\cM}_{0,(\mu,-\mu)}(\BG_{i})}\frac{1}{({\bf w}_1-\psi_1)({\bf w}_2-\psi_2)}= \frac{1}{({\bf w}_1+{\bf w}_2)|G_{i}|}&,
    \end{aligned}
    \end{equation*}
    for $i=-m,r$.
    
\end{prop}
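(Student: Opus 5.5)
Since the open invariants are defined in \eqref{eqn:openGW} as an integral of $e_T(N^\vir)^{-1}$ over the fixed locus, the plan is to apply \eqref{eq:fixed-locus}, split the integral over the components $F_{\widetilde\Ga}=[\overline{\cM}_{\widetilde\Ga}/A_{\widetilde\Ga}]$, and compute the restriction of the tangent--obstruction complex to each component explicitly. The stacky factor $1/|A_{\widetilde\Ga}|=\frac{1}{|\Aut(\widetilde\Ga)|}\prod_e\frac{1}{d_e}\prod_j\frac{1}{d_{l_j}}$ gives the prefactor $\frac{1}{|\Aut(\widetilde\Ga)|}\prod_e \frac{1}{d_e}$. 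The $1/d_{l_j}$ pieces are absorbed into the disk factors, which is consistent with the normalization $D^1,D^2$ being defined through $\mu/\bp$ times the pairing. The sum over $\vec j$ records the twisted sectors of the markings.

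Next I would use the normalization sequence of the domain $\Si=\bigcup_v\Si_v\cup\bigcup_e C_e\cup\bigcup_j D_j$ to write the virtual normal bundle as a product of four kinds of contribution. The first is the contracted components: $H^1(\Si_v,u^*T\cF(m,r))$ moving part is the $\phi_i$-twisted Hurwitz--Hodge bundle $\bE^\vee_{\phi_i}$ tensored with the weight ${\bf w}(p_{\vec f(v)})$, and $H^0$ contributes a weight ${\bf w}(p)$ only when the $G$-invariant part is nonzero, i.e.\ when the monodromy $\widehat G_v$ is trivial; this yields ${\bf h}(v)$. The second is the rational edges $C_e$: this is the closed football edge computation (as in \cite{Lan25}, or \cite[Section 9]{Liu13}), where $H^0(C_e,u^*T)$ minus the infinitesimal automorphisms gives ${\bf h}(e,d_e)$, with the floor functions coming from counting $G$-invariant sections of $\cO(d_e/m+d_e/r)$-type orbibundles. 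The third is the nodes: deformations smoothing a node give $1/({\bf w}_f-\psi_f)$, with ${\bf w}_f=|G|{\bf w}(p)/d$ accounting for the orbifold cotangent lines, and the identification of the tangent space at the node gives ${\bf h}(e,v)$ when $a_v\mid d_e$ (untwisted node), with the extra factor $a_v$ from gluing gerbes. The fourth is the disks: each leaf contributes its own deformation piece. I would identify this with the $\bC^*$-localized relative disk invariant of $\bP^1[|G|]$ from \cite{JPT11}, giving $D^1(-\mu_j)$ or $D^2(\mu_j)$ together with the node factor $1/({\bf w}_{(l_j,v)}({\bf w}_{(l_j,v)}-\psi))$. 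The insertions restrict as $i^*_{p}\gamma_i$, and $\psi_i^T$ restricts to $\psi_i$ on $\overline{\cM}_{g_v}$.

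Finally, I would check that the unstable vertices (valence one or two with no markings, and genus zero) reproduce the same formula when the stated conventions for unstable integrals are used, exactly as in the closed case. The main obstacle is the disk contribution. There is no honest tangent--obstruction complex for a bordered orbifold map here, so I would follow the \cite{FLT}/\cite{Liu20} convention of taking the deformation complex of the disk with the real boundary condition, computed via a doubling argument. I would then match it term-by-term with \cite[Equation (16)]{JPT11}, including the sign $(-1)^{\lfloor\mu/m\rfloor-1}$ and the twisted-sector factor $(\pm|G|/\bp)^{1-\delta}$. This matching is where the normalization of $D^1$ and $D^2$ must be verified carefully.
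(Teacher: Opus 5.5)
Your proposal is correct and follows essentially the same route as the paper. The paper obtains the formula directly from the definition \eqref{eqn:openGW}: it decomposes the fixed locus into the components $F_{\widetilde\Ga}$ and computes $e_T(N^\vir)^{-1}$ on each component as in \cite{FLT}, with vertex, edge and node factors as in \cite{Liu13}, which is exactly your outline. The one difference is that the paper defines the disk factor to be the localized relative invariant of $\bP^1[r]$ from \cite{JPT11}. So your term-by-term matching of the doubled disk deformation complex with \cite[Equation (16)]{JPT11}, together with the absorption of the $1/d_{l_j}$ and leaf-node factors into $D^1,D^2$, is a normalization check that the paper does not carry out explicitly.
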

\subsection{Open/descendant correspondence}
Given $u:(\Si,\partial\Si, {\bf x})\rightarrow (\cF(m,r),S^{1})$ that represents a point $\xi\in \overline{\cM}_{(g,h),n}(\cF(m,r),S^{1}|\beta',\vec{\mu})^T$,
we have 
\[
    \Si = C \cup \bigcup_{j=1}^h D_j,
\]
where $C$ is a closed nodal curve of genus $g$ with marked points $x_1,\dots,x_n$, and $D_j$'s are holomorphic disks.
$C$ and $D_j$ intersect at $y_j$. Let $\hat{u}= u|_C$ and $u_j=u|_{D_j}$.
Then $\hat{u}:(C,{\bf x}, {\bf y})\rightarrow \cF(m,r)$ represents a point $\hat{\xi}\in\overline{\cM}_{g,n+h}(\cF(m,r),\beta)^{T}$, 
where the $(n+j)$-th marked point $x_{n+j}$ is $y_j$.

The following definition is introduced in \cite[Definition 111]{Liu13}.

\begin{defn}[Decorated graphs]
    Define $G_{g,\vec{j}}(\cF(m,r),\beta)$ $(\vec{j}\in (\mathrm{Box}(\Sigma))^{n+h})$ to be the set of all decorated graphs $\hat{\Ga}=(\Ga,\vec{f},\vec{d},\vec{g},\vec{s},\vec{k})$ defined as follows. 
    Let $n\in\bZ_{\geq 0}$, $h\in \bZ_{>0}$ and $\beta=d[\cF(m,r)]\in E(\cF(m,r))$. A genus $g$, $(n+h)$-pointed,
    degree $\beta$ decorated graph for $\cF(m,r)$ is a tuple $\hat{\Ga}=(\Ga,\vec{f},\vec{d}, \vec{g},\vec{s},\vec{k})$ consisting of 
    the following data.
    \begin{enumerate}
        \item $\Gamma$ is a compact, connected 1-dimensional CW complex. Let $V(\Ga)$ denote
        the set of vertices in $\Ga$, denoted by $\bullet$. 
        Let $E(\Ga)$ denote the set of edges, where an edge $e$ is a line connecting two $\bullet$ vertices.
        Let $F(\Ga)$ be the set of flags:
        \[
            \{(e,v)\in E(\Ga)\times V(\Ga):v\in e\}.
        \]
        For each $v\in V(\Ga)$, let $F_v$ (\emph{resp.} $E_v$) denote the flags (\emph{resp.} edges) attached to $v$, and let $\val(v)=|F_v|=|E_v|$ denote the number of flags (\emph{resp.} edges) incident to $v$.
        \item The \emph{label map} $\vec{f}: V(\Ga)\rightarrow \{-m,r\}$ labels each $\bullet$ with a number.
            If $v_1,v_2\in V(\Ga)$ are connected by an edge, we require
            $\vec{f}(v_1)\neq \vec{f}(v_2)$.
        \item The \emph{degree map} $\vec{d}:E(\Ga)\rightarrow \bZ_{>0}$ sends an edge $e$ to a positive integer $\vec{d}(e)=d_e$.
        \item The \emph{genus map} $\vec{g}:V(\Ga)\rightarrow\bZ_{\geq 0}$ sends a vertex $v\in V(\Ga)$ to a nonnegative integer $g_v$.
        \item The \emph{marking map} $\vec{s}:\{1,2,\dots,n+h\}\rightarrow V(\Ga)$. For each $v\in V(\Ga)$, define $S_v := \vec{s}^{-1}(v)$, and $n_v=|S_v|$.
        \item $\vec{k}$ is the twisting map that sends each flag $(e,v)\in F(\Gamma)$ to some $k_{(e,v)}\in G_{\vec{f}(v)}$, and each marking $i\in\{1,\dots, n+h\}$ to some $k_{i}\in G_{\vec{f}(\vec{s}(i))}$.
    \end{enumerate}
    The data is required to satisfy the following conditions:
    \begin{itemize}
        \item [(i)] (genus) $g = \sum_{v\in V(\Ga)}g_v + |E(\Ga)| - |V(\Ga)| + 1$.
        \item [(ii)] (degree) $d = \sum_{e\in E(\Ga)}d_e$.
        \item [(iii)] (compatibility along an edge) For any edge $e\in E(\Ga)$, if $v, v'\in V(\Ga)$ are the two incident vertices, then $k_{(e,v)} \in G_{\vec{f}(v)}$ and $k_{(e,v')}\in G_{\vec{f}(v')}$ are determined by $\vec{d}(e)\in\bZ_{>0}$.
        \item[(iv)] (compatibility at a vertex) Given $v\in V(\Ga)$, let $E_v$ and $S_v$ be the set of edges and marked points, then
        \[
            \prod_{e\in E_v}k^{-1}_{(e,v)}\prod_{i\in S_v}k_i = 1.
        \]
        \item [(v)](compatibility with $\vec{j}$) For each $i=1,\dots, n+h$, the pair $(p_{\vec{f}(\vec{s}(i))}
,k_{i})$ represents a point in
the inertia component $\cF_{j_{i}}$.
    \end{itemize}
\end{defn}

Let $F_{\hat{\Ga}}$ be the connected component corresponding to $\hat{\Ga}$. We have
\[
    \overline{\cM}_{g,n+h}(\cF(m,r),\beta)^{T} = \bigcup_{\hat{\Ga}\in G_{g,n+h}(\cF(m,r),\beta)}F_{\hat{\Ga}}.
\]
By \cite[Theorem 137]{Liu13}, we get
\begin{prop}\label{prop:localization-liu} Let $\beta=d[\cF(m,r)]\in E(\cF(m,r))$. Then for $\gamma_1,\dots,\gamma_{n+h}\in H^*_{\CR,T}(\cF(m,r))$ and $g, a_1,\dots, a_{n+h}\in\bZ_{\geq 0}$, we have
    \begin{equation*}
    \begin{aligned}
        &\langle \tau_{a_1}(\gamma_1),\dots,\tau_{a_{n+h}}(\gamma_{n+h})\rangle^{\cF(m,r),T}_{g,n+h,\beta} 
            \\
            = &\sum_{\vec{j}\in{\rm Box}(\Sigma)^{n+h}}\sum_{\hat{\Ga}\in G_{g,\vec{j}}(\cF(m,r),\beta)}\frac{1}{|\Aut(\hat{\Ga})|}\prod_{e\in E(\Ga)}\frac{{\bf h}(e,d_e)}{d_e}
            \prod_{v\in V(\Ga)}\Big(\prod_{(e,v)\in E_v} {\bf h}(e,v)\prod_{i\in S_v}i^*_{p_{\vec{f}(v)}}\gamma_{i}\Big)
            \\
            &\cdot\prod_{v\in V(\Ga)}\int_{\overline{\cM}_{g_v,\vec{k}_{v}}(\BG_{\vec{f}(v)})}\frac{{\bf h}(v)\prod_{i\in S_v}\psi_i^{a_i}}{\prod_{e\in E_v}({\bf w}_{(e,v)}-\psi_{(e,v)})}.
    \end{aligned}
\end{equation*}
\end{prop}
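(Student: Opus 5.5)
The plan is to apply the virtual localization formula of Graber--Pandharipande, in its orbifold form, to the $T$-action on $\overline{\cM}_{g,n+h}(\cF(m,r),\beta)$, following the argument of \cite[Theorem 137]{Liu13} for toric orbifold curves. The first step is to use the decomposition of the fixed locus into the components $F_{\hat\Ga}$ stated just above. For each decorated graph we identify $F_{\hat\Ga}$ with $[\prod_{v\in V^S(\Ga)}\overline{\cM}_{g_v,\vec k_v}(\BG_{\vec f(v)})/A_{\hat\Ga}]$, where $A_{\hat\Ga}$ is the extension of $\Aut(\hat\Ga)$ by $\prod_e\bZ_{d_e}$. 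Since each edge component is a degree $d_e$ cover of $\cF(m,r)$ branched only over $p_{-m}$ and $p_r$, the covering automorphisms produce the factor $1/(d_e|\Aut(\hat\Ga)|)$. We also have $[F_{\hat\Ga}]^{\vir}=[F_{\hat\Ga}]$, because the fixed part of the obstruction theory is the tangent space of the moduli of twisted stable curves to $\BG$.

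The second step is to compute $1/e_T(N^{\vir})$ from the tangent--obstruction sequence. We split the deformation complex into contributions from automorphisms and deformations of the domain, from deformations of the map $H^0-H^1$ of $u^*T\cF(m,r)$, and from smoothing of nodes. We then use the normalization sequence to separate $H^\bullet(\Si,u^*T\cF(m,r))$ into vertex pieces, edge pieces and node corrections.

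\textbf{Vertices.} On a contracted component at $p_i$, $H^1(\Si_v,u^*T)$ is the $\phi_i$-twisted dual Hurwitz--Hodge bundle, which gives $\Lambda^\vee_{\phi_i}({\bf w}(p_i))$. The invariant part of $H^0$ is nonzero only when the monodromy $\widehat G_v$ is trivial, and this gives the denominator ${\bf w}(p_i)^{\delta_{|\widehat G_v|,1}}$. Together these yield ${\bf h}(v)$.

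\textbf{Nodes.} A node joining an edge to a vertex contributes ${\bf w}_{(e,v)}-\psi_{(e,v)}$ in the denominator from smoothing. The normalization correction, namely the fiber $T_{p_i}$ counted when the twisting is trivial (that is, $a_v\mid d_e$), together with the gerbe factor $a_v$ coming from the orbifold node, gives ${\bf h}(e,v)$.

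\textbf{Edges.} For the edges we compute $H^0-H^1$ of $u^*T\cF(m,r)$ on a degree $d$ orbifold cover by enumerating the $T$-weights of the invariant sections. The weights are $\frac{k}{d}\bp$ with $k$ ranging over the appropriate fractional shifts, bounded by $\lfloor d/m\rfloor$ and $\lfloor d/r\rfloor$ on the two sides, after removing the infinitesimal automorphism weight. The product of these weights gives ${\bf h}(e,d)$.

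The insertions restrict to $i^*_{p_{\vec f(v)}}\gamma_i$, and the $\psi_i^T$ restrict to the vertex $\psi$-classes. Unstable vertices are handled by the stated conventions; this is where the $(0)$ and $(\mu,-\mu)$ integrals arise, with the factor $1/|G_i|$ coming from the $\BG$ gerbe.

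The main obstacle I expect is the edge computation: getting exactly ${\bf h}(e,d)$, including the sign $(-1)^{\lfloor d/m\rfloor}$ and the correct floors, and reconciling it with the node factors ${\bf h}(e,v)$ so that the trivial and nontrivial twisting cases at each endpoint are neither double counted nor missed. I would first check this against the case $m=r=1$, where the formula must reduce to the standard $\bP^1$ edge factor $\frac{(-1)^d d^{2d}}{(d!)^2\bp^{2d}}$. Since the statement is precisely \cite[Theorem 137]{Liu13} specialized to the football, most of this can be cited, leaving only the verification that Liu's general weights specialize to the stated ${\bf w}_f$, ${\bf h}(v)$, ${\bf h}(e,d)$ and ${\bf h}(e,v)$.
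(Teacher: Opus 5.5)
Your proposal is correct and follows the paper's route: the paper proves this proposition simply by specializing \cite[Theorem 137]{Liu13} (orbifold virtual localization over the fixed loci $F_{\hat\Ga}$) to $\cF(m,r)$, and your account of where ${\bf h}(v)$, ${\bf h}(e,v)$, ${\bf h}(e,d)$ and the $1/d_e$ from $A_{\hat\Ga}$ come from is exactly what that specialization amounts to. One small correction to your edge step: no fractional shifts occur. On a degree-$d$ edge, $H^0(u^*T\cF(m,r))$ has weights exactly $a\bp/d$ for integers $-\lfloor d/m\rfloor\le a\le\lfloor d/r\rfloor$, and deleting $a=0$ gives ${\bf h}(e,d)$ on the nose; the endpoint weight ${\bf w}(p_r)$ (resp.\ ${\bf w}(p_{-m})$) appears among these precisely when $r\mid d$ (resp.\ $m\mid d$), which matches the $\delta_{a_v|d_e}$ in ${\bf h}(e,v)$.
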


Let $\widetilde{\Ga}$ and $\hat{\Ga}$ be the decorated graphs corresponding to the map $u$ and $\hat{u}$ respectively.
There is a map 
\begin{equation}\label{eq:graph-map}
    G_{g,\vec{j}}(\cF(m,r),S^{1}|\beta',\vec{\mu})\rightarrow G_{g,(\vec{j},\iota(l_1),\dots,\iota(l_h))}(\cF(m,r),\beta): \widetilde{\Ga}\mapsto\hat{\Ga},
\end{equation}    
by cutting all leaves $l_j$ of $\widetilde{\Ga}$, and then labeling the attached $\bullet$ vertex of $l_j$ with $n+j$, with the corresponding twisting map $k_{n+j}$ with $k_{(l_j,v)}^{-1}$.
Because the leaves $l_j$ are labeled, we have $\Aut(\widetilde{\Ga})=\Aut(\hat{\Ga})$.

By Proposition \ref{prop:localization}, Equation \eqref{eq:graph-map} and Proposition \ref{prop:localization-liu}, we give an open/descendant correspondence theorem:

\begin{theorem}\label{thm:open-descendant} Let $\beta' = d[\cF(m,r)] - \sum_{j\in J_-} \mu_j[D_1] + \sum_{j\in J_+}\mu_j[D_2]\in H_2(\cF(m,r),L)$. Then for $\gamma_1,\dots,\gamma_n\in H^*_{\CR,T}(\cF(m,r))$ and $g, a_1,\dots, a_n\in\bZ_{\geq 0}$, we have
    \begin{equation*}
    \begin{aligned}
   & \langle \tau_{a_1}(\gamma_1)\dots\tau_{a_n}(\gamma_n)\rangle^{\OGW}_{g,\beta',\vec{\mu}}
    = \prod_{j\in J_-}D^1(-\mu_j)\prod_{j\in J_+}D^2(\mu_j)\ \ \ \ \ \ \ \ \ \ \ \ \
    \\
    & \cdot \int_{[\overline{\cM}_{g,n+h}(\cF(m,r),\beta)]^{\vir,T}}\frac{\prod_{i=1}^{n}\psi_i^{a_i}\ev_i^*(\gamma_i)\prod_{j\in J_-}\ev_{n+j}^* {\one_{-\mu_j,-m}}\prod_{j\in J_+}\ev_{n+j}^* \one_{-\mu_j,r}}
        {\prod_{j=1}^h\frac{\bp}{\mu_j}(\frac{\bp}{\mu_j}-\psi_{n+j})},
    \end{aligned}
\end{equation*}
where 
\begin{equation*}\begin{aligned}
    &\one_{-\mu,r} := \one_{h,r}, \ & -\mu \equiv h\pmod r, \quad\quad 0\leq h\leq r-1.
\\
&\one_{-\mu,-m} := \one_{h,-m}, \ & -\mu \equiv h\pmod m, \ -m+1\leq h\leq 0.
\end{aligned}\end{equation*}
\end{theorem}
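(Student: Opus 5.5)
The plan is to compare the two localization formulas graph by graph. Expand the left-hand side with Proposition \ref{prop:localization}. Expand the right-hand side, which is a closed equivariant descendant integral over $\overline{\cM}_{g,n+h}(\cF(m,r),\beta)$, with Proposition \ref{prop:localization-liu}. I would first expand the factor $\prod_j \frac{\mu_j}{\bp}\cdot\frac{1}{\bp/\mu_j-\psi_{n+j}}$ as a geometric series in $\psi_{n+j}$. This makes it a sum of descendant insertions $\tau_{a}(\one_{-\mu_j,\pm})$ at the extra markings $n+1,\dots,n+h$, to which Proposition \ref{prop:localization-liu} applies termwise. Resumming afterwards, each vertex integrand acquires the factor $\frac{1}{(\bp/\mu_j)(\bp/\mu_j-\psi_{n+j})}$ at the marking $n+j$.

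\textbf{Step 1: the graph bijection.} The map \eqref{eq:graph-map}, $\widetilde{\Ga}\mapsto\hat{\Ga}$, cuts each leaf $l_j$ and replaces it by the marking $n+j$ with twisting $k_{n+j}=k_{(l_j,v)}^{-1}$. I would check that it is a bijection onto those graphs $\hat\Ga$ whose marking $n+j$ has twisting compatible with the insertion class.

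\begin{itemize}
\item The insertion $\one_{-\mu_j,r}$ (resp. $\one_{-\mu_j,-m}$) has nonzero restriction only at $p_r$ (resp. $p_{-m}$), except in the untwisted sector. This forces $\vec f(\vec s(n+j))$ to be the correct fixed point, matching condition (iii), $\mu_j=\mathrm{sgn}(\vec f(v_j))\,d_{l_j}$.
\item Its twisted sector equals the twisting $k_{(l_j,v)}^{-1}$ determined by $d_{l_j}=|\mu_j|$.
\item Vertex compatibility (v) for $\widetilde\Ga$ becomes compatibility (iv) for $\hat\Ga$.
\end{itemize}

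The automorphism groups agree because the leaves are labeled, and $E(\Ga)$, the genera, and the edge and node contributions ${\bf h}(e,d_e)$, ${\bf h}(e,v)$ are literally identical. The disk factors $\prod D^1\prod D^2$ appear on both sides by construction.

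\textbf{Step 2: vertex integrands.} It remains to match, at each vertex $v$ and for each $l_j\in L_v$, the open factor
\[
\frac{1}{{\bf w}_{(l_j,v)}({\bf w}_{(l_j,v)}-\psi_{(l_j,v)})}
\]
with the closed factor
\[
\frac{i^*_{p_{\vec f(v)}}\one_{-\mu_j,\pm}}{(\bp/\mu_j)(\bp/\mu_j-\psi_{n+j})}.
\]
The weight is ${\bf w}_{(l_j,v)}=|G_{\vec f(v)}|{\bf w}(p_{\vec f(v)})/d_{l_j}$, which is $\frac{r\cdot\bp/r}{\mu_j}=\bp/\mu_j$ at $p_r$ and $\frac{m(-\bp/m)}{-\mu_j}=\bp/\mu_j$ at $p_{-m}$. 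Hence the denominators coincide, with $\psi_{(l_j,v)}$ identified with $\psi_{n+j}$. What is left to verify is that $i^*_{p_{\vec f(v)}}\one_{-\mu_j,\pm}$ equals $1$, or is absorbed by the normalization of $D^1,D^2$.

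\begin{itemize}
\item For twisted sectors this holds because $\one_h=y_1^h$ restricts to the fundamental class of the twisted sector $\cB\bZ_r$.
\item For the untwisted case ($r\mid\mu_j$), use $\one_{0,r}=(H+\bp_+)/\bp$. We have $i^*_{p_r}(H+\bp_+)=ry_1^r|_{p_r}$, which restricts to $\bp$, and $i^*_{p_{-m}}\one_{0,r}=0$.
\item The $-m$ side is analogous, using $\one_{0,-m}=-(H+\bp_-)/\bp$.
\end{itemize}

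\textbf{Step 3: unstable vertices.} These need separate care. Consider a $\bullet$ vertex with $g_v=0$ carrying one leaf and one edge, or one leaf and one marking. On the open side it is evaluated by the unstable conventions of Proposition \ref{prop:localization}. On the closed side the marking $n+j$ sits on a $2$-valent vertex that is stable only because of the marking, or it creates a $(0,2)$ or $(0,1)$ vertex. I would check each case against the unstable integral conventions, including the disk-only graph $h=1$, $d=0$ and the graph with a leaf attached alongside a single marked point.

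I expect this bookkeeping, together with the restriction of the untwisted classes $\one_{0,\pm}$ and the $|G|$ factors, to be the main obstacle: the formula is a direct term-by-term identification, but the normalization of the disk factors and of the $(0,2)$ conventions must line up exactly.
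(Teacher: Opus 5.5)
Your proposal is correct and follows the paper's own argument. The paper proves the theorem by comparing Proposition \ref{prop:localization} with Proposition \ref{prop:localization-liu} graph by graph through the leaf-cutting map \eqref{eq:graph-map}, with $\Aut(\widetilde{\Ga})=\Aut(\hat{\Ga})$ because the leaves are labeled. Your explicit checks fill in details the paper leaves implicit: that ${\bf w}_{(l_j,v)}=\bp/\mu_j$ at both fixed points, that $\one_{-\mu_j,\pm}$ restricts to $1$ at the correct fixed point and to $0$ at the other (also for the untwisted classes), and that the unstable-vertex conventions agree after resumming the geometric series in $\psi_{n+j}$.
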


\subsection{Generating function of open Gromov-Witten invariants}\label{sec:gen.fun.}
Consider the generating function of genus $g$, $n$ boundary circles open Gromov-Witten invariants of $(\cF(m,r),L)$: let $\bt=tH + \sum_{i=-m+1}^{r-1}t^i\one_i$,
\[
    F_{g,n}(\bt,Q;\tX_1,\dots,\tX_n) = \sum_{\beta\in E(\cF(m,r))\atop {l\geq 0}}\sum_{\vec{\mu}=(\mu_1,\dots,\mu_n)\atop {\mu_i\in\bZ_{\neq 0}}}
    \frac{Q^\beta}{l!}\langle\bt^l\rangle^{\OGW}_{g,\beta',\vec{\mu}}\prod_{i=1}^{n}\tX_{i}^{\mu_i}.
\]
Suppose $h\in \{0,1,\dots,r-1\}, k\in \bZ$, we define
\[
    \Phi^{h,r}_k(\tX) = \sum_{\mu>0\atop {\mu \equiv h (\mod r)}}(\frac{\bp}{\mu})^{-(k+2)}D^2(\mu)\tX^\mu.
\]
Suppose $h\in \{-m+1,\dots, -1, 0\}, k\in\bZ$, we define
\[
    \Phi^{h,-m}_k(\tX) = \sum_{\mu<0\atop {\mu \equiv h (\mod m)}}(\frac{\bp}{\mu})^{-(k+2)}D^1(-\mu)\tX^\mu.
\]
Let $$\dsum_{\alpha\in \{0,\dots,m+r-1\}}\txi^{\alpha}_{k}(\tX)\phi_{\alpha} = \dsum_{h=0}^{r-1}\Phi^{h,r}_{k}(\tX)\one_{h,r}+\dsum_{h=0}^{m-1}\Phi^{-h,-m}_{k}(\tX)\one_{-h,-m}.$$
and $\txi^{\alpha}(\tX,z)=\dsum_{k\geq -2}\txi^{\alpha}_{k}(\tX)z^{k}$.
With the above notations and the Theorem \ref{thm:open-descendant}, we have
\begin{prop}\label{prop:open-generating-function}For $g\geq 0, n>0$,
\begin{equation*}
    \begin{aligned}
                & \ \ \ \ \ F_{g,n}(\bt,Q;\tX_{1},\dots,\tX_{n}) 
        \\
                &= \sum_{k_1,\dots,k_n\geq 0}\sum_{\alpha_{i}\in \{0,\dots,m+r-1\}}
        \llangle\phi_{\alpha_1}\psi^{k_1},\dots,\phi_{\alpha_n}\psi^{k_n}\rrangle_{g,n}^{\cF(m,r),T}
        \prod_{j=1}^{n}\tilde{\xi}_{k_j}^{\alpha_j}(\tX_j)
        \\
                &= [z_1^{-1}\dots z_n^{-1}]\sum_{\alpha_1,\dots,\alpha_n\in\{0,\dots,m+r-1\}}
        \llangle\frac{\phi_{\alpha_1}}{z_1-\psi_1},\dots,\frac{\phi_{\alpha_n}}{z_n-\psi_n}\rrangle_{g,n}^{\cF(m,r),T}
        \prod_{j=1}^{n}\tilde{\xi}^{\alpha_j}(z_j,\tX_j).
    \end{aligned}
\end{equation*}
\end{prop}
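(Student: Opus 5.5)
The plan is to insert Theorem \ref{thm:open-descendant} into the definition of $F_{g,n}$ and re-sum. First, expand $\bt^l$ in the open correlator. For fixed $\beta$, $l$ and $\vec\mu$, Theorem \ref{thm:open-descendant} with $n$ replaced by $l$ gives
\[
\langle \bt^l\rangle^{\OGW}_{g,\beta',\vec\mu}=\prod_{j\in J_-}D^1(-\mu_j)\prod_{j\in J_+}D^2(\mu_j)\int_{[\overline{\cM}_{g,l+n}(\cF(m,r),\beta)]^{\vir,T}}\prod_{i=1}^l\ev_i^*\bt\prod_{j=1}^n\frac{\ev_{l+j}^*\one_{-\mu_j,\pm}}{\frac{\bp}{\mu_j}(\frac{\bp}{\mu_j}-\psi_{l+j})}.
\]
Here $\one_{-\mu_j,\pm}$ means $\one_{-\mu_j,r}$ for $\mu_j>0$ and $\one_{-\mu_j,-m}$ for $\mu_j<0$. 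I then expand each factor geometrically:
\[
\frac{1}{\frac{\bp}{\mu}(\frac{\bp}{\mu}-\psi)}=\sum_{k\ge0}\Big(\frac{\bp}{\mu}\Big)^{-(k+2)}\psi^k.
\]
Summing over $\beta$ and $l$ with weight $Q^\beta/l!$ turns the integral into the double correlator $\llangle \one_{-\mu_1,\pm}\psi^{k_1},\dots,\one_{-\mu_n,\pm}\psi^{k_n}\rrangle^{\cF(m,r),T}_{g,n}$. The points to check are the bookkeeping of the marked points: the $\bt$-insertions come last in the double correlator, and $\bt$ is symmetric, so reordering is harmless. Note also that $\beta$ and $\vec\mu$ determine $\beta'$.

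Second, I perform the sum over $\vec\mu$ one boundary at a time, which is possible because everything factorizes in $j$. For $\mu_j>0$, group by the residue class $h$ of $-\mu_j$ modulo $r$. For $\mu_j<0$, group by the residue class of $-\mu_j$ modulo $m$. In each case the coefficient of $\one_{h,r}\psi^k$ or $\one_{-h,-m}\psi^k$ is exactly a series $\Phi^{h,r}_k(\tX_j)$ or $\Phi^{-h,-m}_k(\tX_j)$, with the sign convention for $h$ matched to $\mu\equiv h$. The one thing I must verify carefully is the class convention. The definitions of $\Phi$ sum over $\mu\equiv h$, while the theorem inserts $\one_{-\mu,\cdot}$. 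I will reconcile these by checking that the definitions pair $\mu$ with the correct class, i.e. that $\one_{-\mu}$ is the class dual to the twisted sector hit by the disk. I expect this residue and sign matching, including the overlapping untwisted class $h=0$ on both sides, to be the main obstacle. Recall $\one_{0,r}-\one_{0,-m}=\one_0$ in the form $(H+\bp_+)/\bp+(H+\bp_-)/\bp$; I will confirm that the two untwisted contributions ($\mu\equiv0$ mod $r$ and mod $m$) are kept as separate terms.

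Third, by multilinearity in each insertion, the sum becomes a single vector-valued insertion
\[
\sum_h\Phi^{h,r}_k\one_{h,r}+\sum_h\Phi^{-h,-m}_k\one_{-h,-m}=\sum_\alpha\txi^\alpha_k\phi_\alpha
\]
in slot $j$. This is the first equality of Proposition \ref{prop:open-generating-function}, restricted to $k_j\ge0$, since only nonnegative powers of $\psi$ occur. The second equality is formal. Expanding $\frac{\phi_\alpha}{z-\psi}=\sum_a\phi_\alpha\psi^a z^{-a-1}$ and multiplying by $\txi^\alpha(z,\tX)=\sum_{k\ge-2}\txi^\alpha_k z^k$, the coefficient of $z^{-1}$ picks out the terms with $k=a\ge0$. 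The terms with $k=-2,-1$ contribute nothing, because the $a\ge0$ condition eliminates them. Hence the two expressions agree.
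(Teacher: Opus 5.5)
Your route is the paper's route. The paper gives no argument beyond invoking Theorem~\ref{thm:open-descendant}, expanding $1/\big(\frac{\bp}{\mu}(\frac{\bp}{\mu}-\psi)\big)$ geometrically and regrouping by residue class. The gap is the step you postpone, and what you assert in its place is false.

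If you group the $\mu_j>0$ terms by the class $h$ of $-\mu_j$ modulo $r$, the coefficient of $\one_{h,r}\psi^k$ is
\[
\sum_{\mu>0,\ \mu\equiv -h\ (\mathrm{mod}\ r)}\Big(\frac{\bp}{\mu}\Big)^{-(k+2)}D^2(\mu)\tX^{\mu}=\Phi^{r\langle -h/r\rangle,r}_k(\tX),
\]
not $\Phi^{h,r}_k(\tX)$. So the insertion Theorem~\ref{thm:open-descendant} actually produces in slot $j$ is
\[
\sum_{h=0}^{r-1}\Phi^{h,r}_k(\tX_j)\,\one_{r\langle -h/r\rangle,r}+\sum_{h=-m+1}^{0}\Phi^{h,-m}_k(\tX_j)\,\one_{-m\langle h/m\rangle,-m}.
\]
This differs from the displayed $\sum_h\Phi^{h,r}_k\one_{h,r}+\sum_h\Phi^{-h,-m}_k\one_{-h,-m}$ by the involution $h\mapsto -h$ on twisted sectors as soon as $r\geq 3$ or $m\geq 3$. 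Hence ``checking that the definitions pair $\mu$ with the correct class'' cannot succeed for the definition of $\txi$ as printed.

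What you must do is state the pairing the theorem forces: $\Phi^{h,r}_k$ multiplies $\one^*_{h,r}/(r^{1-\delta_{0,h}}\bp^{\delta_{0,h}})$, and $\Phi^{h,-m}_k$ multiplies $\one^*_{h,-m}/(m^{1-\delta_{0,h}}(-\bp)^{\delta_{0,h}})$. This is exactly the pairing the paper uses in Section~\ref{sec:A-explicit-disk}, and hence in the disk mirror theorem, so the definition of $\txi$ has to be read that way. The untwisted classes cause no trouble. However, your recalled identity has the wrong sign: $\one_{0,r}+\one_{0,-m}=\one_0$, because $\bp_+-\bp_-=\bp$.

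A second, smaller gap concerns unstable terms, which the statement covers since it allows $(g,n)=(0,1),(0,2)$. The expansion $1/\big(\frac{\bp}{\mu}(\frac{\bp}{\mu}-\psi)\big)=\sum_{k\geq0}(\frac{\bp}{\mu})^{-(k+2)}\psi^k$ in your first step only makes sense when $\overline{\cM}_{g,n+l}(\cF(m,r),\beta)$ is stable. Yet $F_{0,1}$ contains degree-zero contributions with $l=0,1$. For example, a single disk with no interior marking contributes $\frac{1}{\bp}D^2(\mu)\tX^\mu$ for $\mu\in r\bZ_{>0}$.

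In the $[z^{-1}]$ form these terms arise only from the conventions $\langle\frac{\gamma}{z-\psi}\rangle_{0,1,0}=z(\one,\gamma)$ and $\langle\frac{\gamma_1}{z-\psi},\gamma_2\rangle_{0,2,0}=(\gamma_1,\gamma_2)$, paired with $\txi_{-2}$ and $\txi_{-1}$ respectively. That is what the $z^{-2}$ and $z^{-1}$ terms of $\txi(z,\tX)$ are for. So your claim that the $k=-2,-1$ coefficients never contribute holds only for stable correlators, and the purely $k\geq0$ expression misses these terms.

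The degree-zero, $l=0$ term of $F_{0,2}$ has a coefficient containing the factor $\frac{\mu_1\mu_2}{\bp^2}\big(\frac{\bp}{\mu_1}+\frac{\bp}{\mu_2}\big)^{-1}$, which fits neither form. The paper handles it in Lemma~\ref{thm:annulus} by working with $(z_1+z_2)V_{z_1,z_2}$, i.e.\ with $\frac{1}{\bp}(\tX_1\partial_{\tX_1}+\tX_2\partial_{\tX_2})F_{0,2}$. You should either restrict your argument to stable moduli or treat these contributions separately.
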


Let $\vec{\Ga}\in{\bf\Ga}_{g,n}(\cF(m,r))$ be a labeled graph defined in Section \ref{sec:Givental-graph}. To each ordinary leaf $l_j\in L^o(\Ga)$ with
$\beta(l_j)=\beta\in\{0,\dots,m+r-1\}$ and $k(l_j)=k\in \bZ_{\geq 0}$, we assign the following weight (open leaf)
\begin{equation}\label{eqn:A-model-open-leaf}
    (\tilde{\cL}^O)^\beta_k(l_j) = [z^k](\sum_{\alpha,\gamma\in\{0,\dots,m+r-1\}}
    \Big(\tilde{\xi}^\alpha(z,\tX_j)S^{\hat{\underline{\gamma}}}_{\ \alpha}(z)\Big)_{+}
    R(-z)_\gamma^{\ \beta})
\end{equation}
where $\big(\cdot\big)_{+}$ is the non-negative power part as a $z$-polynomial.

We define a new weight $\widetilde{\omega}^{O}_A(\vec{\Ga})$ of a labeled graph $\vec{\Ga}\in {\bf\Ga}_{g,n}(\cF(m,r))$ as 
\begin{equation*}
    \begin{aligned}
        \widetilde{\omega}^{O}_A(\vec{\Ga}) = &\prod_{v\in V(\Ga)}\Big(\sqrt{\Delta^{\beta(v)}(q)}\Big)^{2g(v)-2+\val(v)}\langle\prod_{h\in H(v)}\tau_{k(h)}\rangle_{g(v)}
        \prod_{e\in E(\Ga)}\cE^{\beta(v_1(e)),\beta(v_2(e))}_{k(h_1(e)),k(h_2(e))}
        \\
        & \cdot \prod_{l\in L^1(\Ga)}(\cL^1)^{\beta(l)}_{k(l)}(l)\prod_{j=1}^n(\tilde{\cL}^{O})^{\beta(l_j)}_{k(l_j)}(l_j).
    \end{aligned}
\end{equation*}
In \cite[Section 3.1]{Lan25} the descendant graph sum formula for $\cF(m,r)$ is established. By Proposition \ref{prop:open-generating-function}, we have the following open graph sum formula for $F_{g,n}$.
\begin{theorem}\label{thm:open-A-graph-sum}  \rm
    For $2g-2+n>0$, we have
    \[
        F_{g,n}(\bt,Q;\tX_{1},\dots,\tX_{n}) = \sum_{\vec{\Ga}\in {\bf \Ga}_{g,n}(\cF(m,r))}\frac{\widetilde{\omega}_A^O(\vec{\Ga})}{|\Aut(\vec{\Ga})|}.
    \]
\end{theorem}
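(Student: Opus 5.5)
The plan is to combine Proposition \ref{prop:open-generating-function} with the Givental graph sum formula, Theorem \ref{thm:descendant-graph-sum}. The only difference between the closed descendant graph sum and the claimed open graph sum is the weight on ordinary leaves, so the argument reduces to matching the insertions $\bu_j$ with the open insertions $\tilde\xi(z,\tX_j)$ and checking that the leaf weights agree.

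First, rewrite the first line of Proposition \ref{prop:open-generating-function} as a single descendant correlator. For each $j$, set
\[
\bu_j(z)=\sum_{k\geq 0}\sum_{\alpha}\tilde\xi^\alpha_k(\tX_j)\phi_\alpha z^k ,
\]
where the coefficients are formal power series in $\tX_j$, which is harmless since everything is extended linearly. By multilinearity,
\[
F_{g,n}=\llangle \bu_1,\dots,\bu_n\rrangle_{g,n}^{\cF(m,r),T}.
\]
Since $2g-2+n>0$, Theorem \ref{thm:descendant-graph-sum} applies. It expresses $F_{g,n}$ as the sum over ${\bf\Ga}_{g,n}(\cF(m,r))$ of $\omega^{\bu}_A(\vec\Ga)/|\Aut(\vec\Ga)|$.

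Second, compare weights. The vertex, edge and dilaton-leaf weights in $\omega^\bu_A$ and $\widetilde\omega^O_A$ are literally identical, so only the ordinary leaf weights need to be matched. In $(\cL^\bu)^\beta_k$, the quantity $\bu^\alpha_j/\sqrt{\Delta^\alpha(q)}$ paired with $S^{\hat{\underline\gamma}}_{\ \hat{\underline\alpha}}$ is the coordinate of $\bu_j$ in the normalized basis, and this equals the contraction of the $\phi_\alpha$-coordinates with $S^{\hat{\underline\gamma}}_{\ \alpha}$. The cleanest form of this identification is
\[
\sum_\alpha \bu^\alpha S^{\hat{\underline\gamma}}_{\ \alpha}=(\hat\phi_\gamma(\bq),\cS(\bu)),
\]
which is linear in $\bu$. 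Using it, $(\cL^\bu)^\beta_k(l_j)$ becomes
\[
[z^k]\sum_{\alpha,\gamma}\bigl(\tilde\xi^\alpha_{\geq0}(z,\tX_j)S^{\hat{\underline\gamma}}_{\ \alpha}(z)\bigr)_+R(-z)_\gamma^{\ \beta}.
\]

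The main point to check is that the truncation to $k\ge0$ in $\bu_j$ can be replaced by the full $\tilde\xi^\alpha(z,\tX_j)=\sum_{k\geq-2}\tilde\xi^\alpha_k z^k$ inside $(\cdot)_+$. This requires showing that
\[
\bigl(\tilde\xi^\alpha_{-2}z^{-2}+\tilde\xi^\alpha_{-1}z^{-1}\bigr)S^{\hat{\underline\gamma}}_{\ \alpha}(z)
\]
contributes nothing to the nonnegative part. I expect this to be the main obstacle. There are two ways to handle it.

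The first is to use that $S(z)=\mathrm{Id}+O(z^{-1})$ is a series in $z^{-1}$. Then multiplying by negative powers of $z$ produces only negative powers, so $(\cdot)_+$ kills these terms automatically. This is exactly the situation in the $\bP^1$ case of \cite{YZ}.

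The second, in case the precise normalization of the $S$-matrix makes the first argument fail, is to check via the open/descendant correspondence, Theorem \ref{thm:open-descendant}, that the $z^{-2},z^{-1}$ parts correspond to unstable contributions. By string and dilaton type identities these are absorbed, matching the conventions for unstable integrals in Proposition \ref{prop:localization}.

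Once the leaf weights agree, $\omega^\bu_A(\vec\Ga)=\widetilde\omega^O_A(\vec\Ga)$ for every graph, and summing over ${\bf\Ga}_{g,n}(\cF(m,r))$ gives the theorem.
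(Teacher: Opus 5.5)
Your proposal is correct and follows the same route as the paper. The paper obtains the theorem directly by feeding the open insertions $\sum_\alpha\tilde\xi^\alpha_k(\tX_j)\phi_\alpha$ from Proposition \ref{prop:open-generating-function} into the descendant graph sum of Theorem \ref{thm:descendant-graph-sum} (established in \cite{Lan25}) and reading off the ordinary-leaf weights. Your first argument for the $z^{-2},z^{-1}$ parts of $\tilde\xi^\alpha(z,\tX_j)$ is already sufficient: $\cS(z)$ is a series in $z^{-1}$ (by the convention $\langle \frac{\gamma_1}{z-\psi},\gamma_2\rangle_{0,2,0}=(\gamma_1,\gamma_2)$), so these terms are killed by $(\cdot)_+$, and the fallback via Theorem \ref{thm:open-descendant} is unnecessary.
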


Theorem \ref{thm:open-A-graph-sum} implies
\begin{coro}\label{thm:open-A-graph-sum-Qone}\rm
    Let $\omega_A^O(\vec{\Ga}) = \widetilde{\omega}_A^O(\vec{\Ga})\big|_{Q=1}$. For $2g-2+n>0$, we have
    \[
        F_{g,n}(\bt,1;\tX_{1},\dots,\tX_{n}) = \sum_{\vec{\Ga}\in {\bf \Ga}_{g,n}(\cF(m,r))}\frac{\omega_A^O(\vec{\Ga})}{|\Aut(\vec{\Ga})|}.
    \]
\end{coro}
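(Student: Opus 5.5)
The corollary follows from Theorem \ref{thm:open-A-graph-sum} by specializing the Novikov variable $Q$ to $1$. The plan is to check that this specialization is well defined on both sides, and that it commutes with the infinite sum over labeled graphs.

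\emph{Step 1: the left-hand side.} By construction, $F_{g,n}(\bt,Q;\tX_1,\dots,\tX_n)$ is a power series in $Q$ whose coefficients are series in $\bt$ and the $\tX_i$. As usual for the equivariant theory of $\cF(m,r)$, the divisor equation lets us absorb $Q$ into the variable $t$ dual to $H$: $Q^\beta$ always appears together with $e^{t\int_\beta H}$. Setting $Q=1$ therefore yields a well-defined formal series, consistent with how $\bq$ and the mirror map $\tau(\bq)$ are used at $Q=1$ elsewhere in the paper.

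\emph{Step 2: the right-hand side, graph by graph.} Each graph weight $\widetilde{\omega}^O_A(\vec{\Ga})$ is built from the following ingredients:
\begin{itemize}
\item the $\Psi$-matrix and $\Delta^\alpha(\bq)$ data;
\item the $R$-matrix;
\item the $S$-matrix entries $S^{\hat{\underline{\gamma}}}_{\ \alpha}(z)$ appearing in the open leaves \eqref{eqn:A-model-open-leaf};
\item the functions $\tilde{\xi}^\alpha$, which do not involve $Q$ at all.
\end{itemize}
Only the $S$-operator carries explicit $Q$-dependence through the double correlator. The other quantities are expressed through the quantum cohomology at $\bt$, which already incorporates $Q$ via the divisor equation. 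So $\widetilde{\omega}^O_A(\vec{\Ga})\big|_{Q=1}$ is well defined, and we take it as the definition of $\omega_A^O(\vec{\Ga})$, exactly as in the statement.

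\emph{Step 3: exchanging the specialization with the sum.} We must check that evaluating at $Q=1$ commutes with $\sum_{\vec{\Ga}\in{\bf\Ga}_{g,n}(\cF(m,r))}$. Fix $g$ and $n$. Stability together with the height constraint ($k\geq 2$ on dilaton leaves, and $\sum k\le \dim\overline{\cM}_{g(v),\val(v)}$ at each vertex) leaves only finitely many labeled graphs with nonzero vertex integrals. The sum is therefore finite, and the identity of Theorem \ref{thm:open-A-graph-sum} specializes termwise to $Q=1$.

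The main obstacle is the justification in Steps 1--2 that the $Q=1$ specialization is legitimate and does not merely make formal sense. I would argue via the divisor equation: $Q$ only appears in the combination $Qe^{t}$, so setting $Q=1$ amounts to a shift of $t$. Both sides of Theorem \ref{thm:open-A-graph-sum} are identities of formal series in this combined variable, so the identity persists after the substitution.
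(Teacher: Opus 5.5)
Your proposal is correct and follows the paper, which derives the corollary simply by setting $Q=1$ in Theorem \ref{thm:open-A-graph-sum}; your check that only finitely many labeled graphs in ${\bf \Ga}_{g,n}(\cF(m,r))$ have nonzero weight is sound, so the specialization can indeed be done term by term. One minor imprecision: the divisor equation for descendants has $\psi$-lowering correction terms, so $Q$ does not enter solely through $Qe^{t}$ (e.g.\ the $\cS$-operator carries an $e^{tH/z}$-type factor), but this does not obstruct the $Q=1$ specialization, which the paper uses throughout.
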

\begin{convention}
    From this point onwards, we use the specialization $\bt=\tau(\bq)$ in the calculation, where $\tau$ is the equivariant mirror map.
\end{convention}

\subsection{Explicit formula of $F_{0,1}$}\label{sec:A-explicit-disk}
Let $\one^*_{h,r}$ (resp. $\one^*_{h,-m}$) be the dual basis of $\one_{h,r}$ (resp. $\one_{h,-m}$) under the orbifold Poincar\'{e} pairing $(,)_{\cF(m,r),T}$:
$$
  \one_{h,r} = \frac{\one^{*}_{r\langle\frac{-h}{r}\rangle,r}}{r^{1-\delta_{0,h}}\bp^{\delta_{0,h}}},\quad
  \one_{h,-m} = \frac{\one^{*}_{-m\langle\frac{h}{m}\rangle,-m}}{m^{1-\delta_{0,h}}(-\bp)^{\delta_{0,h}}}.
$$

We write
\[
    F_{0,1}(\bt,Q;\tX) = \sum_{-m+1\leq h\leq 0} F_{0,1}^{-,h} + \sum_{0\leq h\leq r-1}F_{0,1}^{+,h}.
\]
\begin{align*}
    F_{0,1}^{+,h}(\bt,Q;\tX) &= \sum_{a\geq 0\atop {\mu = ra+h >0}} \frac{1}{r^{1-\delta_{0,h}}\bp^{\delta_{0,h}}}\llangle\frac{\one^*_{h,r}} { \frac{\bp}{\mu} (\frac{\bp}{\mu}-\psi)}\rrangle_{0,1}^{\cF(m,r),T} D^2(\mu) X^\mu
    \\
    &= \sum_{a\geq 0\atop {\mu = ra+h >0}}\sum_{k\geq 0} \frac{1}{r^{1-\delta_{0,h}}\bp^{\delta_{0,h}}}\llangle\one^*_{h,r} \psi^k\rrangle_{0,1}^{\cF(m,r),T}\big(\frac{\bp}{\mu}\big)^{-(k+2)}D^2(\mu) \tX^\mu
    \\
    &= \ \sum_{k\geq 0}\frac{1}{r^{1-\delta_{0,h}}\bp^{\delta_{0,h}}}\llangle\one^*_{h,r} \psi^k\rrangle_{0,1}^{\cF(m,r),T}\Phi_k^{h,r}(\tX).
\end{align*}

\begin{align*}
    F_{0,1}^{-,h}(\bt,Q;\tX) &= \sum_{a\geq 0\atop {\mu = -ma+h <0}} \frac{1}{m^{1-\delta_{0,h}}(-\bp)^{\delta_{0,h}}}\llangle\frac{\one^*_{h,-m}} { \frac{\bp}{\mu} (\frac{\bp}{\mu}-\psi)}\rrangle_{0,1}^{\cF(m,r),T} D^1(-\mu) \tX^\mu
    \\
    &= \sum_{a\geq 0\atop {\mu = -ma+h <0}}\sum_{k\geq 0}\frac{1}{m^{1-\delta_{0,h}}(-\bp)^{\delta_{0,h}}}\llangle\one^*_{h,-m} \psi^k\rrangle_{0,1}^{\cF(m,r),T}\big(\frac{\bp}{\mu}\big)^{-(k+2)}D^1(-\mu) \tX^\mu
    \\
    &= \ \sum_{k\geq 0}\frac{1}{m^{1-\delta_{0,h}}(-\bp)^{\delta_{0,h}}}\llangle\one^*_{h,-m} \psi^k\rrangle_{0,1}^{\cF(m,r),T}\Phi_k^{h,-m}(\tX).
\end{align*}

\subsubsection{Winding number is positive}
\[
 F_{0,1}^{+,h}(\bt,1;\tX) = \sum_{\mu=ra+h>0\atop{a\geq 0}} \frac{I_{+,h}(\bq,\frac{\bp}{\mu})}{r^{1-\delta_{0,h}}\bp^{\delta_{0,h}}}D^2(\mu)\tX^\mu.
\]
By \eqref{eqn:I-function}, we have
\begin{equation*}
    I_{+,h}(\bq,\frac{\bp}{\mu})= e^{\frac{-\sum_{a\in I_0}\mu{\bf w}_a\log q_a}{\bp}}\sum_{d\in \bK_{\eff,r}\atop{r\{-\tilde{d}_r\}=h}}e^{-\frac{\mu\bp_{-}\log q_*}{\bp}}\bq^d(\frac{\mu}{\bp})^{\lceil\deg d\rceil}\cdot \frac{\Ga(\frac{\mu}{r}+1-\{-\tilde{d}_r\})}{\tilde{d}_{-m}!\prod_{i\in I_0}d_i!\Ga(\frac{\mu}{r}+\tilde{d}_r+1)}.
\end{equation*}

For $1\leq h\leq r-1$, let $\mu = rk+h$, $k\geq 0$, and $-\tilde{d}_r= -b + \frac{h}{r}$,
\begin{equation*}
  \begin{aligned}
    F_{0,1}^{+,h}(\bt,1;\tX) = \frac{1}{\bp}\sum_{k\geq 0, \mu=rk+h\atop{d\in \bK_{\eff,r}\atop {-\tilde{d}_r=-b+\frac{h}{r}}}}e^{\frac{-\sum_{a\in I_0}\mu{\bf w}_a\log q_a}{\bp}}e^{-\frac{\mu\bp_{-}\log q_*}{\bp}}\bq^d \frac{(\frac{\mu}{\bp})^{k-2+\lceil\deg d\rceil}}{\tilde{d}_{-m}!\prod_{i\in I_0}d_i!(k+b)!}\tX^\mu
  \end{aligned}
\end{equation*}
For $h=0$, let $\mu = rk$, $k\geq 1$.
\begin{equation*}
  \begin{aligned}
    F_{0,1}^{+,0}(\bt,1;\tX) = \frac{1}{\bp}\sum_{k\geq 1, \mu=rk\atop{d\in \bK_{\eff,r}\atop{\tilde{d}_r\in\bZ}}}e^{\frac{-\sum_{a\in I_0}\mu{\bf w}_a\log q_a}{\bp}}e^{-\frac{\mu\bp_{-}\log q_*}{\bp}}\bq^d\frac{(\frac{\mu}{\bp})^{k-2+\lceil\deg d\rceil}}{\tilde{d}_{-m}!\prod_{i\in I_0}d_i!(k+\tilde{d}_{r})!}\tX^\mu
  \end{aligned}
\end{equation*}
In summary, for $0\leq h\leq r-1$, we have
\[
  F_{0,1}^{+,h}(\bt,1;\tX) = \frac{1}{\bp}\sum_{k\geq 0, \mu=rk+h>0\atop{d\in \bK_{\eff,r}\atop{r\{-\tilde{d}_r\}=h}}}e^{\frac{-\sum_{a\in I_0}\mu{\bf w}_a\log q_a}{\bp}}e^{-\frac{\mu\bp_{-}\log q_{*}}{\bp}}\bq^d\frac{(\frac{\mu}{\bp})^{k-2+\lceil\deg d\rceil}}{\tilde{d}_{-m}!\prod_{i\in I_0}d_i!(k+\lceil\tilde{d}_r\rceil)!}\tX^\mu.
\]

\subsubsection{Winding number is negative}
Similarly to the positive part, we have
\[
 F_{0,1}^{-,h}(\bt,1;\tX) = \sum_{\mu=-mk+h<0\atop{k\geq 0}} \frac{I_{-,h}(\bq,\frac{\bp}{\mu})}{m^{1-\delta_{0,h}}(-\bp)^{\delta_{0,h}}}D^1(-\mu)\tX^\mu,
\]
where
\[
  I_{-,h}(\bq,\frac{\bp}{\mu}) = e^{\frac{-\sum_{a\in I_0}\mu{\bf w}_a\log q_a}{\bp}}\sum_{d\in \bK_{\eff,-m}\atop{\{-\tilde{d}_{-m}\}=-\frac{h}{m}}}e^{-\frac{\mu\bp_{+}\log q_*}{\bp}}\bq^d(\frac{\mu}{\bp})^{\lceil\deg d\rceil}\cdot \frac{\Ga(\frac{-\mu}{m}+1-\{-\tilde{d}_{-m}\})}{\tilde{d}_{r}!\prod_{i\in I_0}d_i!\Ga(\frac{-\mu}{m}+\tilde{d}_{-m}+1)}.
\]
Therefore,
\[
  F_{0,1}^{-,h}(\bt,1;\tX) = \frac{1}{\bp}\sum_{k\geq 0, \mu=-mk+h<0\atop{d\in \bK_{\eff,-m}\atop{\{-\tilde{d}_{-m}\}=-\frac{h}{m}}}}e^{\frac{-\sum_{a\in I_0}\mu{\bf w}_a\log q_a}{\bp}}e^{-\frac{\mu\bp_{+}\log q_{*}}{\bp}}\bq^d\frac{(\frac{\mu}{\bp})^{k-2+\lceil\deg d\rceil}}{\tilde{d}_{r}!\prod_{i\in I_0}d_i!(k+\lceil\tilde{d}_{-m}\rceil)!}\tX^\mu.
\]

\section{Mirror curve and topological recursion}\label{sec:Bmodel}
\subsection{The equivariant superpotential and the Frobenius structure of the Jacobian ring}\label{sec:W_T}
Let $Y$ be a coordinate on $\bC^*$.
The $T$-equivariant superpotential $W_T:\bC^*\rightarrow \bC$ is defined as
\[
  W_T(Y) = Y^r + \sum_{l=1}^{r-1} \tilde{q}_lY^l + \tilde{q}_0 + \sum_{l=1}^{m} \tilde{q}_{-l}Y^{-l} - \bw_r\log(Y^r) - \sum_{l=1}^{r-1}\bw_l\log(\tilde{q}_lY^l) - \sum_{l=0}^m\bw_{-l}\log(\tilde{q}_{-l}Y^{-l}).
\] 
where $\tilde{q}_{i}$
are complex parameters. Let $Y=e^y$, the Jacobian ring of $W_T$ is 
\[
    \Jac(W_T) := \bC[Y,Y^{-1},\bp,q]\Big/{\Big\langle\frac{\partial W_T}{\partial y}\Big\rangle}.
\]
Let $\{P_\alpha\}$ be the set of critical points of $W_T$.
Endow a metric on $\Jac(W_T)$ by the residue pairing 
\[
    (f,g) = \sum_{P_\alpha}\Res_{Y=P_\alpha} \frac{f(Y)g(Y)}{\partial W_T/\partial y}\frac{dY}{Y}.
\]
We have the following isomorphism as Frobenius algebras:
\begin{prop}[\cite{CCK,CCIT15}]\label{prop:Frobenius}
    \begin{equation*}
        QH_{T}^{*}(\cF(m,r),\bC)\cong \mathrm{Jac}(W_{T}(Y))
    \end{equation*}
    as Frobenius algebras.
\end{prop}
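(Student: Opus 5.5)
The plan is to produce an explicit ring isomorphism and then check that it carries the Poincaré pairing to the residue pairing. The quantum side is given by the presentation recalled earlier,
\[
QH^*_T(\cF(m,r))\cong\bC[X,X^{-1},\bp,q]\Big/\Big\langle \sum_{i=-m}^{r} i\tilde q_iX^i-\bp\Big\rangle .
\]
On the Jacobian side, write $Y=e^y$. Differentiating $W_T$ with respect to $y$ gives
\[
\partial W_T/\partial y=\sum_{i=-m}^{r} i\tilde q_iY^i-\Big(r\bw_r+\sum_{l=1}^{r-1} l\bw_l-\sum_{l=1}^m l\bw_{-l}\Big)=\sum_i i\tilde q_iY^i-\bp ,
\]
since the logarithmic terms contribute exactly the constants $i\bw_i$. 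The assignment $X\mapsto Y$ therefore identifies the two rings verbatim, compatibly with the parameters $\tilde q_i$. This is the algebra isomorphism, and it is the statement in \cite{CCK,CCIT15} specialized to the football.

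For the metrics, the two semisimple algebras share the same idempotents: the critical points $P_\alpha$ of $W_T$ are exactly the roots $z_\alpha$ of $\sum i\tilde q_iX^i-\bp$. It therefore suffices to compare the norms of the idempotents $\phi_\alpha\leftrightarrow \prod_{\beta\ne\alpha}(Y-z_\beta)/(z_\alpha-z_\beta)$. On the B-side the residue pairing gives
\[
(\phi_\alpha,\phi_\alpha)=\Res_{Y=z_\alpha}\frac{1}{Y\,\partial_yW_T}\,dY=\frac{1}{z_\alpha\cdot\big(\partial_Y(\partial_yW_T)\big)(z_\alpha)} .
\]
We then write $\partial_yW_T=r Y^{-m}\prod_\beta(Y-z_\beta)$. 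This uses that the Laurent polynomial has leading coefficient $r\tilde q_r=r$, lowest term $-m\tilde q_{-m}Y^{-m}$, and $m+r$ roots. Substituting, the norm becomes
\[
(\phi_\alpha,\phi_\alpha)=\frac{z_\alpha^{m-1}}{r\prod_{\beta\neq\alpha}(z_\alpha-z_\beta)}=\frac{1}{\Delta^\alpha(\bq)},
\]
which matches the A-side formula recalled in the Frobenius structure subsection.

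The main obstacle is justifying the A-side inputs rather than the algebra: that the quantum product at $\bt=\tau(\bq)$ has this presentation, and that the Poincaré norms of the quantum idempotents equal $1/\Delta^\alpha$. Both follow from the equivariant mirror theorem $I^T=J^T(\tau(\bq))$ via Birkhoff factorization, i.e.\ from \cite{CCIT15,CCK} and the computations of \cite{Lan25}, and we take them as given. If one only trusts the ring presentation, the pairing can instead be checked by a separate argument. Generically distinct roots give semisimplicity, and both pairings are Frobenius for the same product. It then suffices to compare the counit $(1,\cdot)$ on a generating set, for instance $(1,Y^k)$ for small $k$, against classical Poincaré pairings at $\bq\to0$ plus deformation invariance from flatness. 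We expect the norm identity from \cite{Lan25} to suffice, so we will cite it.
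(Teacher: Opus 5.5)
Your proposal is correct and is essentially the paper's own route: the paper gives no independent argument but cites \cite{CCK,CCIT15}, and, like you, takes the presentation of $QH^*_T(\cF(m,r))$ and the norms $(\phi_\alpha(\bq),\phi_\alpha(\bq))=1/\Delta^\alpha(\bq)$ from the mirror theorem and \cite{Lan25}, while your B-side check ($\partial_yW_T=\sum_i i\tilde q_iY^i-\bp=rY^{-m}\prod_\beta(Y-P_\beta)$ and $\Res_{Y=P_\alpha}\frac{dY}{Y\,\partial_yW_T}=1/\Delta^\alpha(\bq)$) is exactly the computation the paper uses in the proof of Lemma~\ref{lma:5.1}. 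Your optional fallback via the counit is not a proof as stated (a linear functional must be matched on a full basis such as $Y^k$, $-m\le k\le r-1$, and identifying the quantum powers $X^{\star k}$ with classical data as $\bq\to0$ is itself the content of the mirror theorem), so relying on \cite{Lan25} as you propose is the right choice.
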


\subsection{Mirror curve}\label{sec:curve}
Consider the spectral curve $C_q$ defined as follows:
\[
    C_q = \{(x,y)\in\bC^2: x = W_T(e^y)\}.
\]
Let $\{P_\alpha\}_{\alpha=0}^{m+r-1}$ denote the set of critical points $x$ (i.e. $dx(P_\alpha)=0$) and let $B(Y_1,Y_2)$ be a meromorphic bidifferential on $C_q$ defined by
\[
    B(Y_1,Y_2) = \frac{dY_1dY_2}{(Y_1-Y_2)^2}.
\]
Let $\omega_{g,n}$ be the differential forms
defined recursively by the Chekhov-Eynard-Orantin topological recursion \cite{EO07}:
$$\omega_{0,1}=0,\quad \omega_{0,2}=B(Y_1,Y_2).$$
For $2g-2+n>0$,
\begin{equation*}
    \begin{aligned}
        \omega_{g,n}(Y_1,\dots,Y_n) = \sum_{\alpha=0}^{m+r-1}
        &\Res_{Y\rightarrow P_\alpha}\frac{\int_{\xi=Y}^{\hat{Y}}B(Y_n,\xi)}{2(\log(Y)-\log(\hat{Y}))dx}\Big(
            \omega_{g-1,n+1}(Y,\hat{Y},Y_1,\dots,Y_{n-1})
        \\
        &+ 
            \sum_{g_1+g_2=g}\sum_{I\sqcup J = \{1,\dots,n-1\}}\omega_{g_1,|I|+1}(Y,Y_{I})\omega_{g_2,|J|+1}(\hat{Y},Y_J)
        \Big),
    \end{aligned}
\end{equation*}
where $Y\neq P_\alpha$ is in a neighbourhood of $P_\alpha$, and $\hat{Y}\neq Y$ is the conjugate point of $Y$ such that $x(\hat{Y})=x(Y)$.

Near any critical point $v^\alpha(=\log{P_{\alpha}})$, we choose local coordinates $\zeta_\alpha$ and parameters $h_k^\alpha$ such that
$$x=u^\alpha+\zeta_\alpha^2, \quad y=v^\alpha+\sum_{k=1}^\infty h_k^\alpha \zeta_\alpha^k.$$
Let $\lambda = xdy$ be the Liouville form on $\bC^2$, $\Phi:=\lambda|_{C_q}$. Around $(P_\alpha, P_\beta)$, we expand $B(Y_1,Y_2)$ as
\[B(\zeta_\alpha,\zeta_\beta)=\Big( \frac{\delta_{\alpha\beta}}{(\zeta_\alpha -\zeta_\beta)^2}+ \sum_{k,\ell\geq 0} B_{k,\ell}^{\alpha,\beta}\zeta_\alpha^k\zeta_\beta^\ell\Big) d\zeta_\alpha\otimes d\zeta_\beta.\]

We introduce the following notations:
\begin{enumerate}
    \item Given $\alpha \in\{0,\dots,m+r-1\}$ and $k\in \bZ_{\geq 0}$, define
        \[
            d\xi^\alpha_k=-(2k-1)!!2^{-k}\Res_{P'\mapsto P_\alpha}B(P,P')\zeta_\alpha^{-2k-1},
        \]
        and let $\xi_{\alpha,0}$ be a function defined by $$\xi_{\alpha,0}=\sqrt{\frac{2}{\Delta^\alpha(\bq)}}\frac{P_\alpha}{Y-P_\alpha}.$$
        Note that $d\xi_{\alpha,0}=d\xi^\alpha_0$.
    \item Given $\alpha \in\{0,\dots,m+r-1\}$ and $k\in \bZ_{\geq 0}$, define    
        \[
            W^\alpha_k := d((-\frac{d}{dx})^k(\xi_{\alpha,0})),
        \]
        \[
            \theta_\alpha(z) := \sum_{k=0}^{\infty}d\xi^\alpha_kz^k,\quad
            \hat\theta_\alpha(z):= \sum_{k=0}^\infty W^\alpha_kz^k.
        \]
    \item The B-model $R$-matrix $\check{R}_\beta^{\ \alpha}(z)$ (which is a power series of $z$) is defined by asymptotic expansion
        \[
            \check{R}_\beta^{\ \alpha}(z) \sim \frac{\sqrt{-z}e^{-\check{u}^\alpha/z}}{2\sqrt{\pi}}\int_{\gamma_\alpha}e^{x/z}d\xi^{\beta}_{0}.
        \]
        Here $\gamma_\alpha$ is the Lefschetz thimble of the map $x$, i.e. $x(\gamma_\alpha)-\check{u}^\alpha \in \bR_{\geq 0}$.
    \item Given $\alpha,\beta\in\{0,\dots,m+r-1\}$, $k,l\geq 0$, define
        \begin{equation*}
            \begin{aligned}
                \check{B}^{\alpha,\beta}_{k,l} &:= \frac{(2k-1)!!(2l-1)!!}{2^{k+l+1}}B^{\alpha,\beta}_{2k,2l}
                \\
                &\ = 
                [z^kw^l]\Big(\frac{1}{z+w}\Big(\delta_{\alpha\beta}-\sum_{\gamma\in\{0,\dots,m+r-1\}}\check{R}_\gamma^{\ \alpha}(-z)\check{R}_\gamma^{\ \beta}(-w)\Big)\Big).
            \end{aligned}
        \end{equation*}
    \item Given $\alpha\in\{0,\dots,m+r-1\}$ and $k\in \bZ_{\geq 1}$, define     
        \begin{equation*}
            \begin{aligned}
                \check{h}^\alpha_k &:= -\frac{(2k-1)!!}{2^{k-1}}h^\alpha_{2k-1}
                \\
                & \ = [z^{k-1}]\Big(\sum_\beta h^\beta_1\check{R}_\beta^{\ \alpha}(-z)\Big).
            \end{aligned}
        \end{equation*}
\end{enumerate}

The B-model invariants $\omega_{g,n}$ can be expressed as graph sums. Given a labeled graph $\vec{\Gamma}\in \mathbf{ \Gamma}_{g,n}(\cF(m,r))$ with $L^o(\Gamma)=\{\ell_1,\cdots,\ell_n\}$. 
Define the vertex factors to be $$\check{\mathcal{V}}^{\beta(v)}_{g(v)}(v)=\Big(\dfrac{h_1^{\beta(v)}}{\sqrt{2}} \Big)^{2-2g(v)-\mathrm{val}(v)} \big<\dprod_{h\in H(v)} \tau_{k(h)}\big>_{g(v)}.$$
We define its weight to be
\[
\begin{split}
\check{w}_B(\vec{\Gamma}) = &\  \dprod_{v\in V (\Gamma)} \check{\mathcal{V}}^{\beta(v)}_{g(v)}(v)
\dprod_{e\in E(\Gamma)} \check{B}_{k(e),\ell(e)}^{\beta(v_1(e)),\beta(v_2(e))} (e) \\ & \cdot \dprod_{j=1}^n \dfrac{-1}{\sqrt{2}}d\xi_{k(\ell_j)}^{\beta(\ell_j)}(Y_j) \dprod_{\ell\in L^1(\Gamma)} \dfrac{-1}{\sqrt{2}}\check{h}_{k(\ell)}^{\beta(\ell)}.
\end{split}
\]

We cite here the Theorem 3.7 in \cite{DOSS}.
\begin{theorem}\label{thm:DOSS}
For $2g-2+n>0$, it holds that
\[
\omega_{g,n}=\dsum_{\vec{\Gamma}\in \mathbf{\Gamma}_{g,n}(\cF(m,r))} \dfrac{\check{w}_B(\vec{\Gamma})}{|\mathrm{Aut}(\vec{\Gamma})|}.
\]
\end{theorem}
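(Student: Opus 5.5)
The statement is the specialization to the mirror curve $C_q$ of the general theorem of Dunin-Barkowski--Orantin--Shadrin--Spitz (and Eynard). That theorem says that Chekhov--Eynard--Orantin invariants of any spectral curve with simple ramification points are computed by a Givental-type graph sum. So the plan is to check that $C_q$, together with the data $(x,y,B)$ fixed above, satisfies the hypotheses of \cite[Theorem 3.7]{DOSS}. We then match the abstract graph-sum ingredients with the quantities $\check{\mathcal V}$, $\check B^{\alpha,\beta}_{k,l}$, $d\xi^\alpha_k$, $\check h^\alpha_k$ defined in the preceding paragraphs.

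\textbf{Step 1: hypotheses on $C_q$.} I would check the following.
\begin{enumerate}
\item Via $Y=e^y$, the curve $C_q$ is a smooth curve parametrized by $Y\in\bC^*$.
\item $x=W_T(Y)$ is a holomorphic function on it.
\item The zeros of $dx$ are exactly the critical points $P_\alpha$ of $W_T$, $\alpha=0,\dots,m+r-1$.
\end{enumerate}
For generic $\bq$ and $\bw$ these critical points are nondegenerate, i.e.\ simple ramification points of $x$. This follows because $\partial W_T/\partial y=\sum_i i\tilde q_iY^i-\bp$ has $m+r$ distinct roots $z_\alpha$, which is the semisimplicity already used in Section~\ref{sec:A-canonical} through Proposition~\ref{prop:Frobenius}. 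Hence the local coordinates $\zeta_\alpha$ with $x=u^\alpha+\zeta_\alpha^2$ exist, and $y$ has an expansion with $h^\alpha_1\neq 0$. The bidifferential $B$ is the standard Bergman kernel on the genus-zero curve parametrized by $Y$. The recursion kernel uses $(\log Y-\log\hat Y)\,dx=(y-\hat y)\,dx$, so the recursion written above is exactly the local CEO recursion with $\omega_{0,1}=y\,dx$. The substitution $\omega_{0,1}=0$ in the display is harmless, since only the difference $y-\hat y$ enters.

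\textbf{Step 2: matching the graph-sum data.} The DOSS theorem expresses $\omega_{g,n}$ through the following pieces:
\begin{enumerate}
\item vertex factors $\langle\prod\tau_{k(h)}\rangle_g$ weighted by powers of $h^\alpha_1/\sqrt2$;
\item edge factors given by the even--even coefficients $B^{\alpha,\beta}_{2k,2l}$ of the regular part of $B$, normalized by $(2k-1)!!(2l-1)!!/2^{k+l+1}$;
\item ordinary leaves given by $d\xi^\alpha_k$;
\item dilaton leaves given by the odd coefficients $h^\alpha_{2k-1}$ of $y$.
\end{enumerate}
These are by definition our $\check{\mathcal V}$, $\check B^{\alpha,\beta}_{k,l}$, $-\tfrac1{\sqrt2}d\xi^\alpha_k$ and $-\tfrac1{\sqrt2}\check h^\alpha_k$. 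So the graph-sum formula follows once the normalizations and the signs $-1/\sqrt2$ are checked against DOSS's conventions.

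\textbf{Step 3: the $\check R$ identities.} The two alternative expressions for $\check B$ and $\check h$ in terms of $\check R$ must be verified. I would use the Laplace transform of $B$ along pairs of Lefschetz thimbles $\gamma_\alpha\times\gamma_\beta$. On the one hand, $\frac{zw}{z+w}$ times this Laplace transform reproduces $\delta_{\alpha\beta}-\sum_\gamma\check R_\gamma^{\ \alpha}(-z)\check R_\gamma^{\ \beta}(-w)$, via Eynard's identity
\[
\int\!\!\int e^{x_1/z+x_2/w}B=\frac{zw}{z+w}\int e^{x/z}\,d\xi_0\cdots .
\]
On the other hand, evaluating the same transform by stationary phase picks out exactly the even coefficients $B^{\alpha,\beta}_{2k,2l}$. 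The identity for $\check h$ follows similarly from integrating by parts $\int_{\gamma_\alpha}e^{x/z}\,dy$.

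I expect the main obstacle to be convergence and justification of these thimble integrals. The log terms in $W_T$ make $x$ multivalued on $\bC^*$, so one must check that the thimbles are well defined and the asymptotic expansions are legitimate; I would argue this for $\bw$ in a suitable real cone and extend by analyticity. Everything else is bookkeeping identical to the $\bP^1$ case in \cite{FLZ17}.
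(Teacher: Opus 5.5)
Your proposal is correct and is essentially what the paper does. The paper simply applies \cite[Theorem 3.7]{DOSS} to the spectral curve $C_q$, whose ramification points are the (generically simple) zeros $P_\alpha$ of $dx$, and reads off the weights $\check{\mathcal{V}}$, $\check B^{\alpha,\beta}_{k,l}$, $d\xi^\alpha_k$, $\check h^\alpha_k$ from the local expansions as in your Steps 1--2. Your Step 3 is not needed here: the DOSS graph sum uses only the defining expressions through $B^{\alpha,\beta}_{2k,2l}$ and $h^\alpha_{2k-1}$, so the thimble-convergence issue you flag does not arise, and the $\check R$-expressions are separate identities used later for the comparison with the A-model.
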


\subsection{B-model open potentials}\label{sec} The function $X=e^{-x/\bp}$ has an essential singularity at $Y=0$:
\begin{equation*}
  \begin{aligned}
    X=e^{-x/\bp} =  Y\prod_{l=-m}^{r-1}\tilde{q}_l^{\frac{\bw_l}{\bp}}\exp\left\{-\Big(Y^r + \sum_{l=1}^{r-1} \tilde{q}_lY^l + \tilde{q}_0 + \sum_{l=1}^{m} \tilde{q}_{-l}Y^{-l}\Big)\Big/\bp\right\}.
  \end{aligned}
\end{equation*}
Let $D_\epsilon$ be a punctured disk around $Y=0$: 
\[
    D_\epsilon := \{Y\in\bC^*: 0<|Y|<\ep\},
\]
where $\ep$ is a small positive real number such that $D_\epsilon$ does not contain
any ramification points $\{P_\alpha\}_{\alpha=0}^{m+r-1}$.
In the punctured disk $D_\epsilon$, $Y$ can be expanded as a power series of $X$ by the Lagrange inversion theorem.
At $Y=0$, we define the operator $\fh^\circ_X$ and $\fh^\bullet_X$ that expand holomorphic functions $f(Y)$ and holomorphic differential forms 
$\theta(Y)$ on $D_\epsilon$ into Laurent series in $X$, based on their local behavior at $Y=0$:
\begin{itemize}
    \item For a holomorphic function $f(Y)$ on $D_\epsilon$, define $\fh^\circ_X:\cO(D_\ep)\rightarrow \bC[\![X^{\pm 1}]\!]$ by
    \[
        \fh^\circ_X(f) := \sum_{\mu\in\bZ_{\neq 0}} \Big(\mathop{\Res}\limits_{Y\rightarrow 0}f(Y)X^{-\mu}\frac{dX}{X}\Big)X^{\mu}.
    \]
    \item For a holomorphic differential form $\theta(Y)$ on $D_\epsilon$, define $\fh^\bullet_X:\Omega^1(D_\ep)\rightarrow \bC[\![X^{\pm 1}]\!]$ by
    \[     
        \fh_X^\bullet(\theta) := \sum_{\mu\in\bZ_{\neq 0}} \Big(\mathop{\Res}\limits_{Y\rightarrow 0} \theta(Y) X^{-\mu}\Big)\frac{X^{\mu}}{\mu}.
    \]
    If $f$ is a single-valued holomorphic function on $D_\ep$ such that $\theta(Y)=df$, then $\fh_X^\bullet(\theta)=\fh_X^\circ(f)$ by the integrations by parts.
\end{itemize}
For multi-holomorphic functions (\emph{resp.} forms) on $(D_\epsilon)^{\times n}$, we define $\fh_{X_1,\dots,X_n}^{\circ}$ and $\fh_{X_1,\dots,X_n}^\bullet$ as follows:
\begin{itemize}
    \item Let $f(Y_1,\dots,Y_n)$ be a holomorphic function on $(D_\epsilon)^{\times n}$, we define
    \[
        \fh^\circ_{X_1,\dots,X_n}(f) := \sum_{\mu_1,\dots,\mu_n\in\bZ_{\neq 0}}
        \Big(
            \mathop{\Res}\limits_{Y_1\rightarrow 0}\cdots \mathop{\Res}\limits_{Y_n\rightarrow 0}f(Y_1,\dots,Y_n)\prod_{i=1}^{n}X^{-\mu_i}_i\frac{dX_i}{X_i}
        \Big)\prod_{i=1}^{n}X^{\mu_i}_i.
    \]
    \item Let $\theta(Y_1,\dots,Y_n)$
    be a holomorphic differential form from the set of sections $\Gamma((D_\ep)^{\times n}, \omega_{C_q}^{\boxtimes n})$, we define
    \[  
        \fh^\bullet_{X_1,\dots,X_n}(\theta) := \sum_{\mu_1,\dots,\mu_n\in\bZ_{\neq 0}}
        \Big(
            \mathop{\Res}\limits_{Y_1\rightarrow 0}\cdots \mathop{\Res}\limits_{Y_n\rightarrow 0}\theta(Y_1,\dots,Y_n)\prod_{i=1}^{n}X^{-\mu_i}_i
        \Big)\prod_{i=1}^{n}\frac{X^{\mu_i}_i}{\mu_i}.
    \] 
\end{itemize}

We define the B-model open potentials as follows,
\begin{itemize}
    \item Define the B-model disk potential $W_{0,1}(\bold{q},X)$ as a Laurent series in $X$ with constant term zero such that
        \[
            \Big(\frac{1}{\bp}X\frac{d}{d X}\Big)W_{0,1}(\bold{q},X) = \fh_X^\bullet\Big(\frac{dY}{Y}\Big). 
        \]
    \item Let $\tilde{\omega}_{0,2}(Y_1,Y_2)$ be the meromorphic 2-form on $\bC^*\times \bC^*$,
    $$\tilde{\omega}_{0,2}(Y_1,Y_2):=\omega_{0,2}(Y_1,Y_2)-\frac{dX_1dX_2}{(X_1-X_2)^2}.$$
    Define the B-model annulus invariants by
        \[
            W_{0,2}(\bold{q},X_1,X_2) = \fh^\bullet_{X_1,X_2}(\tilde{\omega}_{0,2}(Y_1,Y_2)),
        \]
    where $\omega_{0,2}(Y_1,Y_2)$ is expanded as a series in the region $|Y_1|>|Y_2|$, and $\frac{dX_1dX_2}{(X_1-X_2)^2}$ is expanded in the region $|X_1|>|X_2|$.
    \item For $2g-2+n>0$, $\omega_{g,n}$ is holomorphic on $(D_\ep)^{\times n}$, we define
        \[
            W_{g,n}(\bold{q}, X_1,\dots,X_n) = \fh^\bullet_{X_1,\dots,X_n}(\omega_{g,n}).
        \]
\end{itemize}
For a labeled graph $\vec{\Ga}\in{\bf\Ga}_{g,n}(\cF(m,r))$ with $L^o(\Ga) = \{l_1,\dots,l_n\}$, we introduce the B-model open leaves
as 
\[
    (\check{\cL}^O)^{\beta(l_j)}_{k(l_j)}(l_j) = \frac{-1}{\sqrt{2}}\fh^\bullet_{X_j}(d\xi_{k(l_j)}^{\beta(l_j)}(Y_j)).
\]  
We assign the B-model open weights of $\vec\Ga$ as
\[
\begin{split}
\check{w}_B^O(\vec{\Gamma}) = &\ \dprod_{v\in V (\Gamma)} \check{\mathcal{V}}^{\beta(v)}_{g(v)}(v)
\dprod_{e\in E(\Gamma)} \check{B}_{k(e),\ell(e)}^{\beta(v_1(e)),\beta(v_2(e))} (e) \\ & \cdot \dprod_{j=1}^n \frac{-1}{\sqrt{2}}\fh_{X_j}^\bullet(d\xi_{k(l_j)}^{\beta(l_j)}(Y_j)) \dprod_{\ell\in L^1(\Gamma)}\dfrac{-1}{\sqrt{2}} \check{h}_{k(\ell)}^{\beta(\ell)}.
\end{split}
\]
Following Theorem \ref{thm:DOSS}, we have
\begin{theorem} \label{thm:B-graph-sum}
  For $2g-2+n>0$, it holds that
  \[
      W_{g,n}(\bold{q},X_1,\dots,X_n) = \dsum_{\vec{\Gamma}\in \mathbf{\Gamma}_{g,n}(\cF(m,r))} \dfrac{\check{w}_B^{O}(\vec{\Gamma})}{|\mathrm{Aut}(\vec{\Gamma})|}.
  \]
\end{theorem}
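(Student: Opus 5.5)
The plan is to apply the linear operator $\fh^\bullet_{X_1,\dots,X_n}$ termwise to the graph sum expression for $\omega_{g,n}$ in Theorem \ref{thm:DOSS}. By definition, for $2g-2+n>0$ we have $W_{g,n}=\fh^\bullet_{X_1,\dots,X_n}(\omega_{g,n})$. Only the ordinary leaf factors $\frac{-1}{\sqrt{2}}d\xi^{\beta(l_j)}_{k(l_j)}(Y_j)$ in $\check{w}_B(\vec\Ga)$ depend on the variables $Y_j$. The vertex factors $\check{\mathcal V}$, the edge factors $\check B^{\alpha,\beta}_{k,l}$ and the dilaton leaf factors $\check h^\beta_k$ are constants, being functions of $\bq$ only.

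Two facts are needed before the termwise application is legitimate. First, the sum in Theorem \ref{thm:DOSS} is finite for fixed $(g,n)$. Stability together with the degree constraint $\sum_h k(h)\le 3g(v)-3+\val(v)$ on the nonvanishing vertex integrals bounds both the number of graphs and the heights. Second, each leaf form is holomorphic on $D_\ep$. The form $d\xi^\alpha_k(Y)$ is obtained as a residue of $B$ at $P_\alpha$, so it is a meromorphic form on $\bC^*$ with poles only at the ramification point $P_\alpha$. Since $D_\ep$ contains no ramification points, $d\xi^\alpha_k$ is holomorphic there.

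With these in hand, the product $\prod_j d\xi^{\beta(l_j)}_{k(l_j)}(Y_j)$ is a section of $\omega_{C_q}^{\boxtimes n}$ on $(D_\ep)^{\times n}$ that factors as a product of one-variable forms. The iterated residue in the definition of $\fh^\bullet_{X_1,\dots,X_n}$ therefore factors as a product of the one-variable residues:
\[
\fh^\bullet_{X_1,\dots,X_n}\Big(\prod_{j=1}^n d\xi^{\beta(l_j)}_{k(l_j)}(Y_j)\Big)=\prod_{j=1}^n \fh^\bullet_{X_j}\big(d\xi^{\beta(l_j)}_{k(l_j)}(Y_j)\big).
\]
Applying this and pulling out the constant factors replaces each leaf factor $\frac{-1}{\sqrt2}d\xi$ by $(\check\cL^O)^{\beta(l_j)}_{k(l_j)}(l_j)$. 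This turns $\check w_B(\vec\Ga)$ into $\check w_B^O(\vec\Ga)$, and the automorphism factors are unchanged, which gives the stated formula.

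The only delicate point is holomorphy, which the definition of $W_{g,n}$ itself asserts. One also needs the residue at $Y=0$ to make sense despite the essential singularity of $X$. Here one should check that the Lagrange inversion expansion of $Y$ in $X$ is compatible across the factors, so that the multivariable residue really is the product of the single-variable ones.
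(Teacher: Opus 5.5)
Your proposal is correct and matches the paper's argument, which obtains Theorem \ref{thm:B-graph-sum} simply by applying the linear operator $\fh^\bullet_{X_1,\dots,X_n}$ to the graph sum of Theorem \ref{thm:DOSS}, so that only the ordinary-leaf factors change; your finiteness, holomorphy and factorization remarks just make this explicit. The closing worry about Lagrange inversion is unnecessary: $\fh^\bullet$ is defined through residues at $Y=0$, and the iterated residue of $\prod_j d\xi^{\beta(l_j)}_{k(l_j)}(Y_j)X_j^{-\mu_j}$ factors directly into a product of one-variable residues.
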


\subsection{Explicit B-model disk potential}\label{sec:B-explicit-disk}
In this subsection, we compute the B-model disk potential explicitly. Let 
$$\Phi_0 = \frac{\tilde{q}_0}{\tilde{q}_0-{\bf w}_0}\frac{\partial\Phi}{\partial \tilde{q}_0}=\frac{dY}{Y}.$$
and let 
\[
  \fh^\bullet_X(\Phi_0) = \sum_{\mu\in\bZ_{\neq 0}} R_\mu X^\mu.
\]

Consider the residue
\begin{equation*}
  \begin{aligned}
    R_\mu &= \frac{1}{\mu}\Res_{Y=0} X^{-\mu}\frac{dY}{Y}
    \\
    &= [Y^{\mu}]\frac{e^{\mu\tilde{q}_0/\bp}}{\mu}\prod_{l=-m}^{r-1}\tilde{q}_l^{-\mu\frac{\bw_l}{\bp}}\cdot\exp\left\{\mu\Big(Y^r + \sum_{l=1}^{r-1} \tilde{q}_lY^l + \sum_{l=1}^{m} \tilde{q}_{-l}Y^{-l}\Big)/\bp\right\}
    \\
    &= \frac{e^{\mu\tilde{q}_0/\bp}}{\mu}\prod_{l=-m}^{r-1}\tilde{q}_l^{-\mu\frac{\bw_l}{\bp}}\cdot\sum_{a_l\in\bZ_{\geq 0}\atop{l=-m,\dots,-1, 1,\dots,r\atop{\sum_{l=1}^{r}la_l}-\sum_{l=1}^mla_{-l}=\mu}}\frac{(\frac{\mu}{\bp})^{\sum_{l=1}^r a_l +\sum_{l=1}^ma_{-l}}\prod_{l=1}^{r-1}\tilde{q}_l^{a_l}\prod_{l=1}^{m}\tilde{q}_{-l}^{a_{-l}}}{\prod_{l=1}^{r}a_l!\prod_{l=1}^{m}a_{-l}!}
    \\
    &= \frac{1}{\mu}\prod_{l=-m}^{r-1}\tilde{q}_l^{-\mu\frac{\bw_l}{\bp}}\cdot\sum_{a_l\in\bZ_{\geq 0}, l\in I \atop{{\sum_{l=1}^{r}la_l}-\sum_{l=1}^mla_{-l}=\mu}}\frac{(\frac{\mu}{\bp})^{\sum_{l\in I} a_l}\prod_{l=-m}^{r-1}\tilde{q}_l^{a_l}}{\prod_{l\in I}a_l!},
  \end{aligned}
\end{equation*}
where $I$ is the index set of integers from $-m$ to $r$.

Let 
\begin{equation*}
  \begin{aligned}
    &\tilde{q}_i = q_i, \quad 0 \leq i \leq r-1
    \\
    &\tilde{q}_i = q_*^{-i}q_i,\quad -m+1\leq i\leq -1 
    \\
    &\tilde{q}_{-m} = q_*^m,
  \end{aligned}
\end{equation*}
then we have
\begin{eqnarray*}
      \prod_{l=-m}^{r-1}\tilde{q}_l^{-\mu\frac{\bw_l}{\bp}} &=& q_*^{-\frac{\mu\bp_-}{\bp}}\prod_{l=-m+1}^{r-1}q_l^{-\mu\frac{\bw_l}{\bp}},
      \\
  \prod_{l=-m}^{r-1}\tilde{q}_l^{a_l} &=& q_*^{\sum_{i=1}^{m}ia_{-i}}\prod_{l=-m+1}^{r-1}q_l^{a_l}.
\end{eqnarray*}
Given a positive integer $\mu >0$, we assume
$\mu = rk+h$, where $0\leq h\leq r-1$ and $k\geq 0$, then
\[
  R_\mu = \sum_{a_l\in\bZ_{\geq 0}, l\in I\atop {{\sum_{l=1}^{r}la_l}-\sum_{l=1}^mla_{-l}=rk+h}}\frac{1}{\mu}q_*^{-\frac{\mu\bp_-}{\bp}+ \sum_{i=1}^{m}ia_{-i}}\prod_{l=-m+1}^{r-1}q_l^{-\mu\frac{\bw_l}{\bp}+a_l}
  \frac{(\frac{\mu}{\bp})^{\sum_{l\in I}a_l}}{\prod_{l\in I}a_l!}.
\]
By the bijection:
\begin{equation*}
  \begin{aligned}
      &a_{-m} \rightarrow \tilde{d}_{-m},
      \\
      &a_i \rightarrow d_i, \quad -m+1\leq i \leq r-1,
      \\
      &a_r \rightarrow k+\lceil \tilde{d}_r\rceil,
  \end{aligned}
\end{equation*}
we get the following formula
\begin{equation*}
  \begin{aligned}
    R_\mu = \sum_{d\in \bK_{\eff,r}\atop{\mu = rk+h, \atop{ r\{-\tilde{d}_r\} = h}}}\frac{1}{\mu}q_*^{-\frac{\mu\bp_-}{\bp}}(\prod_{l\in I_0}q_l^{-\mu\frac{\bw_l}{\bp}})
    \bq^d 
  \frac{(\frac{\mu}{\bp})^{k+\lceil\deg d\rceil}}{\tilde{d}_{-m}!\prod_{l\in I_0}d_l!(k+\lceil\tilde{d}_r\rceil)!}. 
  \end{aligned}
\end{equation*}
Similarly, given a negative integer $\mu<0$, we assume
$\mu = -mk+h$, where $-m+1\leq h\leq 0$ and $k\geq 0$, we have
\[
  R_\mu = \sum_{a_l\in\bZ_{\geq 0}, l\in I\atop {{\sum_{l=1}^{r}la_l}-\sum_{l=1}^mla_{-l}=-mk+h}}\frac{1}{\mu}q_*^{-\frac{\mu\bp_-}{\bp}+ \sum_{i=1}^{m}ia_{-i}}\prod_{l=-m+1}^{r-1}q_l^{-\mu\frac{\bw_l}{\bp}+a_l}
  \frac{(\frac{\mu}{\bp})^{\sum_{l\in I}a_l}}{\prod_{l\in I}a_l!} .
\]
By the bijection:
\begin{equation*}
  \begin{aligned}
      &a_{-m} \rightarrow k+\lceil \tilde{d}_{-m}\rceil,
      \\
      &a_i \rightarrow d_i, \quad -m+1\leq i \leq r-1,
      \\
      &a_r \rightarrow \tilde{d}_{r},
  \end{aligned}
\end{equation*}
we have
\begin{equation*}
  \begin{aligned}
    R_\mu = \sum_{d\in \bK_{\eff,-m}\atop{\mu = -mk+h, \atop{m\{-\tilde{d}_{-m}\} = -h}}}\frac{1}{\mu}q_*^{-\frac{\mu\bp_+}{\bp}}(\prod_{l\in I_0}q_l^{-\mu\frac{\bw_l}{\bp}})
    \bq^d 
  \frac{(\frac{\mu}{\bp})^{k+\lceil\deg d\rceil}}{\tilde{d}_{r}!\prod_{l\in I_0}d_l!(k+\lceil\tilde{d}_{-m}\rceil)!}. 
  \end{aligned}
\end{equation*}
This uses the fact:
\begin{equation*}
    \sum_{i=1}^{m-1}id_{-i}+mk+m\lceil\tilde{d}_{-m}\rceil=d_*-\mu.
\end{equation*}

\section{Mirror Symmetry}\label{sec:MS}
In this section, we establish the open mirror symmetry for $\cF(m,r)$. We use $\tX_{i}$ in A-model and $X_{i}$ in B-model. In equations relating A-model and B-model, these variables are identified, i.e. $\tX_{i}=X_{i}$.
\subsection{Identification of open leaves}

\begin{theorem}\label{thm:disk}
    $F_{0,1}(\tau(\bq),1,\tX)=W_{0,1}(\bold{q},X)$
\end{theorem}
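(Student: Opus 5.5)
We will compare the two sides coefficient by coefficient in $X^\mu$, $\mu\in\bZ_{\neq 0}$, using the explicit formulas already obtained in Section \ref{sec:A-explicit-disk} and Section \ref{sec:B-explicit-disk}. Both sides are Laurent series in $X$ with no constant term. So it suffices to show
\[
\Big(\tfrac{1}{\bp}X\tfrac{d}{dX}\Big)F_{0,1}(\tau(\bq),1;\tX)=\fh^\bullet_X\Big(\frac{dY}{Y}\Big)=\sum_{\mu\neq0}R_\mu X^\mu .
\]
Equivalently, $\tfrac{\mu}{\bp}[X^\mu]F_{0,1}=R_\mu$ for every $\mu\neq 0$.

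On the A-side we use the decomposition $F_{0,1}=\sum_h F^{+,h}_{0,1}+\sum_h F^{-,h}_{0,1}$. This decomposition rests on Theorem \ref{thm:open-descendant} and on the mirror theorem $I^T(\bq,z)=J^T(\tau(\bq),z)$. The latter lets us replace $\llangle\frac{\one^*_{h,r}}{z-\psi}\rrangle_{0,1}$ at $z=\bp/\mu$ by $I_{+,h}(\bq,\bp/\mu)$, and similarly on the negative side.

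The key bookkeeping point is the $z$-power. The summand of $F^{+,h}_{0,1}$ carries $(\mu/\bp)^{k-2+\lceil\deg d\rceil}$ together with the prefactor $1/\bp$. We check this against $D^2(\mu)$, which contributes $(\mu/\bp)^{k-2}$ times $(r/\bp)^{1-\delta_{0,h}}$. The factor $r^{1-\delta_{0,h}}$ is cancelled by the normalization of $\one_{h,r}$, and the Gamma-ratio collapses to $1/(k+\lceil\tilde d_r\rceil)!$. Multiplying by $\mu/\bp$ turns the coefficient into
\[
\frac1\mu\, q_*^{-\mu\bp_-/\bp}\prod_{l\in I_0}q_l^{-\mu\bw_l/\bp}\,\bq^d\,\frac{(\mu/\bp)^{k+\lceil\deg d\rceil}}{\tilde d_{-m}!\prod_{l\in I_0}d_l!\,(k+\lceil\tilde d_r\rceil)!}.
\]
This is exactly the summand in the final formula for $R_\mu$ with $\mu=rk+h>0$. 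The remaining step is to verify that the index sets agree. The substitution $a_{-m}=\tilde d_{-m}$, $a_i=d_i$, $a_r=k+\lceil\tilde d_r\rceil$ is a bijection between $\{a_l\ge0:\sum_{l>0}la_l-\sum_{l>0}la_{-l}=\mu\}$ and $\{d\in\bK_{\eff,r}: r\{-\tilde d_r\}=h\}$. Likewise, $\sum_{l\in I}a_l=k+\lceil\deg d\rceil$. Both identities follow from the definitions of $\tilde d_{\pm}$ and $\deg d$, using $d_*=\sum_i ia_{-i}$ (the exponent of $q_*$). For $\mu<0$ we argue in the same way, with $\mu=-mk+h$, $D^1$, $I_{-,h}$, the sign from $(-\bp)^{\delta_{0,h}}$ and $(-m/\bp)^{1-\delta_{0,h}}(-1)^{\lfloor\cdot\rfloor-1}$, and the stated identity $\sum_{i}id_{-i}+mk+m\lceil\tilde d_{-m}\rceil=d_*-\mu$.

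I expect the main obstacle to be the exact matching of the powers of $\mu/\bp$ and of the signs in the negative winding case. In particular, one must check that $\lceil\deg d\rceil$ combines correctly with the fractional parts $\{-\tilde d_r\}$, and that for $h=0$ the pairing normalization $\bp^{\delta_{0,h}}$ replaces the $r$ factor consistently. I would handle this first for $h=0$ and then for $h\neq0$, writing $\deg d=\tilde d_{-m}+\tilde d_r+\sum d_i$ and noting that $\lceil\tilde d_r\rceil=\tilde d_r+h/r$. A further point to settle is that the convergence and branch of $q^{-\mu\bw/\bp}$ agree on both sides. This matches term by term, since $\tilde q_l$ was expressed in terms of $q_l$ and $q_*$ precisely for this purpose.
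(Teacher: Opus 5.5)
Your proposal is correct and follows the paper's own argument. The paper proves the theorem by noting that $\fh^\bullet_X(\Phi_0)=\big(\frac{1}{\bp}\tX\frac{d}{d\tX}\big)F_{0,1}(\tau(\bq),1;\tX)$, which it obtains by comparing the explicit A-model disk formulas with the B-model residues $R_\mu$ through the bijection $a_{-m}=\tilde d_{-m}$, $a_i=d_i$, $a_r=k+\lceil\tilde d_r\rceil$ and its analogue for $\mu<0$. This is exactly the comparison you carry out. The bookkeeping you should tighten is this: the $1/k!$ in $D^2(\mu)$ cancels the numerator $\Gamma(k+1)$ of the Gamma ratio, and your bijection reaches only those $d$ with $k+\lceil\tilde d_r\rceil\geq 0$; the remaining A-model terms vanish because $1/\Gamma(k+\lceil\tilde d_r\rceil+1)=0$ there.
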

\begin{proof}
    We use the fact:
    \begin{equation*}
        \fh^\bullet_{X}(\Phi_{0})=(\frac{1}{\bp}\tX\frac{d}{d\tX})F_{0,1}(\tau(\bq),1;\tX),
    \end{equation*}
    which follows from Section \ref{sec:A-explicit-disk} and Section \ref{sec:B-explicit-disk}.
\end{proof}
We define:
\[
    U^{\hat\beta}(z)(\tau(\bold{q}),\tX) := \sum_{\alpha} \tilde{\xi}^\alpha(z,\tX)(\hat\phi_{\beta}(\bq),S(\phi_\alpha)(z))\Big|_{Q=1}.
\]
For $m\in\ZZ_{\geq -2}$, 
\[
    \left(\frac{1}{\bp}\tX\frac{d}{d\tX}\right)[z^{m}](U^{\hat\beta}(z)(\tau(\bold{q}),\tX)) = [z^{m+1}](U^{\hat\beta}(z)(\tau(\bold{q}),\tX)).
\]
From the definition above, we have:
    \begin{equation*}
       \frac{\partial F_{0,1}(\tau(\bold{q}),1;\tX)}{\partial \tilde{q}_{i}}=\dsum_{\alpha}((\bold{1}_{1})^{\star i}-\frac{\bw_{i}}{\tilde{q}_{i}},S(\phi_{\alpha})(z))\tilde{\xi}^{\alpha}(z,\tX)[z^{-1}]\Big|_{Q=1}
    \end{equation*}
    for $i=-m,\dots,r-1$.
On the B-model side, we have:
\begin{lma}\label{lma:5.1}
Let $\{\bu^{\hat\alpha}(\bq)\}$ be the coordinates with respect to normalized canonical basis $\hat{\phi}_{\alpha}(\bq)$, we have:
    \begin{equation*}
    \frac{\partial\fh^\bullet_{X}(\Phi_{0})}{\partial u^{\hat\alpha}(\bq)}=\fh^\circ_{X}(\frac{-1}{\sqrt{2}}\xi_{\alpha,0}).
\end{equation*}
\end{lma}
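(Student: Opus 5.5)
The derivative is computed by moving it inside the residue that defines $\fh^\bullet_X$, which turns the statement into an identity between meromorphic one-forms on $C_q$ that are ruled by their pole at $Y=P_\alpha$.

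\emph{Step 1 (derivative of $\Phi_0$ at fixed $x$).} Write $\Phi_0=\frac{dY}{Y}=dy$ on $C_q$. The coefficients of $\fh^\bullet_X(\Phi_0)$ are $\frac1\mu\Res_{Y=0}X^{-\mu}\,dy$ with $X=e^{-x/\bp}$. Differentiate with respect to a parameter while keeping $X$, hence $x$, fixed. Then $\partial_{u}(dy|_{x\text{ fixed}})=d\big(\partial_u y|_x\big)$, and $\partial_u y|_x=-\partial_u W_T/\partial_y W_T=-\partial_u W_T\,\frac{dy}{dx}$. So the derivative of the coefficient equals
\[
\frac1\mu\Res_{Y=0}X^{-\mu}\,d\Big(-\partial_uW_T\,\frac{dy}{dx}\Big).
\]
Integrating by parts against $X^{-\mu}$, with $dX^{-\mu}=\frac{\mu}{\bp}X^{-\mu}dx$, this becomes $\fh^\circ_X$ applied to the function $f_u:=-\partial_u W_T\,\frac{dy}{dx}$, up to the normalization used in the definitions. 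Thus
\[
\partial_{u}\fh^\bullet_X(\Phi_0)=\fh^\circ_X\big(f_u\big),\qquad f_u=\frac{\partial_uW_T}{\partial_yW_T}\cdot(-1).
\]

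\emph{Step 2 (identify $f_u$ for canonical coordinates).} Take $u=u^{\hat\alpha}$. By Proposition \ref{prop:Frobenius}, the vector field $\partial/\partial u^{\hat\alpha}$ corresponds to $\hat\phi_\alpha(\bq)$ in $\Jac(W_T)$. Hence $\partial_{u^{\hat\alpha}}W_T$ equals a Laurent polynomial representative of $\hat\phi_\alpha=\sqrt{\Delta^\alpha}\phi_\alpha$, modulo the ideal $(\partial_yW_T)$ plus constants. The function $\phi_\alpha(Y)\cdot\frac{1}{\partial_y W_T}$, together with the Lagrange-interpolation form of $\phi_\alpha$ in $X=\one_1\leftrightarrow Y$, is a rational function of $Y$. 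Its partial-fraction decomposition has a simple pole only at $P_\alpha$, plus a polynomial part.

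The residue at $P_\alpha$ is computed from $\partial_y W_T\sim \partial^2_yW_T(P_\alpha)(y-v^\alpha)$ and $\partial_y^2W_T(P_\alpha)\propto \Delta^\alpha$. This should give
\[
-\sqrt{\Delta^\alpha}\,\frac{\phi_\alpha(Y)}{\partial_yW_T}=-\frac{1}{\sqrt{\Delta^\alpha}}\frac{P_\alpha}{Y-P_\alpha}\cdot c+(\text{terms regular on }\bC^*\text{ with no pole at }0\text{ contributing})
\]
with $c$ matching $\sqrt2\cdot\frac{1}{\sqrt2}$, i.e. $f_{u^{\hat\alpha}}=\frac{-1}{\sqrt2}\xi_{\alpha,0}+g$.

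\emph{Step 3 (discard the remainder).} Show that $\fh^\circ_X(g)=0$. The remainder $g$ comes from the ideal freedom and the polynomial part. Terms of the form $h\cdot\partial_yW_T\cdot\frac{1}{\partial_y W_T}=h$, where $h$ is a Laurent polynomial in $Y$, must be handled by the choice of representative. The plan is to choose the representative so that $\deg_Y$ stays below the degree of $\partial_yW_T$ at both ends, which makes the quotient vanish at $Y=0$ and $Y=\infty$ except for constants. Constants are killed by $\fh^\circ_X$, since $\mu\neq0$ and $\Res_{Y=0}X^{-\mu}\frac{dX}{X}=0$ by integrating an exact form. Alternatively, the contribution of the $\log$ terms in $W_T$ under $\partial_{u}$ only adds constants in $y$-derivatives.

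\emph{Main obstacle.} The hardest step is Step 2. It requires pinning down the exact constant, which uses $\Delta^\alpha=\frac{r\prod(z_\alpha-z_\beta)}{z_\alpha^{m-1}}$ against $\partial_y^2W_T(P_\alpha)=Y\frac{d}{dY}\big(\sum i\tilde q_iY^i\big)$ at $P_\alpha$. It also requires checking that the non-$P_\alpha$ part is genuinely invisible to $\fh^\circ_X$. I would first verify the constant by matching $d\xi_{\alpha,0}=d\xi^\alpha_0$ near $P_\alpha$ with $\zeta_\alpha$, using $x-u^\alpha=\zeta_\alpha^2$.
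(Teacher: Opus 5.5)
Your Step 1 matches the paper's first step, and your overall route is the paper's: identify $\partial_{u^{\hat\alpha}}W_T$ with $\hat\phi_\alpha$ via Proposition~\ref{prop:Frobenius}, then divide by $\partial_yW_T$. The gap is in Steps 2--3, where you fix $\partial_{u^{\hat\alpha}}W_T$ only ``modulo the ideal $(\partial_yW_T)$ plus constants'' and then try to discard a remainder $g$. Neither ambiguity is harmless, and the reasons you give for discarding $g$ do not hold.

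\begin{itemize}
\item Replacing $\partial_uW_T$ by $\partial_uW_T+h\,\partial_yW_T$ changes $f_u$ by $-h$, which is not killed.
\item Adding a constant $c$ to $\partial_uW_T$ changes $f_u$ by $-c/\partial_yW_T$. This is a nonzero combination of the $\xi_{\beta,0}$, not a constant, so it is not killed either. In particular the constants $-\bw_i/\tilde q_i$ coming from the $\log$ terms cannot be dropped, so your ``alternatively'' remark is wrong.
\item $\fh^\circ_X$ is the expansion in $X$ with the constant term removed, so it kills constants and nothing else. A term regular at $Y=0$ (e.g.\ $Y$ itself) is not invisible. ``Vanishing at $Y=0$ and $Y=\infty$ except for constants'' is therefore not a criterion for $\fh^\circ_X(g)=0$.
\item You cannot \emph{choose} the representative: $\partial_{u^{\hat\alpha}}W_T$ is a specific function.
\end{itemize}

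The missing observation is that there is no freedom at all. Since $\partial_{\tilde q_l}W_T=Y^l-\bw_l/\tilde q_l$ for $-m\le l\le r-1$, every $\partial_uW_T$ lies in $\mathrm{span}\{Y^{-m},\dots,Y^{r-1}\}$, with the $\log$ terms feeding the $Y^0$ coefficient. This span is a complement of the ideal, because $\partial_yW_T=\sum_{i=-m}^{r}i\tilde q_iY^i-\bp=rY^{-m}\prod_\beta(Y-P_\beta)$.

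Hence $\partial_{u^{\hat\alpha}}W_T$ is \emph{exactly} the unique representative of $\hat\phi_\alpha(\bq)$ in that span, namely
\[
\sqrt{\Delta^\alpha(\bq)}\,P_\alpha^m\prod_{\beta\neq\alpha}(Y-P_\beta)\Big/\Big(Y^m\prod_{\beta\neq\alpha}(P_\alpha-P_\beta)\Big).
\]
It takes the values $\sqrt{\Delta^\alpha}\,\delta_{\alpha\beta}$ at the $P_\beta$, and $Y^m$ times it is a polynomial of degree at most $m+r-1$.

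Dividing by $\partial_yW_T$ gives on the nose $\frac{P_\alpha}{\sqrt{\Delta^\alpha(\bq)}\,(Y-P_\alpha)}=\frac{1}{\sqrt2}\xi_{\alpha,0}$. This is a proper rational function whose only pole is at $P_\alpha$, with residue fixed by $\partial_y^2W_T(P_\alpha)=\Delta^\alpha(\bq)$, as you anticipated. So $g\equiv 0$ and Step 3 is unnecessary; this is how the paper concludes.
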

\begin{proof}
Following the definition, we have
    \begin{equation*}
        \frac{\partial \fh^\bullet_{X}\big(\Phi_{0})}{\partial \tilde{q}_{i}}
        =\fh_{X}^\circ\big(-\frac{\partial x}{\partial\tilde{q}_{i}}\cdot (\frac{\partial x}{\partial y})^{-1}\big)
        =-\fh_{X}^\circ((Y^{i}-\frac{{\bf w}_{i}}{\tilde{q}_{i}})(\frac{\partial W_{T}}{\partial y})^{-1}\big)
    \end{equation*}
    for $i=-m,\dots,r-1$.
We have:
\begin{equation*}    
    \frac{\partial\fh^\bullet_{X}\big(\Phi_{0}\big)}{\partial u^{\hat\alpha}(\bq)}
    =-\fh^\circ_{X}\Big(\frac{\phi_{\alpha}(\bq)(\Delta^{\alpha}(\bq))^{\frac{1}{2}}}{\frac{\partial W_{T}}{\partial y}}\Big),
\end{equation*}
where $\phi_\alpha(\bq)\in\Jac(W_T(Y))$ is the unique representative such that $\phi_\alpha(\bq) =\sum_{i} c_iY^i$, $i\in\{-m,\dots,r-1\}$.

By Proposition \ref{prop:Frobenius}, we know
\begin{align*}    
&\phi_\alpha(\bq) = \prod_{\beta\neq \alpha}\frac{Y-P_\beta}{P_\alpha - P_\beta}, \quad \Delta^\alpha(\bq) = \frac{r\prod_{\beta\neq \alpha}(P_\alpha - P_\beta)}{P_\alpha^{m-1}},
\\
&\frac{\partial W_T}{\partial y}  = rY^{-m}\cdot \prod_{\beta}(Y-P_\beta).
\end{align*}
Since $\prod_{\beta}(Y-P_\beta)=0$ in $\Jac(W_T(Y))$, we take the equivalent representative for $\phi_\alpha(\bq)$:
$$
    \phi_\alpha(\bq) = \frac{P_\alpha^m\phi_\alpha(\bq)}{Y^m} = \frac{P_\alpha^m \prod_{\beta\neq \alpha }(Y-P_\beta)}{Y^m\prod_{\beta\neq \alpha }(P_\alpha-P_\beta)} \in \Jac(W_T(Y)),
$$
which is the unique representative such that $\phi_\alpha(\bq) =\sum_{i} c_iY^i$, $i\in\{-m,\dots,r-1\}$.
Therefore, we get
\[
    \fh^\circ_X\Big(\frac{\phi_{\alpha}(\bq)(\Delta^{\alpha}(\bq))^{\frac{1}{2}}}{\frac{\partial W_{T}}{\partial y}}\Big)=\fh^\circ_{X}(\frac{1}{\sqrt{2}}\xi_{\alpha,0}).\qedhere
\]
\end{proof}
Using Lemma \ref{lma:5.1}, we have:
for $\alpha\in\{0,1,\dots,m+r-1\}$, $k> 0$,
\begin{equation}\label{eqn:A-B-open}
    \begin{aligned}
        [z^k]\sum_{\beta}\tilde{\xi}^\beta(z,X)S(\hat{\phi}_\alpha(\bq),\phi_\beta)(z)\big|_{Q=1}&= \fh_X^\circ\Big(
        (-\frac{d}{dx})^k\Big(\frac{-1}{\sqrt{2}}\xi_{\alpha,0}\Big)\Big),
        \\
        \Big(\sum_{\beta}\tilde{\xi}^\beta(z,X)S(\hat{\phi}_\alpha(\bq),\phi_\beta)(z)\big|_{Q=1}\Big)_{+}&= \fh^\bullet_X\Big(\sum_{k\geq 0} 
        \frac{-1}{\sqrt{2}}W^\alpha_k z^k\Big)
        = \fh^\bullet_X\Big(\frac{-\hat{\theta}_\alpha(z)}{\sqrt{2}}\Big).
    \end{aligned}
\end{equation}
\begin{lma}\label{lma:theta-R-matrix}
    We have
    \[
        \theta_\alpha(z) = \sum_{\beta\in\{0,1,\dots,m+r-1\}}\check{R}_\beta^{\ \alpha}(-z)\hat{\theta}_\beta(z).
    \]
\end{lma}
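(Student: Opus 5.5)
The approach is the standard one from \cite{FLZ17,YZ} for $\bP^1$: compare both sides through their Laplace transforms along Lefschetz thimbles, where the definition of $\check{R}$ applies directly. Both sides are meromorphic one-forms in $Y$ with poles only at the critical points $P_\beta$. The right-hand side is built from $W^\beta_k=d((-\frac{d}{dx})^k\xi_{\beta,0})$. Since $\xi_{\beta,0}$ has a simple pole at $P_\beta$ and $x$ is quadratic there, each $W^\beta_k$ has a pole only at $P_\beta$ and is exact. Likewise $d\xi^\alpha_k$ has a pole only at $P_\alpha$, of order $2k+2$ in $\zeta_\alpha$, with vanishing residue.

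The first step is to fix the Laplace transforms of $\hat\theta_\beta$. For a thimble $\gamma_\gamma$, integrate by parts repeatedly. Because $e^{x/z}$ decays at the ends of the thimble, this gives
\[
\int_{\gamma_\gamma} e^{x/z}W^\beta_k = z^{-k}\int_{\gamma_\gamma}e^{x/z}d\xi_{\beta,0}.
\]
Summing over $k$ then yields
\[
\int_{\gamma_\gamma}e^{x/z}\hat\theta_\beta(z)=\sum_k z^k\cdot z^{-k}(\cdots),
\]
so one must track the two variables carefully. Concretely, I would replace the formal $z$ in $\hat\theta$ by an independent variable $w$. Then
\[
\int_{\gamma_\gamma} e^{x/z}\hat\theta_\beta(w)=\frac{1}{1-w/z}\int_{\gamma_\gamma}e^{x/z}d\xi^\beta_0,
\]
and by the definition of $\check{R}$ this is asymptotically $\frac{2\sqrt{\pi}}{\sqrt{-z}}e^{\check u^\gamma/z}\check{R}_\beta^{\ \gamma}(z)/(1-w/z)$.

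The second step is to compute the thimble integral of $d\xi^\alpha_k$ by stationary phase in the local coordinate $\zeta_\alpha$. The defining residue formula with $B$, together with the standard identity that Laplace transforms of $B$ reproduce $\check{R}$, gives
\[
\int_{\gamma_\gamma}e^{x/z}d\xi^\alpha_k \sim \frac{2\sqrt{\pi}}{\sqrt{-z}}e^{\check u^\gamma/z} z^{-k}\,\check R_\alpha^{\ \gamma}(z)\cdots
\]
More cleanly, I would follow \cite[Lemma 6.5]{FLZ17}. There one shows that $\theta_\alpha(z)-\sum_\beta\check R_\beta^{\ \alpha}(-z)\hat\theta_\beta(z)$, read coefficientwise in $z$, is a meromorphic form whose poles are of bounded type at each $P_\beta$. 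Its Laplace transforms along all thimbles vanish to all orders, which uses the unitarity $\check R(z)\check R^T(-z)=I$. Since the thimbles span the relative homology dual to such forms, the difference is zero.

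The main obstacle is the pole-order and residue bookkeeping. Each $z^k$ coefficient of the right-hand side is $\sum_{j\le k}(\check R_j)_\beta^{\ \alpha}(-1)^jW^\beta_{k-j}$. It involves poles at every $P_\beta$, while the left-hand side has a pole only at $P_\alpha$. One must therefore show that the principal parts at $P_\beta$ for $\beta\neq\alpha$ cancel.

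To control this, I would expand $W^\beta_k$ near each $P_\gamma$ in $\zeta_\gamma$ using $(-\frac{d}{dx})=-\frac{1}{2\zeta_\gamma}\frac{d}{d\zeta_\gamma}$. The even-order coefficients of $B$ near $(P_\gamma,P_\beta)$ are encoded by $\check B^{\gamma,\beta}_{k,l}$, which is expressed through $\check R$. Then the identity $\check B$ $=[z^kw^l]\frac{1}{z+w}(\delta-\check R^T(-z)\check R(-w))$ gives exactly the required cancellation.

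Finally, the holomorphic remainder is a global holomorphic form on the compactified curve with zero periods. The periods vanish because all the forms involved are exact or are residue-free, and the normalization fixed by $B$ near $Y=0,\infty$ handles the rest. Hence the remainder vanishes, which concludes the proof.
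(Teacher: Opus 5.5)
\textbf{There is a genuine gap.} It sits in the pole bookkeeping, which is where all the content of the lemma lies. (The paper itself just imports the coefficientwise identity $d\xi^\alpha_k=\sum_{i\le k}\sum_\beta([z^{k-i}]\check R_\beta^{\ \alpha}(-z))W^\beta_i$ from the proof of \cite[Lemma 6.7]{FLZ20a}.)

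Your claim that $W^\beta_k$ has a pole only at $P_\beta$ is false for $k\geq 1$. The form $dx$ vanishes at every $P_\gamma$, and near $P_\gamma$ one has $-\frac{d}{dx}=-\frac{1}{2\zeta_\gamma}\frac{d}{d\zeta_\gamma}$. This operator turns each odd Taylor term $\zeta_\gamma^{2n+1}$ of $\xi_{\beta,0}$ into a pole after $n+1$ applications. Already $-\frac{d\xi_{\beta,0}}{dx}$ has a simple pole at every $P_\gamma$ with $\gamma\neq\beta$.

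With the pole structure you state, the principal part of the right-hand side at $P_\beta$ ($\beta\neq\alpha$) would be the single term $\check R_\beta^{\ \alpha}(-z)$ times the principal part of $\hat\theta_\beta(z)$ at $P_\beta$. That is nonzero, so the cancellation you announce could not happen. In your later paragraphs the cancellation is only asserted (``gives exactly the required cancellation''), never computed.

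The Laplace-transform paragraph does not fill this in either. The thimble transform of $d\xi^\alpha_k$ is left as ``$\cdots$''; computing it needs the double Laplace transform of $B$, i.e.\ the full $\check B$--$\check R$ identity. The statement that ``the thimbles span the relative homology dual to such forms'' is not an argument. The asymptotics along $\gamma_\gamma$ only see the even part of the Laurent expansion at $P_\gamma$, so making that route rigorous leads back to the same principal-part analysis.

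\textbf{The missing computation is short.} Near $P_\gamma$ one has $\xi_{\beta,0}=\delta_{\beta\gamma}\zeta_\gamma^{-1}+c-\sum_{k\geq 0}\frac{B^{\gamma,\beta}_{k,0}}{k+1}\zeta_\gamma^{k+1}$, and only the terms with $k=2n$ produce poles. Tracking them, the principal part of $\hat\theta_\beta(z)$ at $P_\gamma$ is
$\big(\delta_{\beta\gamma}+\sum_{n\geq 0}(-1)^n\check B^{\gamma,\beta}_{n,0}z^{n+1}\big)E_\gamma(z)$, where $E_\gamma(z)=\sum_{j\geq 0}z^j\,d\big((-\frac{d}{dx})^j\zeta_\gamma^{-1}\big)$.

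The $w^0$-part of the $\check B$--$\check R$ identity, together with $\check R(0)=I$, gives $\sum_n\check B^{\gamma,\beta}_{n,0}t^n=t^{-1}\big(\delta_{\gamma\beta}-\check R_\beta^{\ \gamma}(-t)\big)$. Hence this principal part equals $\check R_\beta^{\ \gamma}(z)E_\gamma(z)$.

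Now $E_\gamma(z)$ is exactly the principal part of $\theta_\gamma(z)$ at $P_\gamma$ (it comes from the $(\zeta_\gamma-\zeta'_\gamma)^{-2}$ term of $B$), and $\theta_\gamma$ has no other poles. So $\hat\theta_\beta(z)-\sum_\gamma\check R_\beta^{\ \gamma}(z)\theta_\gamma(z)$ has no poles on $\bC^*$. It is also holomorphic at $Y=0,\infty$, because $B$ is the Bergman kernel of $\bP^1$ in $Y$ and the vector field $\frac{d}{dx}$ vanishes at $Y=0,\infty$. A holomorphic one-form on $\bP^1$ is zero, so no period or normalization argument is needed. Finally, unitarity $\sum_\beta\check R_\beta^{\ \alpha}(-z)\check R_\beta^{\ \gamma}(z)=\delta_{\alpha\gamma}$ inverts $\hat\theta_\beta(z)=\sum_\gamma\check R_\beta^{\ \gamma}(z)\theta_\gamma(z)$ into the lemma.
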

\begin{proof}
    The lemma follows from
    \[
        d\xi^\alpha_{k} = \sum_{i=0}^{k}\sum_{\beta\in\{0,\dots,m+r-1\}}([z^{k-i}]\check{R}_\beta^{\ \alpha}(-z))W_i^\beta,
    \]
    which is shown in the proof of \cite[Lemma 6.7]{FLZ20a}.
\end{proof}
Then we have the identification of open leaves:
\begin{theorem}\label{thm:open-leaves}
    For each ordinary leaf $l_j$ with $\beta(l_j)=\beta$ and $k(l_j)=k\in\ZZ_{\geq 0}$, we have
    \[
        (\cL^O)^\beta_k(l_j)|_{Q=1}= (\check{\cL}^O)^\beta_k(l_j).
    \]
\end{theorem}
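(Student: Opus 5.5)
The plan is to compare the two leaf weights directly, using the second identity in \eqref{eqn:A-B-open} together with Lemma \ref{lma:theta-R-matrix}, and the identification of the A- and B-model $R$-matrices. Here $(\cL^O)^\beta_k$ means the open leaf \eqref{eqn:A-model-open-leaf}, i.e.\ $(\tilde{\cL}^O)^\beta_k$.

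First, at $Q=1$ and under $\bt=\tau(\bq)$, the A-model open leaf reads
\[
(\tilde{\cL}^O)^\beta_k(l_j)\big|_{Q=1} = [z^k]\sum_{\gamma}\Big(\sum_\alpha \tilde{\xi}^\alpha(z,\tX_j)S^{\hat{\underline{\gamma}}}_{\ \alpha}(z)\big|_{Q=1}\Big)_+ R(-z)_\gamma^{\ \beta}.
\]
The inner sum is exactly $U^{\hat\gamma}(z)$, and by \eqref{eqn:A-B-open} its nonnegative part equals $\fh^\bullet_{X_j}\big(-\hat\theta_\gamma(z)/\sqrt{2}\big)$. Since $\fh^\bullet$ is linear and commutes with multiplication by the $z$-series $R(-z)$, whose coefficients do not depend on $Y$, we get
\[
(\tilde{\cL}^O)^\beta_k(l_j)\big|_{Q=1}=\frac{-1}{\sqrt{2}}\fh^\bullet_{X_j}\Big([z^k]\sum_\gamma R(-z)_\gamma^{\ \beta}\hat\theta_\gamma(z)\Big).
\]

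Second, I identify the A-model $R$-matrix (at $Q=1$, after the mirror map) with the B-model $\check R$. For the football this is done in \cite{Lan25}, following \cite{FLZ17,Tang}. Both matrices satisfy the same flatness equation under the Frobenius isomorphism of Proposition \ref{prop:Frobenius}, and since the equivariant theory is semisimple, the ambiguity $\exp(\sum a_{2i+1}z^{2i+1})$ is fixed by the asymptotics of the oscillatory integrals, equivalently by the degree-zero limit. I invoke this closed-sector identification $R_\gamma^{\ \beta}(z)|_{Q=1}=\check R_\gamma^{\ \beta}(z)$ as an established result. Lemma \ref{lma:theta-R-matrix} then gives $\sum_\gamma \check R_\gamma^{\ \beta}(-z)\hat\theta_\gamma(z)=\theta_\beta(z)$, whose $z^k$-coefficient is $d\xi^\beta_k$. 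Hence the right side becomes $\frac{-1}{\sqrt{2}}\fh^\bullet_{X_j}(d\xi^\beta_k(Y_j))=(\check{\cL}^O)^\beta_k(l_j)$.

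The main obstacle is justifying \eqref{eqn:A-B-open} rigorously, especially at $k=0$ and in the truncation $(\cdot)_+$.
\begin{itemize}
\item For $k=0$, I would use Lemma \ref{lma:5.1} together with the expression for $\partial F_{0,1}/\partial\tilde q_i$ in terms of $[z^{-1}]$ of the S-operator pairing, and convert $\partial/\partial\tilde q_i$ into $\partial/\partial u^{\hat\alpha}$ via the $\Psi$-matrix. This matches $[z^0]U^{\hat\alpha}$, which relates to the $[z^{-1}]$ coefficient through the string-type equation for $S$, with $\fh^\circ(-\xi_{\alpha,0}/\sqrt{2})$. I use Theorem \ref{thm:disk} for the base identification.
\item For $k>0$, the recursion $\big(\frac{1}{\bp}\tX\frac{d}{d\tX}\big)[z^m]U=[z^{m+1}]U$ matches $-\frac{d}{dx}$ on the B-side, since $X=e^{-x/\bp}$.
\item Finally, the passage from $\fh^\circ$ to $\fh^\bullet$ uses $W^\alpha_k=d(\cdots)$ with single-valued primitives on $D_\epsilon$.
\end{itemize}
A secondary point to check is the sign and normalization conventions of $\sqrt{\Delta^\alpha}$ on both sides.
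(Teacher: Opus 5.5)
Your proposal is correct and is essentially the paper's argument: insert the second identity of \eqref{eqn:A-B-open} into the A-model open leaf, replace $R|_{Q=1}$ by $\check R$ using \cite{Lan25}, and apply Lemma \ref{lma:theta-R-matrix} to recover $d\xi^\beta_k$. The justification you sketch for \eqref{eqn:A-B-open} is already supplied in the paper before the theorem, via Lemma \ref{lma:5.1} and the relation $(\frac{1}{\bp}\tX\frac{d}{d\tX})[z^m]U=[z^{m+1}]U$. That relation comes from the $(\bp/\mu)^{-(k+2)}$ weights in $\tilde\xi$, not from a string equation for $S$.
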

\begin{proof}
    By \cite{Lan25}, the A-model and B-model $R$-matrices are equal:
    $$R(z)=\check{R}(z).$$
    Then Theorem \ref{thm:open-leaves} follows from Equation \eqref{eqn:A-model-open-leaf}, Equation \eqref{eqn:A-B-open}, and Lemma \ref{lma:theta-R-matrix}.
\end{proof}
\subsection{Mirror symmetry for annulus invariants}
Define 
\begin{equation*}
    \begin{aligned}
        C(Y_1,Y_2) &:= (-\frac{\partial}{\partial x(Y_1)}-\frac{\partial}{\partial x(Y_2)})
        \Big(\frac{\omega_{0,2}}{dx(Y_1)dx(Y_2)}\Big)(Y_1,Y_2)dx(Y_1)dx(Y_2)
        \\
        & \ = \Big(-d_1\circ\frac{1}{dx(Y_1)}-d_2\circ\frac{1}{dx(Y_2)}\Big)(\tilde{\omega}_{0,2}(Y_1,Y_2)).
    \end{aligned}
\end{equation*}

The following proposition is proved in \cite[Lemma 6.9]{FLZ20b}.
\begin{prop} We have
\[
    C(Y_1,Y_2) = \frac{1}{2}\sum_{\alpha}d\xi^{\alpha}_{0}(Y_1)d\xi^{\alpha}_{0}(Y_2).
\]
\end{prop}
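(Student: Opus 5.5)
The plan is to prove the identity
\[
C(Y_1,Y_2) = \frac{1}{2}\sum_{\alpha}d\xi^{\alpha}_{0}(Y_1)\,d\xi^{\alpha}_{0}(Y_2)
\]
by showing that both sides are meromorphic symmetric bidifferentials on $\bC^*\times\bC^*$ (in the coordinates $Y_1,Y_2$) with the same poles and principal parts, and that their difference vanishes.

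First I will analyse the left side. We have $\omega_{0,2}=dY_1dY_2/(Y_1-Y_2)^2$ and $dx = Y\,\frac{\partial W_T}{\partial Y}\frac{dY}{Y}$, so $\omega_{0,2}/(dx_1dx_2)$ is an explicit rational function of $Y_1,Y_2$. Apply $\partial_{x_1}+\partial_{x_2}$, i.e. $\frac{1}{x'(Y_1)}\partial_{Y_1}+\frac{1}{x'(Y_2)}\partial_{Y_2}$, to
\[
\frac{1}{(Y_1-Y_2)^2x'(Y_1)x'(Y_2)}.
\]
Along the diagonal this operator acts as a vector field tangent to the level structure. Expanding in $Y_1-Y_2=\epsilon$, the pole of order $3$ and $4$ cancels by symmetry of the operator, and the remaining order-$2$ pole cancels as well, by the same computation as in \cite[Lemma 6.9]{FLZ20b}. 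Hence $C$ has no pole on the diagonal. Off the diagonal, the only poles come from zeros of $x'(Y_i)$, i.e. the ramification points $P_\alpha$, together with possible behaviour at $Y=0$ and $Y=\infty$. Since $x'(Y)\sim Y^{r-1}$ at $\infty$ and $\sim Y^{-m-1}$ at $0$, the factors $1/x'$ vanish there, so $C$ is holomorphic at $0$ and $\infty$. I need to check this carefully, because the extra $\partial_Y$ could raise the pole order; the decay of $1/x'$ should dominate.

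For the right side, $\xi_{\alpha,0}$ is a multiple of $P_\alpha/(Y-P_\alpha)$, so $d\xi^\alpha_0$ has only a double pole at $P_\alpha$ and is holomorphic elsewhere, including at $0$ and $\infty$. It therefore suffices to match the principal parts at $Y_1=P_\alpha$, with $Y_2$ generic, and then invoke the fact that a bidifferential with no poles on $\bP^1\times\bP^1$ is zero.

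The local computation goes as follows. Near $P_\alpha$ write $x=u^\alpha+\zeta_\alpha^2$. Then $\frac{1}{dx_1}\,\omega_{0,2}$ has a simple pole in $\zeta_\alpha$, and $-d_1\circ\frac{1}{dx_1}$ produces a double pole whose coefficient equals $\frac{1}{2}\cdot\frac{1}{2}\Res B(\cdot,Y_2)\zeta^{-1}$, which is essentially $\frac12 d\xi^\alpha_0(Y_2)$. The $d_2\circ\frac{1}{dx_2}$ term is regular in $Y_1$ at $P_\alpha$. The remaining task is to check that the coefficient normalization $\sqrt{2/\Delta^\alpha}$ in $\xi_{\alpha,0}$ matches, using $d\xi_{\alpha,0}=d\xi^\alpha_0$ as stated in the paper.

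I expect the main obstacle to be the bookkeeping of constants and the verification of the absence of residual simple poles at $P_\alpha$, both of which follow from symmetry. Since the paper already credits \cite[Lemma 6.9]{FLZ20b}, whose proof is curve-independent, I would simply check that its hypotheses (genus zero curve, $B$ the standard Bergman kernel, simple ramification) hold here.
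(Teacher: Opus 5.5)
The proposal is correct, and it differs in kind from the paper, which gives no argument here and simply cites \cite[Lemma 6.9]{FLZ20b}. Your route is to treat both sides as meromorphic bidifferentials on $\bP^1_Y\times\bP^1_Y$, match principal parts at the $P_\alpha$, check regularity on the diagonal and at $Y=0,\infty$, and conclude because a holomorphic differential on $\bP^1$ vanishes. This gives a self-contained proof that uses the genus-zero nature of this curve: there is no $A$-period normalization of $B$ to control, so matching principal parts suffices. The citation buys brevity. Your argument buys independence from the setting of the cited lemma; the particular shape of $W_T$ enters only through the check at $Y=0,\infty$, where $dx$ has poles of orders $m+1$ and $r+1$.

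You can even bypass Liouville. With $f:=1/x'(Y)=Y^{m+1}/\big(r\prod_\beta(Y-P_\beta)\big)$, a direct computation gives
\[
C(Y_1,Y_2)=\Big(\frac{2(f(Y_1)-f(Y_2))}{(Y_1-Y_2)^3}-\frac{f'(Y_1)+f'(Y_2)}{(Y_1-Y_2)^2}\Big)dY_1dY_2 .
\]
This expression vanishes for constant $f$. For $f=c/(Y-P)$ it equals $c\,dY_1dY_2/\big((Y_1-P)^2(Y_2-P)^2\big)$. By partial fractions, $f=\sum_\alpha \frac{P_\alpha^2/\Delta^\alpha(\bq)}{Y-P_\alpha}$, plus the constant $1$ when $r=1$. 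Since $d\xi^\alpha_0=-\sqrt{2/\Delta^\alpha(\bq)}\,P_\alpha\,dY/(Y-P_\alpha)^2$, this is exactly the right-hand side.

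Several details in your sketch need correcting, though none breaks the argument.

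\textbf{(i) Behaviour at $Y_1=\infty$.} It is not true that $1/x'$ vanishes there when $r=1$; it tends to $1$. The correct reasons are:
\begin{itemize}
\item $f(Y_1)(Y_1-Y_2)^{-2}=O(Y_1^{-1-r})$ is holomorphic at $\infty$, so its $d_1$ is too;
\item the $d_2$-term is $O(Y_1^{-2})\,dY_1$, hence also holomorphic at $\infty$.
\end{itemize}

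\textbf{(ii) Double-pole coefficient.} It is $\tfrac12$, not $\tfrac12\cdot\tfrac12$. Write $\omega_{0,2}/dx_1=b(\zeta_\alpha)/(2\zeta_\alpha)$. Then $-d_1(\omega_{0,2}/dx_1)$ has principal part $\tfrac12 b(0)\,d\zeta_\alpha/\zeta_\alpha^2$. The residue definition of $d\xi^\alpha_0$ gives two facts:
\begin{itemize}
\item $b(0)=-d\xi^\alpha_0(Y_2)$;
\item $d\xi^\alpha_0(Y_1)=-d\zeta_\alpha/\zeta_\alpha^2+(\text{hol.})$.
\end{itemize}
So both sides have principal part $-\tfrac12\,d\xi^\alpha_0(Y_2)\,d\zeta_\alpha/\zeta_\alpha^2$. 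The normalization $\sqrt{2/\Delta^\alpha}$ of $\xi_{\alpha,0}$ never needs to be checked.

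\textbf{(iii) No $d\zeta_\alpha/\zeta_\alpha$ term.} This is not a consequence of symmetry. It holds because the $d_1$-part is exact in $Y_1$ and the $d_2$-part is regular at $Y_1=P_\alpha$.

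\textbf{(iv) Diagonal regularity.} The cleanest argument is that near any non-ramification point $\omega_{0,2}=dx_1dx_2/(x_1-x_2)^2+(\text{hol.})$, and $\partial_{x_1}+\partial_{x_2}$ annihilates the singular part. The candidate pole there is at most of order $3$, not $4$.
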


We define a rescaling operator on $W_{g,n}(\bq,X_1,\dots,X_n)$:
\begin{align*}
    {\rm D}_\bq : X_i^\mu \mapsto 
        \begin{cases}
            \displaystyle
            q_*^{\,\mu\frac{\bp_-}{\bp}}
            \prod_{\ell=-m+1}^{r-1}
            q_\ell^{\,\mu\frac{\bw_\ell}{\bp}}
            X_i^\mu,
            &\mu>0,\\[4mm]
            \displaystyle
            q_*^{\,\mu\left(\frac{\bp_-}{\bp}+1\right)}
            \prod_{\ell=-m+1}^{r-1}
            q_\ell^{\,\mu\frac{\bw_\ell}{\bp}}
            X_i^\mu,
            &\mu<0.
        \end{cases}
\end{align*}
and define
$$
    W_{0,2}(0,X_1,X_2) := \lim_{q_*,q_i\rightarrow 0} {\rm D}_{\bq}\Big(W_{0,2}(\bq,X_1,X_2)\Big).
$$

\begin{lma}
   \label{thm:annulus} Let $F_{0,2}(0;\tX_1,\tX_2)$ be the A-model annulus function at $\bt,Q=0$.
   Identify $\tX_i$ and $X_i$, we have
    \[
        F_{0,2}(0;\tX_1,\tX_2)=W_{0,2}(0,X_1,X_2).
    \]
\end{lma}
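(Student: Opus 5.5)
The plan is to compute both sides explicitly as double Laurent series in $X_1,X_2$ and match them coefficient by coefficient, following the strategy used for $(\bP^1,\bR\bP^1)$ in \cite{YZ}.

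\textbf{A-model side.} At $\bt=0$, $Q=0$ only $d=0$ contributes. By Proposition \ref{prop:localization}, the fixed loci then have no edges and a single vertex $v$ of genus $0$ with the two leaves $l_1,l_2$ attached.

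\emph{Signs of the winding numbers.} Since $v$ sits at one fixed point $p_{\vec f(v)}$, both winding numbers $\mu_1,\mu_2$ must have the same sign. Positive windings go with $p_r$ and negative windings with $p_{-m}$.

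\emph{Vertex factor.} The vertex is the unstable space $\overline{\cM}_{0,(\mu,-\mu)}(\BG_{\vec f(v)})$. The compatibility condition (v) at the vertex forces $k_{(l_1,v)}k_{(l_2,v)}=1$, i.e.\ $\mu_1+\mu_2\equiv 0$ modulo $|G_{\vec f(v)}|$. The convention in Proposition \ref{prop:localization} then evaluates the vertex integral to
\[
\frac{1}{|G|\,({\bf w}_{(l_1,v)}+{\bf w}_{(l_2,v)})}.
\]

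\emph{Total contribution.} Combining this with the leaf factors $1/{\bf w}_{(l_j,v)}$ and the disk factors $D^2(\mu_j)$ (resp.\ $D^1(-\mu_j)$) gives, for $\mu_1,\mu_2>0$ with $r\mid \mu_1+\mu_2$, a coefficient of the form
\[
\frac{D^2(\mu_1)D^2(\mu_2)}{r}\cdot\frac{\mu_1\mu_2}{(\bp/r)^2}\cdot\frac{1}{\frac{\bp}{r}\big(\frac{r}{\mu_1}+\frac{r}{\mu_2}\big)}.
\]
There is an analogous expression for $\mu_1,\mu_2<0$ with $m\mid\mu_1+\mu_2$. All mixed-sign coefficients vanish.

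\textbf{B-model side.} First I will identify $W_{0,2}(0,X_1,X_2)$ with a concrete expansion. Because $\fh^\bullet$ is linear and $\tilde\omega_{0,2}=d_1d_2\log\frac{Y_1-Y_2}{X_1-X_2}$ near $Y=0$, integration by parts gives
\[
W_{0,2}=\fh^\circ_{X_1,X_2}\Big(\log\frac{Y_1-Y_2}{X_1-X_2}\Big),
\]
with the specified expansion regions.

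Next I apply ${\rm D}_\bq$ and let $q\to 0$. This rescaling is designed exactly to remove the prefactors $\prod\tilde q_l^{\bw_l/\bp}$ in $X$, so that in the limit the positive-winding part is governed by the model relation $X=Y e^{-Y^r/\bp}$. For negative winding, the rescaled term $q_*^mY^{-m}$ gives, after the substitution $Y\mapsto q_*Y'$, the model relation $X=Y' e^{-Y'^{-m}/\bp}$ in the limit.

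In each model case the coefficients of $\log\frac{Y_1-Y_2}{X_1-X_2}$ can be computed by two-variable Lagrange inversion. Writing $\log(Y_1-Y_2)=\log Y_1+\log(1-Y_2/Y_1)$, the residues reduce to sums of the type $\sum \frac{\mu_1\mu_2}{\mu_1+\mu_2}\frac{(\mu_1/\bp)^{a}(\mu_2/\bp)^{b}}{a!\,b!}$ with $ra+rb=\mu_1+\mu_2$. These produce the same floor/factorial structure as $D^2$, namely $\lfloor\mu_j/r\rfloor$ and $\mu_j^{\lfloor\mu_j/r\rfloor}$. Mixed-sign coefficients vanish in the limit because the relevant powers of $q_*$ stay positive.

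\textbf{Matching, and the main obstacle.} The final step matches the two sides coefficient by coefficient, using the explicit formulas for $D^1$ and $D^2$ and the weights ${\bf w}(p_r)=\bp/r$, ${\bf w}(p_{-m})=-\bp/m$. I expect the main obstacle to be the Lagrange inversion computation for the orbifold curve. When $\mu_1,\mu_2\not\equiv 0 \pmod r$, the sum must collapse to a single term with the $(r/\bp)$ twisting factors of $D^2$. Proving this collapse amounts to a combinatorial identity of the form $\sum_{a+b=N}\frac{\mu_1^{a}\mu_2^{b}}{a!\,b!}\cdot(\cdots)$, analogous to the identity used in \cite{YZ}. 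A further point needing care is that the subtraction of $\frac{dX_1dX_2}{(X_1-X_2)^2}$ exactly cancels the diagonal terms with $\mu_1+\mu_2=0$, which is what leaves only the same-sign pairs found on the A-model side.
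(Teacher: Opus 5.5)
Your route is genuinely different from the paper's, which never computes coefficients. The paper combines $C(Y_1,Y_2)=\frac12\sum_\alpha d\xi^\alpha_0(Y_1)d\xi^\alpha_0(Y_2)$, the factorization of $V_{z_1,z_2}$ and the open-leaf identity \eqref{eqn:A-B-open} to get $E\,W_{0,2}(\bq;\cdot)=E\,F_{0,2}(\tau(\bq),1;\cdot)$ for all $\bq$, where $E=X_1\partial_{X_1}+X_2\partial_{X_2}$. After ${\rm D}_\bq$ and $q\to 0$ this settles every coefficient with $\mu_1+\mu_2\neq 0$, mixed ones included, since on the A-side mixed windings force an edge. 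Only $\mu_1=-\mu_2$ is left.

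\textbf{Same-sign windings.} Here your direct computation works, and the step you call the main obstacle becomes easy if you apply $E$ first. For $X=Ye^{-Y^r/\bp}$,
\[
E\log\frac{Y_1-Y_2}{X_1-X_2}=\frac{g(Y_1)-g(Y_2)}{Y_1-Y_2}-1=\sum_{k\geq 1}\Big(\frac{r}{\bp}\Big)^k\sum_{i+j=kr}Y_1^iY_2^j,\qquad g=X\frac{dY}{dX}=\frac{Y}{1-rY^r/\bp},
\]
and the right-hand side factorizes. Combine this with
\[
[X^\mu]Y^i=\frac{i}{\mu}\,\frac{(\mu/\bp)^{(\mu-i)/r}}{((\mu-i)/r)!}
\]
(zero unless $r\mid\mu-i$ and $i\leq\mu$) and the telescoping identity
\[
\sum_{t=0}^{a}r^{a-t}(\mu-rt)\frac{\mu^t}{t!}=\frac{\mu^{a+1}}{a!}\qquad(\mu=ra+h).
\]
You get the coefficient $\frac{1}{\mu_1+\mu_2}\big(\frac{r}{\bp}\big)^{(h_1+h_2)/r}\prod_j\frac{\mu_j^{a_j}}{a_j!\,\bp^{a_j}}$, which is zero unless $r\mid\mu_1+\mu_2$. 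The negative side is the same computation in the variables $X^{-1}$, $1/Y'$, with $r\to m$ and $\bp\to-\bp$.

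This matches the A-side, but not your displayed formula. Since ${\bf w}_{(l_j,v)}=\bp/\mu_j$, the leaf factors give $\mu_1\mu_2/\bp^2$, not $\mu_1\mu_2/(\bp/r)^2$. Your formula also has no factor distinguishing the case $r\mid\mu_1$ (the ${\bf h}(v)$ factor). The cleanest form comes from Theorem~\ref{thm:open-descendant} with the unstable $(0,2)$ convention:
\[
D^2(\mu_1)D^2(\mu_2)\,\frac{\mu_1^2\mu_2^2}{\bp^3(\mu_1+\mu_2)}\,(\one_{-\mu_1,r},\one_{-\mu_2,r}),
\]
where, when $r\mid\mu_1+\mu_2$, the pairing is $1/r$ if $r\nmid\mu_1$ and $1/\bp$ if $r\mid\mu_1$.

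\textbf{The gap: mixed-sign terms.} Your reason, that the powers of $q_*$ stay positive, holds for $\mu_1>0>\mu_2$. It is false for $\mu_1=-a<0<b=\mu_2$, which is the ordering singled out by the expansion regions $|Y_1|>|Y_2|$, $|X_1|>|X_2|$.
\begin{itemize}
\item After ${\rm D}_\bq$, the coefficient is $q_*^{-a}$ times the one computed with the prefactor of $X$ removed.
\item Terms of $-\sum_n\frac1n Y_1^{-n}Y_2^n$ with $n\leq\min(a,b)$ can be of order $q_*^{-n}$.
\item The subtracted term contributes $\frac1a q_*^{-a}$ when $a=b$.
\end{itemize}
Concretely, take $m=r=1$. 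At $(\mu_1,\mu_2)=(-2,1)$ the $n=1,2$ terms are $\pm\frac{1}{\bp}q_*^{-1}$. At $(-1,1)$ the $n=1$ term and the subtracted term are $\mp q_*^{-1}$, and the $n=1,2$ terms are $\mp\frac{2}{\bp^2}$ at order $q_*^0$. Only the totals tend to $0$.

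So the vanishing is a cancellation. In particular, the subtraction of $\frac{dX_1dX_2}{(X_1-X_2)^2}$ does not by itself dispose of $\mu_1=-\mu_2$. Your model-curve limits cannot see this, because $\log(1-Y_2/Y_1)$ couples the scales $Y$ and $Y/q_*$.

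\textbf{Repair for $\mu_1+\mu_2\neq 0$.} Apply $E$ before taking the limit. Up to one-variable terms, $E\,W_{0,2}=\fh^\circ\big(\frac{g(Y_1)-g(Y_2)}{Y_1-Y_2}\big)$, with $g=Y/(1-YS'(Y)/\bp)$, $S=Y^r+\sum_{l=-m}^{r-1}\tilde q_lY^l$, expanded formally in the $q$'s. This is a sum of products of one-variable monomials. Every $Y^{-l}$ in $S$ carries $q_*^{l}$, so a power count shows the mixed coefficients are $O(q_*)$ after ${\rm D}_\bq$. Alternatively, you can borrow the paper's Euler identity.

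\textbf{Remaining case.} The anti-diagonal $\mu_1=-\mu_2$ is killed by $E$ and needs its own argument in either approach. The paper simply asserts that $W_{0,2}(0,\cdot)$ has only same-sign terms, and your proposal does not supply an argument either.
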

\begin{proof}
\begin{equation*}
    \begin{aligned}
    & \ \ \ \ \ \mathfrak{h}^\bullet_{X_1,X_2}(C)= 
    \mathfrak{h}^\bullet_{X_1,X_2}(\frac{1}{2}\sum_{\alpha}d\xi^{\alpha}_{0}(Y_1)d\xi^{\alpha}_{0}(Y_2))
    \\
    &= [z_1^0z_2^0]\sum_{\alpha,\beta,\gamma}
    \tilde{\xi}^\beta(z_1,\tX_1)\tilde{\xi}^\gamma(z_2,\tX_2)
    S(\hat{\phi}_\alpha(\bq),\phi_\beta)(z_1,1)S(\hat{\phi}_\alpha(\bq),\phi_\gamma)(z_2,1)
    \\
    &= [z_1^0z_2^0](z_1+z_2)\sum_{\beta,\gamma}V_{z_1,z_2}(\phi_\beta,\phi_\gamma)|_{Q=1}
    \tilde{\xi}^\beta(z_1,\tX_1)\tilde{\xi}^\gamma(z_2,\tX_2)
    \\
    &= \frac{1}{\bp}(\tX_1\frac{\partial}{\partial \tX_1}+\tX_2\frac{\partial}{\partial \tX_2})F_{0,2}(\tau(\bold{q}),1;\tX_1,\tX_2).
    \end{aligned}
\end{equation*}
By integrations by parts,
\begin{equation*}
    \begin{aligned}
        \fh^\bullet_{X_1,X_2}(C) &= \fh^\bullet_{X_1,X_2}\Big(\Big(-d_1\circ\frac{1}{dx(Y_1)}-d_2\circ\frac{1}{dx(Y_2)}\Big)\tilde{\omega}_{0,2}\Big) 
        \\
        &= \frac{1}{\bp}\Big(X_1\frac{\partial}{\partial X_1}+X_2\frac{\partial}{\partial X_2}\Big)W_{0,2}(\bold{q};X_1,X_2).
    \end{aligned}
\end{equation*}
It identifies the series $X_1^{\mu_1}X_2^{\mu_2}$ except $\mu_1=-\mu_2=\mu$. We transfer $X_i \rightarrow {\rm D}_{\bq}(X_i)$ and take the limit $q_*, q_l \rightarrow 0$.
Recall that $\tau=t H + \sum_{i=-m+1}^{r-1}t^i\one_i$, under the mirror map $\tau=\tau(\bq)$, $q_*,q_l$ tends to 0 is equivalent to say $t\rightarrow -\infty$ and $t^i\rightarrow 0$. 
The divisor equation tells that after the rescaling ${\rm D}_{\bq}$, $q_*$ records the curve class degrees as $Q$. We get 
$$
    \frac{1}{\bp}(X_1\frac{\partial}{\partial X_1}+X_2\frac{\partial}{\partial X_2})W_{0,2}(0,X_1,X_2) = \frac{1}{\bp}(\tX_1\frac{\partial}{\partial \tX_1}+\tX_2\frac{\partial}{\partial \tX_2})F_{0,2}(0;\tX_1,\tX_2).
$$
By Poincar\'e pairing $(,)_{\cF(m,r),T}$, $F_{0,2}(0,\tX_1,\tX_2)$ just contains the term $\tX_1^{\mu_1}\tX_2^{\mu_2}$ with $\mu_1\mu_2>0$. Meanwhile, $W_{0,2}$ also just contains the terms with same sign. The Theorem follows naturally.
\end{proof}

\subsection{Identification of graph sums}
The identification of open leaves shows the identification of graph sums. Combining this result with the identification $(0,1)$, $(0,2)$ case, we have the following theorem.
\begin{theorem} \label{thm:main}
Letting $X_{i}=e^{-\frac{W_{T}(Y_{i})}{\bp}}$, we have:
\[
    W_{g,n}(\bold{q},X_{1},\dots,X_{n})= F_{g,n} (\tau(\bold{q}),1;X_1,\dots,X_n)
\] 
for $g\geq 0, n>0$.
\end{theorem}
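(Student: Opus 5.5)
The proof splits into the unstable cases $(g,n)=(0,1),(0,2)$ and the stable range $2g-2+n>0$.

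\textbf{The case $(0,1)$.} This is exactly Theorem \ref{thm:disk}. It follows from comparing the explicit A-model disk potential of Section \ref{sec:A-explicit-disk} with the residues $R_\mu$ of Section \ref{sec:B-explicit-disk} under the bijection of summation indices. Both $F_{0,1}$ and $W_{0,1}$ are normalized to have zero constant term, so matching their derivatives $\frac{1}{\bp}X\frac{d}{dX}$ suffices.

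\textbf{The case $(0,2)$.} Here I would argue in two steps.

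1. The computation in the proof of Lemma \ref{thm:annulus} already shows
\[
\frac{1}{\bp}\Big(X_1\tfrac{\partial}{\partial X_1}+X_2\tfrac{\partial}{\partial X_2}\Big)W_{0,2}(\bq;X_1,X_2)=\frac{1}{\bp}\Big(\tX_1\tfrac{\partial}{\partial \tX_1}+\tX_2\tfrac{\partial}{\partial \tX_2}\Big)F_{0,2}(\tau(\bq),1;\tX_1,\tX_2).
\]
This equality determines all coefficients of $X_1^{\mu_1}X_2^{\mu_2}$ with $\mu_1+\mu_2\neq 0$.

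2. The coefficients with $\mu_1=-\mu_2$ are killed by the Euler operator and must be fixed separately. I would control their dependence on $\bq$ by differentiating both sides in $\tilde q_i$. On the A-side, the divisor equation and string-type flows along the mirror map govern this derivative. On the B-side, $\partial_{\tilde q_i}$ acting on $\tilde\omega_{0,2}$ is handled by the variational formula as in Lemma \ref{lma:5.1}. This reduces the question to the limit $q_*,q_l\to 0$ after the rescaling ${\rm D}_\bq$, which is Lemma \ref{thm:annulus}.

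The main obstacle is this ambiguous diagonal part $\mu_1=-\mu_2$. If the variational argument proves too messy, I would fall back on the observation in Lemma \ref{thm:annulus} that both sides contain only terms with $\mu_1\mu_2>0$. For such terms the ambiguity $\mu_1+\mu_2=0$ never occurs, so the Euler-operator identity alone determines everything.

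\textbf{The stable range.} For $2g-2+n>0$ I compare the two graph sums. Corollary \ref{thm:open-A-graph-sum-Qone} expresses $F_{g,n}(\tau(\bq),1;X)$ as $\sum_{\vec\Ga}\omega_A^O(\vec\Ga)/|\Aut(\vec\Ga)|$, and Theorem \ref{thm:B-graph-sum} expresses $W_{g,n}$ as $\sum_{\vec\Ga}\check w^O_B(\vec\Ga)/|\Aut(\vec\Ga)|$ over the same index set ${\bf\Ga}_{g,n}(\cF(m,r))$. It therefore suffices to match the weights factor by factor.

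1. \emph{Vertices.} I need $\sqrt{\Delta^\beta(\bq)}=\sqrt2/h_1^\beta$. This follows from the local expansion $x=u^\beta+\zeta^2$, which gives $h_1^\beta=\sqrt{2/W_T''(P_\beta)}$ in the $y$-coordinate, together with Proposition \ref{prop:Frobenius}, which identifies $\Delta^\beta$ with the residue-pairing Hessian.

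2. \emph{Edges.} The edge weights agree because $\check B^{\alpha,\beta}_{k,l}$ and $\cE^{\alpha,\beta}_{k,l}$ are given by the same formula in $R$, and $R=\check R$ by \cite{Lan25}.

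3. \emph{Dilaton leaves.} Using the same $R=\check R$, I need $\frac{-1}{\sqrt2}h_1^\beta=-\frac{1}{\sqrt{\Delta^\beta}}$, which is the vertex identity again.

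4. \emph{Ordinary leaves.} These are Theorem \ref{thm:open-leaves}.

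The factors of $\sqrt2$ should cancel globally, since each vertex of valence $\val(v)$ absorbs exactly the $\sqrt2$ powers carried by its half-edges. I would check this bookkeeping carefully, since it is the only place a sign or normalization error could hide.
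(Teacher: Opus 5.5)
You handle $(0,1)$ and the stable range $2g-2+n>0$ as the paper does. The paper simply cites \cite[Proposition 3.2]{Lan25} for $R|_{Q=1}=\check R$ and $h_1^\alpha/\sqrt2=1/\sqrt{\Delta^\alpha(\bq)}$. Your derivation of the latter from $x=u^\beta+\zeta_\beta^2$ and the residue pairing is a fine substitute; it gives $\Delta^\beta(\bq)=\partial_y^2W_T(P_\beta)$. No global $\sqrt2$ bookkeeping is needed: with the stated normalizations, vertices, edges, dilaton leaves and ordinary leaves match one factor at a time.

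\textbf{The gap is in the $(0,2)$ case.}

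\emph{The fallback fails.} It misreads Lemma \ref{thm:annulus}. The remark there that only terms with $\mu_1\mu_2>0$ occur concerns $F_{0,2}(0;\tX_1,\tX_2)$, i.e.\ at $\bt=0$, $Q=0$, and the rescaled limit $W_{0,2}(0,X_1,X_2)$. It does not concern $F_{0,2}(\tau(\bq),1;\tX_1,\tX_2)$. At $Q=1$ there are genus-zero decorated graphs with one leaf at a vertex labeled $r$ and the other at a vertex labeled $-m$, joined by an edge of degree $d\geq 1$. These produce opposite-sign windings, in particular the diagonal terms $\mu_1=-\mu_2$. Those are exactly the coefficients annihilated by $X_1\partial_{X_1}+X_2\partial_{X_2}$, so the Euler identity alone does not determine $F_{0,2}$.

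\emph{The primary route is missing its key step.} It has the right shape: differentiate in the parameters, then fix the initial value by Lemma \ref{thm:annulus}. But the tools you name do not show the derivatives agree. The divisor equation only controls the $H$-direction, not the twisted directions $t^i$. Lemma \ref{lma:5.1} computes the variation of the disk form $\Phi_0=dY/Y$, not of $\tilde\omega_{0,2}$.

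\emph{What the paper does instead.} It differentiates along the flat coordinates $t^a$ and reduces to the already-established $(0,3)$ case.
\begin{itemize}
\item On the A-side, $\partial_{t^a}F_{0,2}$ is the three-point correlator $\llangle T_a,\phi_{\alpha_1}\psi^{k_1},\phi_{\alpha_2}\psi^{k_2}\rrangle_{0,3}$, contracted with the open leaves $\txi^{\alpha_j}_{k_j}(X_j)$.
\item On the B-side, special geometry \cite{EO07} gives $\nabla_{\partial/\partial t^a}B=\int_{B_a}\omega_{0,3}$.
\item The stable graph-sum identification at $(g,n)=(0,3)$, together with matching the $T_a$-insertion to the $B_a$-cycle integral as in \cite[Section 7.6]{FLZ20a}, yields $\partial_{t^a}\bigl(F_{0,2}(\tau(\bq),1;X_1,X_2)-W_{0,2}(\bq,X_1,X_2)\bigr)=0$ for every $a$, diagonal coefficients included.
\end{itemize}
Only then does Lemma \ref{thm:annulus} fix the initial value. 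Without this reduction to $(0,3)$, your argument does not determine the $\mu_1=-\mu_2$ part of the annulus potential.
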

\begin{proof}
    For the unstable case $(g,n)=(0,1)$, this theorem is Theorem \ref{thm:disk}.

    For stable cases $2g-2+n>0$, it suffices to show the A- and B-model graph sum formula in Corollary \ref{thm:open-A-graph-sum-Qone} and Theorem \ref{thm:B-graph-sum} are equal (with sign corrections on ordinary leaves).
    By \cite[Proposition 3.2]{Lan25}, we have
    \[
        R_\beta^{\ \alpha}(z)\big|_{Q=1} = \check{R}_\beta^{\ \alpha}(z),
    \]
    and 
    \[
        \frac{h^\alpha_1}{\sqrt{2}} = \frac{1}{\sqrt{\Delta^\alpha(\bq)}}.
    \]
    Hence, the weights in the graph sum match except for the open leaves. Since 
    Theorem \ref{thm:open-leaves} identifies the A- and B- open leaves, the stable cases follow immediately.

    For the unstable case $(g,n)=(0,2)$, the special geometry \cite{EO07} tells that there are B-cycles $B_a$ on the mirror curve $C_q$ such that
    \[
        (\nabla_{\partial/\partial_{t^a}} B)(p_1,p_2) = \int_{p\in B_a} \omega_{0,3}(p_1,p_2,p).
    \]
    In A-model, let $\bt = \sum_a t^aT_a$, take the derivative $t^a$ on $F_{0,2}$ means that we are considering
    \[
        \frac{\partial}{\partial t^a}F_{0,2}(\bt,Q;X_1,X_2) = \sum_{k_1,k_2\geq 0\atop{\alpha_{i}\in \{0,\dots,m+r-1\}}}
        \llangle T_a,\phi_{\alpha_1}\psi^{k_1},\phi_{\alpha_2}\psi^{k_2}\rrangle_{0,3}^{\cF(m,r),T}
        \prod_{j=1}^{2}\tilde{\xi}_{k_j}^{\alpha_j}(X_j).
    \]
    Following \cite[Section 7.6]{FLZ20a} and the identification of A- and B-model graph sum formula, we know $$\partial_{t^a}(F_{0,2}(\tau(\bq),1;X_1,X_2)-W_{0,2}(\bq,X_1,X_2))=0.$$
    Lemma \ref{thm:annulus} identifies the initial values. The theorem follows immediately.
\end{proof}
\subsection{Full descendant open mirror symmetry}\label{sec:full descendant MS}
In this section we consider the Laplace transforms at appropriate cycles to Theorem \ref{thm:main} to produce an open mirror symmetry theorem concerning descendant potentials. We follow the notations and results in \cite[Section 4.2]{Lan25} and \cite{Fang20}.

\begin{defn}[equivariant Chern character] We define equivariant Chern character
\[\widetilde{ch}_z: K_{T}(\cF(m,r))\rightarrow H_{\CR,T}^\ast(\cF(m,r),\mathbb{Q})\left[\left[ \dfrac{p}{z} \right]\right]\] by the following two properties which uniquely characterize it:
\begin{enumerate}
  \item $\widetilde{ch}_z(\varepsilon_1\oplus\varepsilon_2)=\widetilde{ch}_z(\varepsilon_1)+\widetilde{ch}_z(\varepsilon_2)$.
  \item Let $\cX=\cF(m,r)$, $I\cX=\bigsqcup_{v\in\mathrm{Box}(\Sigma)}\cX_{v}$.If $\mathcal{L}$ is a $T$-equivariant line bundle on $\cF(m,r)$, then
  $$\widetilde{ch}_z(\mathcal{L})=\bigoplus_{v\in\mathrm{Box}(\Sigma)}\exp\big( 2\pi i(-\frac{(c_1)_T(\mathcal{L}_{v})}{z}+\age_{v}(\cL))\big)$$
  where $\cL_{v}=\cL|_{\cX_{v}}$, and $\age_{v}(\cL)$ is the age of $\cL_{v}$ along $\cX_{v}$.
  \item $\widetilde{\Gamma}_{z}(\cL)=\bigoplus_{v\in\mathrm{Box}(\Sigma)}(-z)^{1+\frac{(c_1)_T(\cL_{v})}{z}-\age_{v}(\cL)}\Gamma(1+\frac{(c_1)_T(\cL_{v})}{z}-\age_{v}(\cL))$
\end{enumerate}
\end{defn}

\begin{defn}[equivariant $K$-theoretic framing]
For $\forall \varepsilon\in K_T(\cF(m,r))$, we define the $K$-theoretic framing of $\varepsilon$ by
$\kappa(\varepsilon)= \widetilde\Gamma_{z}\big(T\cX\big) \widetilde{ch}_z(\varepsilon)$.
\end{defn}

\begin{defn}[equivariant SYZ $T$-dual] 
    Let $\mathcal{L}=\mathcal{O}_{\cF(m,r)}(\ell_1 p_1+\ell_2 p_2)$ be an equivariant ample line bundle on $\cF(m,r)$, where $\ell_1,\ell_2 \in \mathbb{Z}$, such that $\ell_1+\ell_2 >0$. We define equivariant Strominger-Yau-Zaslow (SYZ) T-dual \cite{SYZ96} $\mathrm{SYZ}(\mathcal{L})$ of $\mathcal{L}$ be the oriented path in $\bC$ (see Figure \ref{fig:SYZmirror}). We extend this definition additively to $K_{T}(\cF(m,r))$.
\end{defn}

\begin{figure}[h]
\begin{center}
\setlength{\unitlength}{2mm}
\begin{picture}(60,20)

\put(10,6){\vector(1,0){10}}
\put(20,6){\line(1,0){10}}
\put(30,6){\vector(0,1){5}}
\put(30,11){\line(0,1){5}}
\put(30,16){\vector(1,0){10}}
\put(40,16){\line(1,0){10}}

\put(5,2){$-\infty+2\pi i \cdot \frac{-\ell_1}{m}$}
\put(27,2){$2\pi i\cdot\frac{-\ell_1}{m}$}
\put(27,18){$2\pi i\cdot\frac{\ell_2}{r}$}
\put(40,18){$2\pi i \cdot\frac{\ell_2}{r}+(+\infty)$}

\end{picture}
   \caption{SYZ $T$-dual of $\cL$ in $\bC$}
  \label{fig:SYZmirror}
\end{center}
\end{figure}

In \cite{Lan25}, we have the following results:
\begin{prop}\label{T-dual transformed}
\

\begin{enumerate}
  \item By string equation: \[
  \dint_{\mathrm{SYZ}(\mathcal{L})} e^{\frac{W_T}z} dy =\left<\!\left< \dfrac{\kappa(\mathcal{L})}{z(z-\psi)}\right>\!\right>^{\cF(m,r),T}_{0,1}
  =\left<\!\left< 1, \dfrac{\kappa(\mathcal{L})}{z-\psi}\right>\!\right>^{\cF(m,r),T}_{0,2};
  \]
  \item Integrating by parts, \[ -\left<\!\left< \dfrac{\kappa(\mathcal{L})}{z-\psi}\right>\!\right>^{\cF(m,r),T}_{0,1} =-z \dint_{\mathrm{SYZ}(\mathcal{L})} e^{\frac{W_T}z} dy = \dint_{\mathrm{SYZ}(\mathcal{L})} e^{\frac{W_T}z} ydx.
  \]
\end{enumerate}
\end{prop}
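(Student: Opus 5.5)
Part (1) and part (2) are both consequences of the closed genus-zero mirror theorem, so the plan is to reduce both to the oscillatory-integral form of $I^T=J^T$ and then apply the string equation and a boundary-term-free integration by parts.

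\textbf{Part (1).} I will first compute $\int_{\mathrm{SYZ}(\cL)}e^{W_T/z}dy$ directly from the explicit form of $W_T$. Write $Y=e^y$ and expand $e^{W_T/z}$ as a product. There are three types of factors:
\[
\prod_{l} e^{-\bw_l\log(\tilde q_lY^l)/z}, \qquad e^{Y^r/z}\ \text{and}\ e^{\tilde q_{-m}Y^{-m}/z}, \qquad e^{\tilde q_lY^l/z}\ \text{for}\ -m<l<r .
\]
I will expand the third type into power series, which produce the factors $\bq^d/\prod d_i!$. The remaining one-variable integrals along the two horizontal rays of $\mathrm{SYZ}(\cL)$ are Hankel-type integrals. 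They produce Gamma factors $\Gamma(\frac{\bp}{rz}+\dots)$ and $\Gamma(\frac{-\bp}{mz}+\dots)$, and the phases $e^{2\pi i(\cdot)}$ come from the imaginary shifts $2\pi i\ell_2/r$ and $-2\pi i\ell_1/m$. Each ray lies in a sector where one of $e^{Y^r/z}$, $e^{\tilde q_{-m}Y^{-m}/z}$ decays; this is the point of choosing the path as in Figure~\ref{fig:SYZmirror}. I will then match the resulting series term by term, splitting into the classes $d\in\bK_{\eff,r}$ and $d\in\bK_{\eff,-m}$, with
\[
\big(I^T(\bq,z),\,\widetilde\Gamma_z(T\cX)\,\widetilde{ch}_z(\cL)\big)_{\cF(m,r),T}
\]
up to the standard normalization. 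The Gamma factors of $\widetilde\Gamma_z(T\cX)$ at $p_r$ and $p_{-m}$ should cancel the Gamma-function denominators of $I^T$. The twisted-sector pieces $\one_{v(d)}$ pair with the $\age_v(\cL)$ phases in $\widetilde{ch}_z$. This bookkeeping is the main obstacle: the ages, the fractional parts $\{-\tilde d_r\}$, and the $(-z)^{\dots}$ powers must all line up. I would check it first on the untwisted sector and then on each $\one_{h,r}$ and $\one_{-h,-m}$ separately.

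Once that pairing identity holds, the mirror theorem $I^T(\bq,z)=J^T(\tau(\bq),z)$ together with the definition of $J^T$ through $\cS$ gives
\[
(J^T,\kappa(\cL))=\left<\!\left<\dfrac{\kappa(\cL)}{z(z-\psi)}\right>\!\right>_{0,1},
\]
using the $(0,1)$ and $(0,2)$ unstable conventions. The string equation then converts this into the two-point form $\left<\!\left<1,\dfrac{\kappa(\cL)}{z-\psi}\right>\!\right>_{0,2}$.

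\textbf{Part (2).} I will integrate by parts. Along $\mathrm{SYZ}(\cL)$ we have
\[
d\big(y\,e^{W_T/z}\big)=e^{W_T/z}dy+\tfrac{1}{z}\,y\,e^{W_T/z}\,dx, \qquad x=W_T .
\]
The boundary terms vanish because $e^{W_T/z}$ decays at both ends of the path faster than $y$ grows. This gives
\[
\int e^{W_T/z}\,y\,dx=-z\int e^{W_T/z}\,dy .
\]
Finally, part (1) combined with the definition $\left<\!\left<\frac{\gamma}{z-\psi}\right>\!\right>_{0,1}$, including the $z(\one,\gamma)$ unstable term, identifies $-z\int e^{W_T/z}\,dy$ with $-\left<\!\left<\dfrac{\kappa(\cL)}{z-\psi}\right>\!\right>_{0,1}$.
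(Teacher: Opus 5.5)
\textbf{Where your proposal agrees with the paper.} The paper does not reprove the oscillatory-integral identity. It imports it from \cite{Lan25}, uses the string equation to pass between the one- and two-point forms, and gets (2) by integration by parts. Your skeleton matches this, and your part (2), including the vanishing of the boundary terms, is fine.

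\textbf{The gap.} It lies in the step you propose for actually proving the identity in (1). Expanding $e^{\tilde q_lY^l/z}$ for $-m<l<r$ is legitimate. After that, the integrand is $e^{\sigma y}\exp\big((e^{ry}+q_*^me^{-my})/z\big)\,dy$ with $\sigma\in -\bp/z+\bZ$. Both extreme exponentials remain present along the whole of $\mathrm{SYZ}(\cL)$, vertical segment included.

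On the right ray this is not a Hankel integral. The ray starts at $|Y|=1$, and after expanding $e^{q_*^mY^{-m}/z}$ you get incomplete Gamma functions $\int_1^\infty t^{\nu-1}e^{t/z}\,dt$, not complete $\Gamma(\frac{\bp}{rz}+\cdots)$. The same happens symmetrically on the left ray, and the arc contributes further terms. Nothing in your plan turns this into the two clean families indexed by $\bK_{\eff,r}$ and $\bK_{\eff,-m}$.

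The obvious shortcut is to expand $e^{\tilde q_{-m}Y^{-m}/z}$ along the whole path and integrate term by term, continuing analytically in $\sigma$. This is not merely unjustified but wrong: it returns only one of the two families, the one attached to the end whose exponential you kept. Already for $m=r=1$ it produces only one of the two series $I_{\pm\nu}$ that make up the Bessel function $K_\nu$.

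\textbf{How to supply the missing step.} You need a way to separate the contributions of the two ends. One method that works uses the quasi-periodicity $W_T(y+2\pi i)=W_T(y)-2\pi i\bp$. Apply Cauchy's theorem to the closed contour formed by $\mathrm{SYZ}(\cL)$, its translate by $2\pi i$, and two far vertical segments. This gives $(1-e^{-2\pi i\bp/z})\int_{\mathrm{SYZ}(\cL)}e^{W_T/z}dy=\pm\int_{U_+}\pm\int_{U_-}$. Here $U_+$ is a U-shaped contour at the right end joining $\mathrm{Im}\,y=2\pi\ell_2/r$ to $2\pi\ell_2/r+2\pi$, and $U_-$ is the analogous contour at the left end joining $-2\pi\ell_1/m$ to $-2\pi\ell_1/m+2\pi$.

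On $U_+$ you may expand $e^{q_*^mY^{-m}/z}$ uniformly. In the variable $t=e^{ry}$, each term is a loop integral winding $r$ times around $0$. It equals $(e^{-2\pi i\bp/z}-1)$ times a complete Gamma function, times a phase determined by the starting height $2\pi\ell_2/r$. The prefactor then cancels. The reflection formula turns that Gamma function into the ratio appearing in $I_{+,h}$ times the Gamma factor of $\widetilde\Gamma_z(T\cX)$ at $p_r$, and the phase is the $\widetilde{ch}_z(\cL)$ factor. The contour $U_-$ gives the $\bK_{\eff,-m}$ part in the same way.

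Other routes also work: a Mellin--Barnes representation, a uniqueness argument for the GKZ/quantum differential system combined with $\bq\to 0$ asymptotics, or simply citing \cite{Lan25} as the paper does. Only with such a computation can you check that the constants come out exactly. The statement is an exact equality, so ``up to the standard normalization'' is not enough.

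\textbf{Minor point.} The definition $(J^T(z),a)=(1,\cS(a))$ gives $\llangle 1,\frac{\kappa(\cL)}{z-\psi}\rrangle_{0,2}$ directly. It is the string equation, with the unstable $(0,1)$ and $(0,2)$ conventions, that yields the one-point form, not the other way round.
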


Define
\[
S_{\underline{\beta}}^{\ \underline{\hat{\alpha}}}(z)=\left<\!\left< \phi_\beta(\bold{q}),\frac{\hat{\phi}_\alpha(\bold{q})}{z-\psi}\right>\!\right>_{0,2}^{\cF(m,r),T},\\
S_{\widehat{\underline{\beta}}}^{\ \kappa(\mathcal{L})} (z) =\left<\!\left< \hat{\phi}_\beta(\bold{q}),\frac{\kappa(\mathcal{L)}}{z-\psi} \right>\!\right>_{0,2}^{\cF(m,r),T}.\]
More generally, we have
\begin{prop}
\begin{align*}
    S^{\ \alpha}_{\widehat{\underline{\beta}}}(z) &=  -z \dint_{y\in \gamma_\alpha} e^{\frac{W_T}z} \dfrac{d\xi^{\beta}_{0}}{\sqrt{2}}.
    \\
    S_{\widehat{\underline{\beta}}}^{\ \kappa(\mathcal{L})}(z)&=-z\dint_{y\in{\mathrm{SYZ}(\mathcal{L})}} e^{\frac{W_T}z}\dfrac{d\xi^{\beta}_{0}}{\sqrt{2}}.
\end{align*}
\end{prop}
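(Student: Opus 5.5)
The plan is to reduce both identities to the statement that the column vector $\bigl(S^{\ \cdot}_{\widehat{\underline{\beta}}}(z)\bigr)$ and the oscillatory integrals $-z\int e^{W_T/z}d\xi^\beta_0/\sqrt{2}$ solve the same quantum differential equation, and then to compare them on a basis of solutions. Since $d\xi^\beta_0=d\xi_{\beta,0}$ with $\xi_{\beta,0}=\sqrt{2/\Delta^\beta(\bq)}\,P_\beta/(Y-P_\beta)$, I will integrate by parts:
\[
-z\int_\Gamma e^{W_T/z}\frac{d\xi_{\beta,0}}{\sqrt2}=\int_\Gamma e^{W_T/z}\frac{\xi_{\beta,0}}{\sqrt2}\,dW_T
\]
for any cycle $\Gamma$ along which $\mathrm{Re}(W_T/z)\to-\infty$ at the ends. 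This applies to Lefschetz thimbles $\gamma_\alpha$ and to $\mathrm{SYZ}(\cL)$ when $z$ is suitably chosen, say $z<0$. Writing $dW_T=\frac{\partial W_T}{\partial y}dy=rY^{-m}\prod_\gamma(Y-P_\gamma)\,dy$, the right-hand integrand becomes $\sqrt{\Delta^\beta(\bq)}\,\phi_\beta(\bq)(Y)\,e^{W_T/z}dy$. Here I use exactly the representative of $\phi_\beta(\bq)$ in $\mathrm{Jac}(W_T)$ found in the proof of Lemma \ref{lma:5.1}. So the claim is
\[
S^{\ \Gamma}_{\widehat{\underline{\beta}}}(z)=\int_\Gamma e^{W_T/z}\,\hat\phi_\beta(\bq)(Y)\,dy,
\]
where $\hat\phi_\beta(\bq)(Y)$ is the Laurent polynomial representing $\hat\phi_\beta(\bq)$.

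For the SYZ identity, I start from Proposition \ref{T-dual transformed}(1), which gives the case $\hat\phi_\beta\rightsquigarrow 1$: $\int_{\mathrm{SYZ}(\cL)}e^{W_T/z}dy=\llangle 1,\kappa(\cL)/(z-\psi)\rrangle_{0,2}$. I then differentiate in the flat coordinates $t^i$. On the B-side, $z\partial_{t^i}$ (through the mirror map, with $\partial_{\tilde q_i}W_T=Y^i-\bw_i/\tilde q_i$) inserts the Laurent polynomial corresponding to $\partial/\partial t^i$ under the isomorphism of Proposition \ref{prop:Frobenius}. On the A-side, the topological recursion relation in genus zero gives $z\partial_{t^i}\llangle a,\kappa/(z-\psi)\rrangle_{0,2}=\llangle \partial_{t^i}\star a,\kappa/(z-\psi)\rrangle_{0,2}$. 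Iterating, and using that every element of $\mathrm{Jac}(W_T)$ modulo $\partial_y W_T$ is reached by products of the $\partial_{t^i}$ acting on $1$, both sides become $\mathcal{D}$-module maps agreeing on the cyclic generator $1$. Hence they agree on $\hat\phi_\beta(\bq)$. The key compatibility is that $\partial_yW_T\cdot e^{W_T/z}dy$ integrates to $0$ (it is $z\,d(e^{W_T/z})$), so the Jacobian relations hold on the B-side. I must also check that the B-side Laurent polynomial obtained by differentiation is reduced to the same class modulo $z\partial_y+\partial_yW_T$, not just modulo $\partial_yW_T$. This is the standard twisted de Rham (Brieskorn lattice) statement, which I would cite from \cite{CCIT15,CCK} together with $I=J$.

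For the thimble identity, I intend to run the same argument with $\gamma_\alpha$ in place of $\mathrm{SYZ}(\cL)$. Alternatively, I can expand both sides: the right-hand side is $\sqrt{-2\pi z}\,e^{u^\alpha/z}\check R_\beta^{\ \alpha}(-z)$ up to the normalization in the definition of $\check R$. The left-hand side, by Givental's formula $S=\Psi R(z)e^{U/z}$ (with the matching convention for the thimble basis), has the same form with $R$. The equality then follows from $R=\check R$ (\cite[Proposition 3.2]{Lan25}) and from matching $u^\alpha=\check u^\alpha$, where the constant is fixed by the canonical coordinate normalization.

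The main obstacle is the precise meaning of $S^{\ \alpha}$ for a thimble: it is defined only up to the choice of fundamental solution and constant factors such as $\sqrt{-2\pi z}$. I would first fix this by declaring $S^{\ \alpha}_{\widehat{\underline\beta}}$ to be the asymptotic solution $\Psi R e^{U/z}$ column, suitably normalized. The content is then essentially the identification $R=\check R$ together with the integration by parts in the first step.
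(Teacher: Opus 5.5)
Your integration by parts is exactly right. It turns $-z\,d\xi^\beta_0/\sqrt2$ into $\hat\phi_\beta(\bq)(Y)\,dy$, with precisely the representative from the proof of Lemma \ref{lma:5.1}, and this is the mechanism behind the statement (which the paper records without proof, in the setting of \cite{Lan25}). Deriving the SYZ case by differentiating Proposition \ref{T-dual transformed}(1) is also the right idea.

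\textbf{SYZ case.} Where you go astray is the iteration. You flag an obstacle: matching classes modulo $z\partial_y+\partial_yW_T$ rather than $\partial_yW_T$, which you leave to a Brieskorn-lattice comparison you never pin down. That obstacle is created only by writing $\hat\phi_\beta(\bq)$ through quantum products, and no products are needed. Since $\bt=tH+\sum_i t^i\one_i$ runs over all of $H^*_{\CR,T}(\cF(m,r))$, the class $\hat\phi_\beta(\bq)=\sqrt{\Delta^\beta(\bq)}\,\partial/\partial u^\beta$ is a $z$-independent linear combination of the $\partial/\partial t^i$.

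On the B-side the matching statement holds as an identity of Laurent polynomials, not merely of classes:
\begin{itemize}
\item $\partial W_T/\partial u^\beta$ at fixed $Y$ is a combination of $\partial_{\tilde q_i}W_T=Y^i-\bw_i/\tilde q_i$ for $-m\le i\le r-1$, so it lies in the span of $Y^{-m},\dots,Y^{r-1}$.
\item It represents $\phi_\beta(\bq)$ in $\Jac(W_T)$, by the Kodaira--Spencer form of Proposition \ref{prop:Frobenius} already used in Lemma \ref{lma:5.1}.
\item Representatives in that span are unique, so $\partial W_T/\partial u^\beta$ equals $\phi_\beta(\bq)(Y)$ on the nose.
\end{itemize}
Therefore
\[
-z\int_{\mathrm{SYZ}(\cL)}e^{W_T/z}\frac{d\xi^\beta_0}{\sqrt2}
= z\sqrt{\Delta^\beta(\bq)}\,\frac{\partial}{\partial u^\beta}\int_{\mathrm{SYZ}(\cL)}e^{W_T/z}dy
= z\sqrt{\Delta^\beta(\bq)}\,\frac{\partial}{\partial u^\beta}\llangle 1,\frac{\kappa(\cL)}{z-\psi}\rrangle_{0,2}^{\cF(m,r),T}.
\]
A single application of the genus-zero TRR,
\[
z\,\partial_{t^a}\llangle 1,\frac{\kappa(\cL)}{z-\psi}\rrangle_{0,2}=\llangle T_a,\frac{\kappa(\cL)}{z-\psi}\rrangle_{0,2},
\]
then gives $S_{\widehat{\underline{\beta}}}^{\ \kappa(\cL)}(z)$. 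No $z$-corrections ever arise, so the cyclic-generator argument and the Brieskorn-lattice citation can be dropped.

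\textbf{Thimble case.} The paper never defines $S^{\ \alpha}_{\widehat{\underline{\beta}}}$, so taking it to be the $\sqrt{-2\pi z}$-normalized column of $\Psi R(z)e^{U/z}$ in the normalized canonical frame is acceptable. The identity then reduces to $R=\check R$ together with $u^\alpha=\check u^\alpha$, as asymptotic expansions. Be aware that this makes it a restatement of $R=\check R$, not an independent fact.

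There is also a slip in your formula. By the definition of $\check R$, the right-hand side is $\sqrt{-2\pi z}\,e^{\check u^\alpha/z}\check R_\beta^{\ \alpha}(z)$, not $\check R_\beta^{\ \alpha}(-z)$. With $-z$ it would not match the $R(z)$ in $\Psi R(z)e^{U/z}$.

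Your other route, rerunning the SYZ argument with $\gamma_\alpha$, needs a thimble analogue of Proposition \ref{T-dual transformed}(1), i.e. an A-model expression for $\int_{\gamma_\alpha}e^{W_T/z}dy$. Once that is available, the one-derivative argument above applies verbatim.
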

Integrating the second equation by parts, we have
\[S_{\widehat{\underline{\beta}}}^{\ \kappa(\mathcal{L})} (z) =-z^{k+1}\dint_{y\in \mathrm{SYZ}(\mathcal{L})}  e^{\frac{W_T}z} \frac{W_k^\beta}{\sqrt{2}}.\]
Also notice that
\[\begin{split}
&\dsum_{\gamma=0}^{m+r-1} S_{\ \alpha}^{\widehat{\underline{\gamma}}} (z) S_{\ \beta}^{\widehat{\underline{\gamma}}}(-z) = (\phi_\alpha(0),\phi_\beta(0))=\frac{\delta_{\alpha\beta}}{\Delta^{\alpha}(0)},\\
&\dsum_{\alpha=0}^{m+r-1} S_\beta^{\ \widehat{\underline{\alpha}}} (-z) S_{\widehat{\underline{\alpha}}}^{\ \kappa(\mathcal{L})}(z) = (\phi_\beta(0),\mathcal{\kappa(L)}).
\end{split}
\]
Let $g,n_1,n_2\in\bZ_{\geq 0}, 2g-2+n_1+n_2> 0$, we define
\begin{equation*}
    W_{g,(n_{1},n_{2})}(y_{1},\dots,y_{n_{1}},X_{n_{1}+1},\dots,X_{n_{1}+n_{2}}):=\fh^\bullet_{X_{n_{1}+1},\dots,X_{n_{1}+n_{2}}}({\omega}_{g,n_{1}+n_{2}}).
\end{equation*}
And we have the following lemma
\begin{lma}\label{lma:5.3}
   Letting $X=e^{-W_{T}(Y)/\bp}$, and $Y=e^y$, we have
   \begin{equation*}       
   \txi^{\beta}_{k}(X)=\fh^\bullet_{X}\big(\Delta^{\beta}(0)\dsum_{\alpha=0}^{m+r-1}\dsum_{l\in\mathbb{Z}_{\geq 0}} [z^{k-l}] S_{\ \beta}^{\widehat{\underline{\alpha}}} (-z) \dfrac{-W_l^\alpha(y)}{\sqrt{2}} \big).
   \end{equation*}  
\end{lma}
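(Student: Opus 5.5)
The plan is to invert the relation \eqref{eqn:A-B-open}, which expresses the non-negative part of $\sum_\beta \txi^\beta(z,X)S(\hat\phi_\alpha(\bq),\phi_\beta)(z)$ as $\fh^\bullet_X(-\hat\theta_\alpha(z)/\sqrt{2})$. We will read off $\txi^\beta_k$ from this using the symplectic (unitarity) property of the $\cS$-operator. Set $U^{\hat\alpha}(z):=\sum_\beta \txi^\beta(z,X)\,S^{\widehat{\underline{\alpha}}}_{\ \beta}(z)$ with $Q=1$. The orthogonality relation stated just above,
\[
\sum_{\alpha} S^{\widehat{\underline{\alpha}}}_{\ \beta}(-z)\,S^{\widehat{\underline{\alpha}}}_{\ \gamma}(z)=\frac{\delta_{\beta\gamma}}{\Delta^\beta(0)},
\]
then gives the exact identity
\[
\txi^\beta(z,X)=\Delta^\beta(0)\sum_\alpha S^{\widehat{\underline{\alpha}}}_{\ \beta}(-z)\,U^{\hat\alpha}(z).
\]
Here we use that $\{\phi^\alpha\}$ and $\{\phi_\beta\}$ are dual bases, so that $S^{\widehat{\underline\alpha}}_{\ \beta}$ is the matrix of $\cS$ in the bases $\hat\phi_\alpha(\bq)$ and $\phi_\beta$.

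Next, we take the coefficient of $z^k$ for $k\ge 0$. Since $S(-z)$ is a power series in $z^{-1}$ (it equals the identity plus negative powers), we get
\[
[z^k]\txi^\beta=\Delta^\beta(0)\sum_\alpha\sum_{l}[z^{k-l}]S^{\widehat{\underline{\alpha}}}_{\ \beta}(-z)\,[z^l]U^{\hat\alpha}.
\]
Because $[z^{k-l}]S(-z)$ vanishes when $l<k$, only $l\ge k\ge 0$ contributes. So only the non-negative part $(U^{\hat\alpha})_+$ enters. By \eqref{eqn:A-B-open}, $[z^l]U^{\hat\alpha}=\fh^\bullet_X(-W^\alpha_l/\sqrt{2})$ for $l\ge 0$. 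Pulling the $\fh^\bullet_X$ outside, which is allowed because the coefficients $S(-z)$ depend only on $\bq$ and not on $Y$, gives the claimed formula. We need to make sure the sum over $l$ is a legitimate convergent expression, or formally well-defined in the $\bq$-adic or $X$-adic sense. The degree filtration handles this: $[z^{-j}]S$ has positive $\bq$-degree or positive cohomological degree, so for fixed order in $\bq$ only finitely many $l$ contribute.

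The main obstacle is the range $k<0$, namely $k=-1,-2$, since $\txi^\beta$ starts at $z^{-2}$. There the truncation to $(\cdot)_+$ is not automatic. The negative coefficients $[z^{-1}],[z^{-2}]$ of $U^{\hat\alpha}$ also appear on the right-hand side. The statement, however, only involves $W^\alpha_l$ with $l\ge 0$. For these $k$ I would verify that the negative-part contributions exactly account for the difference between the formula and the truth. A cleaner route is to use the recursion $(\frac1\bp X\frac{d}{dX})[z^m]U=[z^{m+1}]U$ stated earlier. This lets us write the negative coefficients as primitives of $[z^0]U=\fh^\circ_X(-\xi_{\alpha,0}/\sqrt2)$, and then rewrite the $k=-1,-2$ cases through the correspondence $\fh^\bullet(df)=\fh^\circ(f)$. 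The normalization that $W_{0,1}$ and the $\fh$ operators have no constant term should fix the integration constants. If this fails, I would restrict the lemma to $k\ge 0$, which is all that is needed for the descendant leaves $\cL^O$ with $k(l_j)\ge 0$ in the subsequent full descendant theorem.
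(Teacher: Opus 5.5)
Your argument is the paper's proof: you invert \eqref{eqn:A-B-open} using the unitarity relation $\sum_\alpha S^{\widehat{\underline{\alpha}}}_{\ \beta}(z)S^{\widehat{\underline{\alpha}}}_{\ \gamma}(-z)=\delta_{\beta\gamma}/\Delta^\beta(0)$, and you note that $S(-z)$ has only non-positive powers of $z$, so for $k\ge 0$ only $(U^{\hat\alpha})_+$ enters. Do not try to rescue $k=-1,-2$: the dropped terms $\Delta^\beta(0)\sum_\alpha\sum_{l=k}^{-1}[z^{k-l}]S^{\widehat{\underline{\alpha}}}_{\ \beta}(-z)\,[z^l]U^{\hat\alpha}$ are nonzero in general (for $k=-1$ they equal $\Delta^\beta(0)\sum_\alpha(\hat\phi_\alpha(\bq),\phi_\beta)\,[z^{-1}]U^{\hat\alpha}$, where $[z^{-1}]U^{\hat\alpha}$ is the primitive of $\fh^\circ_X(-\xi_{\alpha,0}/\sqrt{2})$ under $\frac{1}{\bp}X\frac{d}{dX}$), so the identity holds only for $k\ge 0$, which is exactly the range used in Theorem \ref{thm:full descendant MS}.
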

\begin{proof}
    This directly comes from the fact that:
    \begin{equation*}        
        \Big(\sum_{\beta}\tilde{\xi}^\beta(z,X)S(\hat{\phi}_\alpha(\bq),\phi_\beta)(z)\Big|_{Q=1}\Big)_{+}= \fh^\bullet_X\Big(\sum_{k\geq 0} 
        \frac{-1}{\sqrt{2}}W^\alpha_k z^k\Big)
    \end{equation*}
    and 
    \begin{equation*}
        \dsum_{\alpha=0}^{m+r-1}S^{\hat{\ualpha}}_{\ \beta}(z) S^{\hat{\ualpha}}_{\ \gamma}(-z)=\frac{\delta_{\beta,\gamma}}{\Delta^{\beta}(0)}.
    \end{equation*}
\end{proof}
Using Lemma \ref{lma:5.3}, we can prove the following theorem.
\begin{theorem}[full descendant open mirror symmetry]\label{thm:full descendant MS}
    For $g,n_1,n_2\geq 0, n_1+n_2>0$, $2g-2+n_{1}+n_{2}>0$, and $\cL_{i}\in K_{T}(\cF(m,r))$ for $i = 1,\dots, n_1$, let $Q=1$, we have 
    \begin{eqnarray*}        &\dint\dots\dint_{y_{i}\in\mathrm{SYZ}(\cL_{i})} \exp(\dsum_{i=1}^{n_{1}}\frac{W_{T}(y_{i})}{z_{i}}) W_{g,(n_{1},n_{2})}(y_{1},\dots,y_{n_{1}},X_{n_{1}+1},\dots,X_{n_{1}+n_{2}}) \\
    & =\dsum_{\vec{\mu}\in(\bZ_{\neq0})^{n_{2}}}\left<\!\left< \dfrac{\kappa(\mathcal{L}_1)}{z_1-\psi_1}, \cdots,\dfrac{\kappa(\mathcal{L}_{n_{1}})}{z_{n_{1}}-\psi_{n_{1}}}  \right>\!\right>_{g,\vec{\mu}}^{\OGW}\dprod_{i=1}^{n_{2}}X_{i+n_{1}}^{\mu_{i}}.
    \end{eqnarray*}
\end{theorem}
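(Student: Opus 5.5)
The plan is to compare the two sides graph by graph, using the B-model graph sum of Theorem \ref{thm:DOSS} for $\omega_{g,n_1+n_2}$ and the Givental graph sum of Theorem \ref{thm:descendant-graph-sum} for the mixed A-model correlator. First, on the A-model side, I will apply Theorem \ref{thm:open-descendant} to the last $n_2$ boundary insertions. Exactly as in Proposition \ref{prop:open-generating-function}, this rewrites the right-hand side as the closed descendant correlator
\[
\sum_{\alpha_j}\llangle \tfrac{\kappa(\cL_1)}{z_1-\psi_1},\dots,\tfrac{\kappa(\cL_{n_1})}{z_{n_1}-\psi_{n_1}},\phi_{\alpha_{n_1+1}}\psi^{k_{n_1+1}},\dots\rrangle_{g,n_1+n_2}^{\cF(m,r),T}\prod_j\txi^{\alpha_j}_{k_j}(X_j).
\]
Theorem \ref{thm:descendant-graph-sum} then expresses this as a graph sum over ${\bf\Ga}_{g,n_1+n_2}(\cF(m,r))$ with two kinds of ordinary leaves. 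The first $n_1$ leaves are descendant leaves with $\bu_i=\kappa(\cL_i)/(z_i-\psi)$. The last $n_2$ leaves are the open leaves $(\tilde\cL^O)$ of \eqref{eqn:A-model-open-leaf}. Vertices, edges and dilaton leaves are the same as in the purely closed case.

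Second, on the B-model side, I will apply $\fh^\bullet_{X_{n_1+1},\dots,X_{n_1+n_2}}$ to the DOSS graph sum for $\omega_{g,n_1+n_2}$. This gives a graph sum with the same vertex, edge and dilaton weights. Its last $n_2$ leaves are $\frac{-1}{\sqrt2}\fh^\bullet_{X_j}(d\xi^{\beta}_{k}(Y_j))$, and its first $n_1$ leaves are still the forms $\frac{-1}{\sqrt2}d\xi^{\beta}_{k}(y_i)$. The Laplace transform $\int_{\mathrm{SYZ}(\cL_i)}e^{W_T/z_i}(\cdot)$ acts only on the first $n_1$ leaf factors, since all other weights are independent of $y_i$.

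With $R=\check R$ and $h_1^\alpha/\sqrt2=\Delta^\alpha(\bq)^{-1/2}$ from \cite{Lan25}, the vertex, edge and dilaton weights agree. Theorem \ref{thm:open-leaves} identifies the open leaves. So the proof reduces to matching the closed descendant leaves:
\[
\int_{\mathrm{SYZ}(\cL)}e^{W_T/z}\,\tfrac{-1}{\sqrt2}d\xi^\beta_k \;=\; [w^k]\sum_{\alpha,\gamma}\Big(\tfrac{\bu^\alpha(w)}{\sqrt{\Delta^\alpha}}S^{\hat\ugamma}_{\ \hat\ualpha}(w)\Big)_+R(-w)_\gamma^{\ \beta},\qquad \bu=\tfrac{\kappa(\cL)}{z-\psi}.
\]
This is the step already carried out in the closed full-descendant case \cite[Section 4.2]{Lan25}, and I would redo it as follows.
\begin{itemize}
\item Lemma \ref{lma:theta-R-matrix} writes $d\xi^\beta_k=\sum_{i}[w^{k-i}]\check R_\gamma^{\ \beta}(-w)\,W^\gamma_i$.
\item The integrated relation $S_{\hat\ugamma}^{\ \kappa(\cL)}(z)=-z^{i+1}\int_{\mathrm{SYZ}(\cL)}e^{W_T/z}W^\gamma_i/\sqrt2$ converts the transform of $W^\gamma_i$ into $z^{-i-1}S_{\hat\ugamma}^{\ \kappa(\cL)}(z)$.
\item Expanding $\kappa(\cL)/(z-\psi)=\sum_i z^{-i-1}\kappa(\cL)\psi^i$ and using the $S$-matrix symmetry/unitarity relations above, together with the analogue of Lemma \ref{lma:5.3}, puts this into the form $[w^i]\big(\cdots S^{\hat\ugamma}(w)\big)_+$.
\end{itemize}

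The main obstacle is bookkeeping of the truncation $(\cdot)_+$ together with the sign and normalization conventions ($-z$ factors, $\sqrt2$, $\Delta^\alpha(0)$ versus $\Delta^\alpha(\bq)$). I would first check the matching in the case $n_2=0$ against \cite{Lan25} and then import it. A secondary point is that the Laplace integral and the expansion $\fh^\bullet$ must commute with the finite graph sum. This holds because each graph has finitely many heights for fixed total degree in the $z_i$ and $X_j$, and the $\fh^\bullet$ residues are taken in independent variables. Summing over graphs, with identical automorphism factors, gives the theorem.
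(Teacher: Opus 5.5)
Your proposal is correct, but it is organized differently from the paper's proof.

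\textbf{Your route.} You apply Theorem \ref{thm:open-descendant} first. You then match the Givental and DOSS graph sums graph by graph, with mixed leaves, so everything reduces to one closed-leaf identity. That identity does hold, and more directly than your third bullet suggests.
\begin{itemize}
\item Expanding $1/(z-w)$ in $w/z$, any power series $S(w)$ in $w^{-1}$ satisfies $\big(S(w)/(z-w)\big)_+=S(z)/(z-w)$.
\item By linearity of $\cS$, the A-leaf therefore becomes $[w^k]\sum_\gamma S_{\widehat{\underline{\gamma}}}^{\ \kappa(\cL)}(z)\,R_\gamma^{\ \beta}(-w)/(z-w)$.
\item On the B-side, Lemma \ref{lma:theta-R-matrix} together with $\int_{\mathrm{SYZ}(\cL)}e^{W_T/z}W^\gamma_i/\sqrt2=-z^{-i-1}S_{\widehat{\underline{\gamma}}}^{\ \kappa(\cL)}(z)$ gives exactly the same expression.
\end{itemize}
So no unitarity relation and no analogue of Lemma \ref{lma:5.3} are needed. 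The $\Delta^\alpha(0)$ versus $\Delta^\alpha(\bq)$ issue also disappears, since only $(\hat\phi_\gamma(\bq),\cS(\kappa(\cL)))$ enters.

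\textbf{The paper's route.} The paper never opens the graph sum here.
\begin{itemize}
\item It writes $W_{g,(n_1,n_2)}$ as the closed descendant potential in the flat basis $\phi_\beta(0)$. This is contracted with the leg functions $\Delta^{\beta}\sum_{\alpha,l}[z^{k-l}]S_{\ \beta}^{\widehat{\underline{\alpha}}}(-z)\frac{-W^\alpha_l}{\sqrt2}$, i.e.\ the closed all-genus identification from \cite{Lan25}.
\item It converts the last $n_2$ legs to $\txi^{\beta}_{k}$ by Lemma \ref{lma:5.3}.
\item It Laplace-transforms the first $n_1$ legs using $\sum_\alpha S_\beta^{\ \widehat{\underline{\alpha}}}(-z)S_{\widehat{\underline{\alpha}}}^{\ \kappa(\cL)}(z)=(\phi_\beta(0),\kappa(\cL))$. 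This collapses each leg to $\Delta^{\beta}(\phi_\beta(0),\kappa(\cL))z^{-k-1}$, which is the $\tau_k(\phi_\beta(0))$-coefficient of $\kappa(\cL)/(z-\psi)$.
\item Theorem \ref{thm:open-descendant} is applied only at the end.
\end{itemize}

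\textbf{Comparison.} The paper's argument is shorter because the $R$-matrix bookkeeping is outsourced to the closed theorem, and $S$-matrix unitarity does the collapsing. Yours is self-contained at the level of graph weights. It uses only $R=\check R$, $h^\alpha_1/\sqrt2=\Delta^\alpha(\bq)^{-1/2}$ and the open and closed leaf identities, and it runs exactly parallel to the proof of Theorem \ref{thm:main}.

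One minor point: interchanging the integrals and $\fh^\bullet$ with the graph sum is simpler than you indicate. The constraint $\sum_{h\in H(v)}k(h)=3g(v)-3+\val(v)$, together with $k\geq 2$ on dilaton leaves, bounds the number of dilaton leaves, hence of vertices and edges, and all heights. So only finitely many labeled graphs contribute.
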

\begin{proof}   
\[\begin{split}
& \dint\dots\dint_{y_{i}\in\mathrm{SYZ}(\cL_{i})} \exp(\dsum_{i=1}^{n_{1}}\frac{W_{T}(y_{i})}{z_{i}}) W_{g,(n_{1},n_{2})} \\
= & \dint\dots\dint_{y_{i}\in\mathrm{SYZ}(\cL_{i})} \exp(\dsum_{i=1}^{n_{1}}\frac{W_{T}(y_{i})}{z_{i}}) \dsum_{\beta_i,k_{i}}\Bigg[\left<\!\left<\dprod_{i=1}^{n_{1}+n_{2}} \tau_{k_{i}}(\phi_{\beta_i}(0)) \right>\!\right>_{g,n_{1}+n_{2}}^{\CGW} \\
& \cdot \dprod_{i=1}^{n_{1}} \big( \Delta^{\beta_i}\dsum_{\alpha=0}^{m+r-1}\dsum_{k\in\mathbb{Z}_{\geq 0}} [z_i^{k_{i}-k}] S_{\ \beta_i}^{\widehat{\underline{\alpha}}} (-z_i) \dfrac{-W_k^\alpha(y_i)}{\sqrt{2}} \big)\cdot\dprod_{i=n_{1}+1}^{n_{1}+n_{2}}\txi^{\beta_{i}}_{k_{i}}(\tX_{i}) \Bigg]\\
= & \dsum_{\beta_i,k_{i}}\left<\!\left<\dprod_{i=1}^{n_{1}+n_{2}} \tau_{k_{i}}(\phi_{\beta_i}(0)) \right>\!\right>_{g,n_{1}+n_{2}}^{\CGW}  \dprod_{i=1}^{n_{1}}  \Delta^{\beta_i}(\phi_{\beta_i}(0),\kappa(\mathcal{L}_i))z_i^{-k_{i}-1}\dprod_{i=n_{1}+1}^{n_{1}+n_{2}} \txi^{\beta_{i}}_{k_{i}}(\tX_{i})\\
= & \dsum_{k_{i}}\left<\!\left< \dfrac{\kappa(\mathcal{L}_1)}{z_1-\psi_1}, \cdots,\dfrac{\kappa(\mathcal{L}_{n_{1}})}{z_{n_{1}}-\psi_{n_{1}}}  ,\dprod_{i=1}^{n_{2}}\tau_{k_{i}}\Big(\dsum_{\beta}\txi^{\beta}_{k_{i}}(\tX_{n_{1}+i})\phi_{\beta}\Big)\right>\!\right>_{g,n_{1}+n_{2}}^{\CGW}\\
= & \dsum_{\vec{\mu}\in(\bZ_{\neq0})^{n_{2}}}\left<\!\left< \dfrac{\kappa(\mathcal{L}_1)}{z_1-\psi_1}, \cdots,\dfrac{\kappa(\mathcal{L}_{n_{1}})}{z_{n_{1}}-\psi_{n_{1}}}  \right>\!\right>_{g,\vec{\mu}}^{\OGW}\dprod_{i=1}^{n_{2}}X_{i+n_{1}}^{\mu_{i}}.
\end{split}\]
The last equation holds because of Theorem \ref{thm:open-descendant}.
\end{proof}

\end{document}